\newtheorem{theorem}{Theorem}[section]
\newtheorem{corollary}{Corollary}[theorem]
\newtheorem{definition}[theorem]{Definition}
\newtheorem{lemma}[theorem]{Lemma}
\newtheorem{proposition}[theorem]{Proposition}
\newtheorem{remark}{Remark}[theorem]
\numberwithin{equation}{section}
\numberwithin{figure}{section}
\newcommand{\bp}{\bar{\partial}}
\newcommand{\bx}{\bm{x}}
\newcommand{\by}{\bm{y}}
\newcommand{\Ca}{\mathcal{C}^n}
\newcommand{\dC}{\mathbb{C}}
\newcommand{\dN}{\mathbb{N}}
\newcommand{\dR}{\mathbb{R}}
\newcommand{\dZ}{\mathbb{Z}}
\newcommand{\hf}{\widehat{F}}
\newcommand{\hg}{\widehat{G}}
\newcommand{\hq}{\widehat{Q}}
\newcommand{\ldel}{\underline{\delta}}
\newcommand{\p}{\partial}
\newcommand{\R}{\mathbb R}
\newcommand{\ty}{\epsilon_X}
\newcommand{\tlk}{\tilde{\lambda}_k}
\newcommand{\vf}{\varphi}
\DeclareMathOperator{\dvol}{dvol}
\DeclareMathOperator{\fa}{\alpha}
\DeclareMathOperator{\fb}{\beta}
\DeclareMathOperator{\Ric}{Ric}
\DeclareMathOperator{\Ku}{\Phi^{\sharp}}
\DeclareMathOperator{\Tr}{Tr}
\DeclareMathOperator{\Tri}{\Psi^{\flat}}
\DeclareMathOperator{\Vol}{Vol}
\begin{document}

\title{A Liouville theorem on asymptotically Calabi spaces}

\author{Song Sun}

\address{Department of Mathematics, University of California, Berkeley, CA, USA, 94720} 
\email{sosun@berkeley.edu}

\author{Ruobing Zhang}
 \thanks{The first author is supported by NSF Grant DMS-1708420, an Alfred P. Sloan Fellowship, and the Simons Collaboration Grant on Special Holonomy in Geometry, Analysis and Physics ($\#$ 488633, S.S.). The second author is supported by  NSF Grant DMS-1906265. }

\address{Department of Mathematics, Princeton University, Princeton, NJ, USA, 08544}
\email{ruobingz@princeton.edu}

\begin{abstract}

In this paper, 
we will study harmonic functions on 
the complete and incomplete spaces with nonnegative Ricci curvature which exhibit inhomogeneous collapsing behaviors at infinity.
The main result states that any nonconstant harmonic function on such spaces yields a definite  exponential growth rate which depends explicitly on the geometric data at infinity.

\end{abstract}

\maketitle

\tableofcontents

\section{Introduction}
Our main goal in this paper is to prove Liouville type theorems for harmonic functions on a class of  non-compact Riemannian manifolds exhibiting inhomogeneous collapsing behaviors at infinity, in both of the complete and incomplete settings. These results provide crucial technical tools to  the weighted analysis in \cite{SZ}.

\subsection{Background}

To begin with, we will briefly introduce the motivation of studying the harmonic functions on such spaces. In the authors' recent program of studying the complex structure degenerations and  collapsing of Calabi-Yau manifolds (see \cite{SZ}), the main ingredient is to construct the collapsing Calabi-Yau metrics and accurately describing the singularity behaviors when the complex structures are degenerating.  To achieve this, the  weighted analysis compatible with the singularity behaviors of the degenerating family was developed in \cite{SZ}. A necessary technical step in establishing the uniform weighted estimates 
is to prove the Liouville type theorems regarding the linearized operators on numerous rescaling bubble limits. In this paper, our goal is to prove the Liouville type theorems on those most complicated rescaling bubbles appearing in the weighted analysis in \cite{SZ}. Since both complete and incomplete Calabi-Yau manifolds were studied in \cite{SZ}, in addition to prove the Liouville type theorem on a complete non-compact manifold, we will also formulate the Liouville theorems employed in the weighted analysis in \cite{SZ} with the appropriate boundary conditions.

As a preliminary, let us recall the definition of a \emph{Calabi model space}, as in \cite{HSVZ}. Let $D$ be a compact complex manifold of complex dimension $n-1$ with a nowhere vanishing holomorphic volume form $\Omega_D$ and let $\omega_D$ be a Calabi-Yau metric in the K\"ahler class $2\pi c_1(L)$ for an ample line bundle $L$. We also fix a hermitian metric $h$ on $L$ with curvature form $-\sqrt{-1}\omega_D$. The Calabi model space $\Ca$ is the subset of the total space of $L$ consisting of elements $\xi$ with $0<|\xi|_h<1$, which is equipped with a holomorphic volume form $\Omega_{\Ca}$ and an incomplete Calabi-Yau metric $\omega_{\Ca}$. For our purpose in this article the holomorphic volume form $\Omega_{\Ca}$ does not play a role so we omit its formula. The K\"ahler form $\omega_{\Ca}$ is given by the {\it Calabi ansatz} and written as 
 \begin{equation}\omega_{\Ca}=\frac{n}{n+1}\sqrt{-1}\p\bp (-\log |\xi|_h^2)^{\frac{n+1}{n}}.
 \end{equation}
 The corresponding Riemannian metric $g_{\Ca}$ is Ricci flat, which is incomplete as $|\xi|_h\rightarrow 1$ and complete as $|\xi|_h\rightarrow 0$. In the complete end, the metric $g_{\Ca}$  exhibits  non-standard geometric behavior, which is described as follows. There is a natural $S^1$-action on $\Ca$ given by fiberwise rotation, and the corresponding moment map is given by
 \begin{equation}
 z=(-\log|\xi|_h^2)^{1/n}.\label{e:moment-map-definition}
 \end{equation}
 The relationship between $z$ and the distance function $r$ to a fixed point is given by 
 \begin{equation}
 C^{-1} z^{\frac{n+1}{2}}\leq r\leq C z^{\frac{n+1}{2}},  \ \ z\geq 1. 
 \end{equation}
So the model space $\Ca$ is naturally  diffeomorphic to a topological product $\R^+\times Y^{2n-1}$, where the compact fiber $Y^{2n-1}$ is a circle bundle 
\begin{equation}
S^1 \to Y^{2n-1} \to D
\end{equation}
based over $D$, and $z$ is the coordinate on $\R_+$.  As $r\rightarrow \infty$, the length of the $S^1$-orbits has size comparable with $r^{\frac{1-n}{n+1}}$ while the diameter of the base $D$ is comparable with $r^{\frac{1}{n+1}}$. In addition, as $r\to \infty$, the volume growth has the following fractional rate
\begin{equation}
\Vol_{g_{\Ca}}(B_r(p)) \sim r^{\frac{2n}{n+1}}
\end{equation}
for any fixed reference point $p$, and the tangent cone at infinity is isometric to the half line $\dR_+$.

\begin{definition}[$\delta$-aysmptotically Calabi space]\label{d:asymptotic-Calabi}
Given some constant $\delta>0$,
a complete Riemannian manifold $(X^{2n}, g)$ of dimension $2n$ is said to be $\delta$-\emph{asymptotically Calabi}
 if there exist a compact subset $K\subset X^{2n}$, a Calabi model space $(\Ca, g_{\Ca})$ and a diffeomorphism 
 \begin{equation}\Phi: \Ca \setminus K'\rightarrow X^{2n}\setminus K\end{equation} 
with $K' = \{|\xi| \geq C\} \subset \Ca$ (for some $C>0$) such that for all $k \in \dN$,
\begin{equation}|\nabla_{g_{\Ca}}^k(\Phi^*g-g_{\Ca})|_{g_{\mathcal C^n}}=O(e^{-\delta z^{\frac{n}{2}}}) \ \text{as} \ z \to +\infty.
\end{equation}
Figure \ref{f:TY} describes the asymptotic behavior of a $\delta$-asymptotically Calabi space $X^{2n}$.

\end{definition}

\begin{figure}
\begin{tikzpicture}[scale = 1.2]
\draw (0,0) to [out = 90, in = 180] (1,.5) to (2,.5);

\draw (2,.5) to [out = 0, in = 235] (4,2);
\draw (4,2) to [out = 55, in = 265] (4.6,3.3);

\draw (0,0) to [out = 270, in = 180] (1,-.5) to (2,-.5);
\draw (2,-.5) to [out = 0, in = 125] (4,-2);
\draw (4,-2) to [out = -55, in = -265] (4.6,-3.3);

\draw[blue](2,0) ellipse (.2 and .5);
\draw[red](2,0) ellipse (.2 and .05);

\draw[blue] (3,0) ellipse (.1 and .82);

\draw[red](3,0) ellipse (.1 and .05);
\draw[blue](4,0) ellipse (.05 and 2);

\draw[red ](4,0) ellipse (.05 and .02);

\draw (.8,-.05) arc [ radius = .5, start angle = 45, end angle = 135];
\draw (1.2,-.05) arc [ radius = .5, start angle = 45, end angle = 135];
\draw (1.6,-.05) arc [ radius = .5, start angle = 45, end angle = 135];

\node at (0,-1) {$X^{2n}$};

\node at (4.5,0) {$\Ca$};

\end{tikzpicture}
\caption{The Calabi model $\Ca$ appears near infinity of $X^{2n}$: the red circles are the $S^1$-fibers, while the blue curves represent the divisor $D$.} \label{f:TY}
\end{figure}
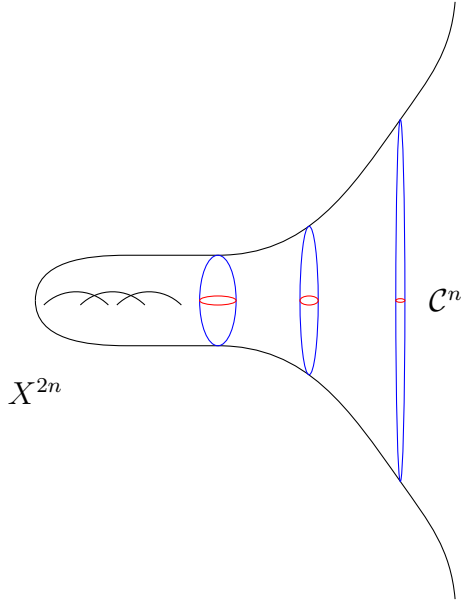

\subsection{Main results}

The first main result of the paper is the following Liouville type theorem for the harmonic functions on a complete $\delta$-asymptotically Calabi space. 
\begin{theorem}
\label{t:liouville-theorem-calabi}
Let $(X^{2n}, g)$ be a $\delta$-asymptotically Calabi space for some $\delta > 0$ and with non-negative Ricci curvature. Let $(\Ca, g_{\Ca})$
be the Calabi model space of $X^{2n}$ based over a compact Calabi-Yau manifold $(D,\omega_D)$. Denote by 
\begin{align}
\ty \equiv \min\left\{\delta, 2\cdot\Big(\frac{\lambda_D}{n}\Big)^{\frac{1}{2}}\right\},
\end{align}
where $\lambda_D>0$ is the smallest positive eigenvalue of $-\Delta_D$.
If $u$ is a harmonic function on $(X^{2n}, g)$ which satisfies the growth condition \begin{equation}|u| = O(e^{ \epsilon \cdot z^{\frac{n}{2}}}), \ z\to+\infty,\end{equation} 
for some $\epsilon\in(0,\ty)$, then $u$ is a constant.
\end{theorem}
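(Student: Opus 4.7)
The plan is to combine a mode-by-mode spectral analysis on the asymptotic Calabi end with a Cheng--Yau type Liouville theorem for sublinear harmonic functions on complete manifolds of nonnegative Ricci curvature.

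First I would pull $u$ back via $\Phi$ to the Calabi model end $\{|\xi|<C\}\subset\Ca$. The Calabi model carries an isometric $S^1$-action by fiberwise rotation (whose moment map is $z$) and a Riemannian submersion structure over $(D,\omega_D)$, so I can expand
\[ \Phi^*u\,(z,\theta,y)=\sum_{k\in\Z}\sum_{j\geq 0}a_{k,j}(z)\,\phi_j(y)\,e^{ik\theta},\]
where $\{\phi_j\}$ diagonalizes $-\Delta_D$ with eigenvalues $0=\lambda_0<\lambda_D=\lambda_1\leq\lambda_2\leq\cdots$. Because $\Phi^*g-g_{\Ca}$ and its derivatives are $O(e^{-\delta z^{n/2}})$ and $\epsilon<\delta$, harmonicity of $u$ reduces, mode by mode, to decoupled second-order ODEs
\[ a_{k,j}''(z)=\bigl(A_k\,z^{2(n-1)}+B_j\,z^{n-2}\bigr)\,a_{k,j}(z)+(\text{exponentially small error}),\]
with positive constants $A_k\propto k^2$ and $B_j\propto \lambda_j$ coming from the $S^1$ and $D$ directions respectively.

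A WKB analysis on each ODE then gives the following. For $k\neq 0$ the two independent solutions behave like $\exp(\pm c_k z^n)$, so the hypothesis $|u|=O(e^{\epsilon z^{n/2}})$ forces the growing branch to vanish and leaves a super-exponentially small remainder. For $k=0$ and $j\geq 1$ the solutions behave like $\exp(\pm 2(\lambda_j/n)^{1/2}z^{n/2})$, and the assumption $\epsilon<2(\lambda_D/n)^{1/2}$ kills the growing branch here as well. Only the mode $k=j=0$ survives: its ODE is $a_{0,0}''(z)=O(e^{-\delta z^{n/2}})$, so $a_{0,0}(z)=c_1+c_2 z+o(1)$. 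Summing the modes gives $|u|=O(z)=O(r^{2/(n+1)})$ on the end in terms of the distance function $r\sim z^{(n+1)/2}$, which is strictly sublinear. Since $(X^{2n},g)$ is complete with $\Ric\geq 0$, the classical Cheng--Yau Liouville theorem for sublinear-growth harmonic functions concludes that $u\equiv$ constant.

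The main obstacle, I expect, is making this formal separation-of-variables honest on the perturbed end. The isometric $S^1$-action and the orthogonal decomposition live on $(\Ca,g_{\Ca})$ and not on $\Phi^*g$; different modes will couple through the exponentially small metric error, and the nontrivial connection of the circle bundle $Y\to D$ produces additional cross terms. Promoting the formal expansion to genuine pointwise bounds on individual modes requires a perturbative bootstrap --- for instance a Gronwall-type estimate along each mode, or an energy estimate weighted by $e^{-\epsilon'z^{n/2}}$ with $\epsilon<\epsilon'<\ty$ --- and uses crucially that both the mode gap $2(\lambda_D/n)^{1/2}$ and the metric decay rate $\delta$ strictly exceed $\epsilon$. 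Handling the boundary contributions on the collapsing level sets $\{z=R\}$ in any integration-by-parts step is another delicate technical point that needs attention.
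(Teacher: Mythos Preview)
Your overall strategy is sound and arrives at the same conclusion, but the route differs from the paper's in two places, one of which resolves exactly the obstacle you flag.

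\textbf{Handling the metric perturbation.} You propose to expand $\Phi^*u$ directly and carry the exponentially small coupling terms through a mode-by-mode bootstrap; you correctly identify this as the hard part. The paper sidesteps this entirely. It first observes that $\Delta_{g_{\Ca}}u=\phi$ with $|\nabla^k\phi|=O(e^{(\epsilon-\delta)z^{n/2}})$ (since $\Phi^*g$ is exponentially close to $g_{\Ca}$), and then \emph{solves the Poisson equation} $\Delta_{g_{\Ca}}v=\phi$ on the model end with $|v|+|\nabla v|=O(e^{-\ell z^{n/2}})$ for any $\ell<\delta-\epsilon$. This requires the uniform (in $k$) estimates on the fundamental ODE solutions that occupy Sections~3--4, together with Liouville's formula for the inhomogeneous ODE. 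Once $v$ is in hand, $u-v$ is \emph{exactly} $g_{\Ca}$-harmonic, and your separation of variables goes through cleanly with no coupling at all: each mode satisfies the homogeneous ODE on the nose, and the growth hypothesis kills the growing branch just as you describe. This Poisson-correction step is the key device that makes your ``main obstacle'' disappear; your Gronwall/weighted-energy alternative might also work but is considerably more delicate to execute.

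\textbf{The final Liouville step.} After establishing $u=\kappa_0 z+c_0+(\text{decaying})$, you invoke Cheng--Yau for sublinear harmonic functions, using $z\sim r^{2/(n+1)}$. The paper instead shows $|du|_g\to 0$ (from the explicit decomposition and $|dz|_{g_{\Ca}}\to 0$), then applies Bochner with $\Ric_g\geq 0$ to get $\Delta_g|du|^2\geq 0$, and concludes by the maximum principle. Your route via Cheng--Yau is arguably more direct once the $O(z)$ bound is known; the paper's route gives a self-contained argument without appealing to that theorem. Both are valid.

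Minor point: the eigenfunction expansion on the cross-section $Y^{2n-1}$ is not quite a tensor product $\phi_j(y)e^{ik\theta}$, since $Y$ is a nontrivial circle bundle over $D$; the paper works with an $S^1$-homogeneous eigenbasis $\{\varphi_k\}$ of $\Delta_{h_0}$, with each $\varphi_k$ corresponding to a section of $L^{-j_k}$ and an eigenvalue $\Lambda_k$ mixing $j_k$ and $\lambda_k$. This does not affect your asymptotic dichotomy ($j_k\neq 0$ gives $e^{\pm c z^n}$, $j_k=0$ gives $e^{\pm 2\sqrt{\lambda_k/n}\,z^{n/2}}$), but the bookkeeping is slightly different from what you wrote.
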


\begin{remark}
The main application of this theorem is in \cite{SZ}, where $(X^{2n},g)$ is a complete Tian-Yau space constructed in \cite{TY}. The underlying complex manifold is the complement of a smooth anti-canonical divisor in a closed Fano manifold. It was proved in \cite{HSVZ} that a Tian-Yau space is always $\delta$-asymptotically Calabi for some $\delta>0$.
 \end{remark}

\begin{remark}The special case $n=2$ of Theorem \ref{t:liouville-theorem-calabi} was proved in \cite{HSVZ}.
\end{remark}

\begin{remark}
Although not needed in \cite{SZ}, it is interesting to see if there is a general Fredholm theory for the analysis of the Laplace  operator on such spaces. We asked similar questions in the two dimensional case in \cite{HSVZ}. 
\end{remark}

\begin{remark}
Notice that the asymptotic cone 
of a  $\delta$-asymptotically Calabi space $(X^{2n}, g)$ is a half line. We are informed by Gilles Carron that, in the case of $1$-dimensional asymptotic cone, one can make use of the tools developed in \cite{Carron} to understand the harmonic functions on such complete spaces. Specifically,
by slightly modifying the proof of theorem 2.3 in \cite{Carron}, we can conclude that if a harmonic function $u$ on $(X^{2n},g)$ satisfies $u=O(e^{\delta \cdot z^{\frac{n}{2}}})$ for some $\delta>0$, then $u$ must be a constant. It is worth mentioning that the exponential gap obtained in \cite{Carron} is implicit, while we obtain an explicit and sharp gap in Theorem \ref{t:liouville-theorem-calabi}. This explicit gap is helpful to develop the Fredholm theory on such complete spaces.	
\end{remark}

The next result gives a Liouville type theorem for the solutions of the Neumann boundary problem on the incomplete Calabi model space.

\begin{theorem}[c.f. Corollary \ref{c:Liouville-neumann}]\label{t:Liouville-Calabi-neumann}Let $(\Ca, g_{\Ca})$ be an incomplete Calabi model space  based over a compact Calabi-Yau manifold $(D,\omega_D)$ and the natural moment map coordinate $z$ as in \eqref{e:moment-map-definition}, so that $\Ca$ is diffeomorphic to a topological product $[z_0,+\infty)\times Y^{2n-1}$ for some $z_0>0$ under the moment map coordinate $z$, where $Y^{2n-1}$ is a circle bundle over $D$. 
 Let $u$ be a solution of the Neumann boundary problem \begin{align}
\begin{cases}
\Delta_{g_{\Ca}} u(\bx) = 0, & \bx\in\Ca,
\\
\frac{\p u}{\p z}(\bx) = \kappa_0, & z(\bx)= z_0,
\end{cases}
\end{align}
where $\kappa_0\in\dR$.
If $u$ satisfies the growth condition $|u(\bx)| = O(e^{\delta\cdot z(\bx)^{\frac{n}{2}}})$ for some  $0<\delta<2(\frac{\lambda_D}{n})^{\frac{1}{2}}$, 
then there is some $\ell_0\in\dR$ such that $u=\kappa_0\cdot z + \ell_0$ on $\Ca$.
In particular, $u$ is constant on $\Ca$ when $\kappa_0=0$.
\end{theorem}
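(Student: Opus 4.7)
The plan is to reduce to the homogeneous-Neumann case by subtracting off an explicit harmonic function and then diagonalize the Laplacian by separation of variables adapted to the bundle structure $S^1\to Y^{2n-1}\to D$. As a preliminary I would verify that $\Delta_{g_{\Ca}} z = 0$: for $S^1$-invariant functions depending only on $z$, the Laplacian reduces to a constant multiple of $z^{1-n}\partial_z^2$, because the coefficient $\sqrt{g}\,g^{zz}$ computed from the Calabi ansatz turns out to be independent of $z$. Consequently $v := u - \kappa_0 z$ is harmonic on $\Ca$, satisfies the homogeneous Neumann condition $\partial_z v|_{z=z_0} = 0$, and obeys the same growth bound as $u$. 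The theorem is reduced to showing that $v$ must be constant.

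Because the fiberwise $S^1$-action is isometric and commutes with $\Delta_{g_{\Ca}}$, decompose $v = \sum_{k \in \Z} v_k$ into $S^1$-isotypic components of weight $k$. Each $v_k$ is harmonic, Neumann, and obeys the same exponential growth bound. For $k\ne 0$, the very short fiber length $\sim z^{(1-n)/2}$ forces the $S^1$-derivative to contribute a dominant ``potential'' term of order $k^2 z^{n-1}$ to the spectral ODE (after further expanding $v_k$ in eigenmodes of the natural twisted Laplacian on the appropriate power of $L$ over $D$). The resulting fundamental solutions grow or decay like $e^{\pm c|k|z^n/2}$, a rate strictly stronger than the admitted $e^{\delta z^{n/2}}$; the growing branch is ruled out and only the exponentially decaying branch survives. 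This decay is strong enough, compared with the polynomial volume growth $\sim z^n$, to justify the integration by parts
\[
0 \;=\; \int_{\Ca}\bar v_k\,\Delta_{g_{\Ca}}v_k \;=\; -\int_{\Ca}|\nabla v_k|^2 \;+\; \int_{\{z=z_0\}}\bar v_k\,\partial_\nu v_k ,
\]
where the boundary term at infinity vanishes by the decay and the boundary term at $z=z_0$ vanishes by the Neumann condition (since $\partial_\nu$ is proportional to $\partial_z$). Hence $v_k\equiv 0$ for every $k\ne 0$.

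For the $S^1$-invariant piece $v_0(z,y)=\sum_{j\ge 0}a_j(z)\phi_j(y)$, decomposed along eigenfunctions $\phi_j$ of $-\Delta_{\omega_D}$ with eigenvalues $0=\lambda_0<\lambda_D=\lambda_1\le\lambda_2\le\cdots$, harmonicity of $v_0$ reduces to $a_0''=0$ and $a_j''=C_n\lambda_j z^{n-2}a_j$ for $j\ge 1$, where $C_n>0$ is an explicit constant coming from the Calabi ansatz. The Neumann datum gives $a_j'(z_0)=0$ for every $j$; in particular the $j=0$ equation together with this condition yields $a_0\equiv\text{const}$, contributing the constant $\ell_0$ in the statement. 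For $j\ge 1$, Liouville--Green/WKB analysis provides two fundamental solutions $\sim e^{\pm c_j z^{n/2}}$, and carefully matching the constants in the Calabi ansatz identifies the coefficient as $c_j = 2(\lambda_j/n)^{1/2}$. The hypothesis $\delta<2(\lambda_D/n)^{1/2}\le c_j$ then excludes the growing branch, and the decaying branch makes the same integration-by-parts argument work mode by mode to force $a_j\equiv 0$ for all $j\ge 1$. Reassembling, $v$ is constant and $u=\kappa_0 z+\ell_0$.

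The main obstacle is the sharp identification of the WKB coefficient $c_1 = 2(\lambda_D/n)^{1/2}$, matching exactly the threshold appearing in the hypothesis. The underlying ODE analysis is elementary but one must unambiguously fix several convention-dependent factors (the normalization of $\omega_D$ relative to the Riemannian metric $g_D$, the precise coefficient of $z^{n-1}$ in $g_{zz}$, and the weight of the fiber metric in $g_{\theta\theta}$) coming from the Calabi ansatz to ensure the constants come out exactly right. Secondary technical points---convergence of the Fourier/eigenfunction expansion in a weighted function space that permits termwise application of $\Delta_{g_{\Ca}}$, and vanishing of the boundary-at-infinity term in the integration by parts---are standard and follow from the substantial gap between the admitted growth $e^{\delta z^{n/2}}$ and the actual decay rate of every surviving mode.
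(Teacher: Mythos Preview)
Your proposal is correct and reaches the same conclusion, but the final step differs from the paper's argument. After the separation of variables and the elimination of the growing branch (which both you and the paper carry out in essentially the same way, the paper packaging this as Proposition~\ref{p:harmonic-function-decay}), one is left with $v=\sum_{k\geq 1} c_k\,\mathcal{D}_k(z)\varphi_k(\by)$ where each $\mathcal{D}_k$ is the decaying fundamental solution. You kill each mode by an energy identity: the decay of $\mathcal{D}_k$ makes the boundary-at-infinity term vanish, and the homogeneous Neumann condition kills the boundary term at $z=z_0$, so $\int|\nabla v_k|^2=0$. The paper instead argues pointwise: since $\mathcal{D}_k$ solves $\mathcal{D}_k''=(\text{positive})\,\mathcal{D}_k$ with $\mathcal{D}_k>0$, it is convex; combined with $\mathcal{D}_k\to 0$ this forces $\mathcal{D}_k'(z_0)<0$, so the Neumann condition $c_k\,\mathcal{D}_k'(z_0)=0$ gives $c_k=0$ directly. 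The paper's route is shorter and avoids justifying the vanishing boundary term at infinity; your energy argument is more robust and would survive in settings where one does not have such tight control on the sign of $\mathcal{D}_k'$. Your preliminary reduction (subtracting $\kappa_0 z$, using $\Delta_{g_{\Ca}}z=0$) is a clean way to arrive at the homogeneous problem; the paper instead keeps $u$ and recovers $\kappa=\kappa_0$ by integrating $\partial_z u|_{z=z_0}$ over $Y^{2n-1}$.
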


As a comparison, we obtain the following Liouville theorem under the Dirichlet boundary condition on the incomplete Calabi model space.

\begin{theorem}
[c.f. Corollary \ref{c:Liouville-dirichlet}]\label{t:Liouville-Calabi-Dirichlet} In the above notations, let $u$ be a solution of the Dirichlet boundary problem on the Calabi space $(\Ca, g_{\Ca})$ based over a compact Calabi-Yau manifold $(D,\omega_D)$,
\begin{align}
\begin{cases}
\Delta_{g_{\Ca}} u(\bx) = 0, & \bx\in\Ca,
\\
u(\bx) = 0, & z(\bx)= z_0.
\end{cases}
\end{align}
If $u$ satisfies the growth condition $|u(\bx)| = O(e^{\delta\cdot z(\bx)^{\frac{n}{2}}})$ for some  $0<\delta<2(\frac{\lambda_D}{n})^{\frac{1}{2}}$, 
then $u$ must be a linear function on $\Ca$, i.e., there exists a constant $\kappa_0\in\dR$ such that  $u = \kappa_0 \cdot (z-z_0)$ in terms of the natural moment map coordinate $z$ on $\Ca$.

\end{theorem}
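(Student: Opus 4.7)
The plan is to mirror the separation-of-variables strategy used for Theorem~\ref{t:Liouville-Calabi-neumann}, imposing the Dirichlet boundary condition in place of the Neumann condition. The Calabi metric $g_{\Ca}$ has a warped-product structure adapted to the moment map coordinate $z$, so I would begin by decomposing $u$ in a complete orthonormal system $\{\psi_{k,\ell}\}$ of joint fiber eigenfunctions on $Y^{2n-1}$, with $k\in\Z$ indexing the $S^1$-weights and $\ell$ indexing eigenvalues $\lambda_{k,\ell}$ of the corresponding (possibly twisted) Laplacian on $D$. Each Fourier coefficient $u_{k,\ell}(z)$ then obeys a decoupled second-order linear ODE on $[z_0,+\infty)$ whose coefficients are explicit in $z$, $n$, and $\lambda_{k,\ell}$.

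Next, a WKB-type asymptotic analysis at $z\to+\infty$ yields, for each mode, two independent solutions with leading exponential rates $\pm c_{k,\ell}\,z^{n/2}$, where $c_{k,\ell}\ge 0$ is determined by $\lambda_{k,\ell}$ and $n$. The smallest positive value of $c_{k,\ell}$ among nontrivial modes is $2(\lambda_D/n)^{1/2}$, realized by the $S^1$-invariant mode attached to the first positive eigenvalue $\lambda_D$ on $D$. Hence the hypothesis $|u|=O(e^{\delta z^{n/2}})$ with $\delta<2(\lambda_D/n)^{1/2}$ eliminates the growing component of every mode $(k,\ell)\neq(0,0)$, leaving a one-parameter family of decaying solutions. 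The Dirichlet condition $u_{k,\ell}(z_0)=0$ then kills this remaining parameter and forces $u_{k,\ell}\equiv 0$ for every $(k,\ell)\neq(0,0)$.

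Only the zero mode $u_{0,0}(z)$ remains, a function of $z$ alone. Since the moment map coordinate $z$ is harmonic with respect to $g_{\Ca}$, the equation $\Delta_{g_{\Ca}}u=0$ reduces on this mode to $|\nabla z|^2\,u_{0,0}''(z)=0$, so $u_{0,0}(z)=a+bz$ for some $a,b\in\R$. The boundary condition $u_{0,0}(z_0)=0$ forces $a=-bz_0$, and therefore $u(\bx)=\kappa_0(z(\bx)-z_0)$ with $\kappa_0:=b$, as required.

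The main technical obstacle is shared with Theorem~\ref{t:Liouville-Calabi-neumann}: one must justify convergence of the fiberwise eigenfunction expansion in a way compatible with the pointwise growth bound, and verify that the asymptotic analysis of the mode ODEs truly produces the sharp threshold $2(\lambda_D/n)^{1/2}$, with no smaller exponent arising from any $k\neq 0$ twisted eigenvalue. Both ingredients are developed in the preparatory work for Corollary~\ref{c:Liouville-neumann}, so the present theorem should follow by rerunning the same ODE step with the Dirichlet constraint replacing the Neumann one, as formalized in Corollary~\ref{c:Liouville-dirichlet}.
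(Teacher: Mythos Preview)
Your proposal is correct and follows essentially the same approach as the paper: apply the separation-of-variables expansion (Proposition~\ref{p:harmonic-function-decay}) to write $u=\kappa_0 z+c_0+\sum_{k\geq 1} c_k\,\mathcal{D}_k(z)\varphi_k(\by)$, then use the Dirichlet condition at $z=z_0$ together with the positivity $\mathcal{D}_k(z_0)>0$ to force $c_k=0$ for all $k\geq 1$ and $c_0=-\kappa_0 z_0$. The only minor imprecision is your justification for the zero-mode ODE: rather than invoking $|\nabla z|^2 u_{0,0}''=0$, the paper's ODE reduction \eqref{e:homogeneous-ODE} with $j_k=\lambda_k=0$ directly gives $u_{0,0}''=0$.
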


The proof of the above theorems rely on the technique of separation of variables and careful analysis on the special functions. The treatments in this paper are fairly elementary and transparent so that the general strategy are expected to be applied in more general contexts. In addition to the direct applications to the weighted and bubbling analysis in the degenerations problems (e.g. \cite{SZ} and \cite{HSVZ}), we are expecting the further developments involving the sharp Fredholm theory on the spaces with nonstandard collapsing behaviors at infinity.
An example of interesting directions is the delicate analysis of the moduli space structure based on a sharp Fredholm theory.
 Our work can be viewed as an initial step towards this direction.

The paper is organized as follows. In Section \ref{s:ode-setup}, we will recall the separation of variables in \cite{HSVZ} and write down the ODE for the Laplace equation on the Calabi model space. This ODE is not familiar at first sight, which leads us to  perform the change of variables to transform the ODE to known  ones. Depending on whether the Fourier mode with respect to the natural $S^1$-action vanishes or not, we shall get either \emph{modified Bessel equations}, or   \emph{confluent hypergeometric equations}. Solutions to these equations have known asymptotics, but for our analysis we need {\it uniform estimates}, which will be established in Section \ref{s:j=0} and \ref{s:jk not zero}. The key technical ingredients involve estimating exponential integrals using {\it Laplace's method}. To make the paper self-contained, in Appendix \ref{s:appendix-1},
 we will summarize some known facts and technical integration formulas for special functions. 
 With these preparations, in Section \ref{s:asymp harmonic Calabi model}, we  prove that a harmonic function on the Calabi model space which has slowly exponential growth at infinity must decompose as the sum of the linear function in $z$ (the moment coordinate in the Calabi model space)  and exponentially decaying terms.  We will also study the Liouville theorems for boundary problems on the incomplete Calabi space by proving Theorems \ref{t:Liouville-Calabi-neumann} and \ref{t:Liouville-Calabi-Dirichlet}. In Section \ref{s:Poisson with asymptotics}, we show that the Poisson equation on the Calabi model space can be solved using separation of variables for a function with certain growth control at infinity. Section \ref{s:proof of Liouville} is dedicated to the proof of  Theorem \ref{t:liouville-theorem-calabi}.  First transplanting the harmonic function to an approximately harmonic function on the Calabi model space, then correct this to a harmonic function by solving a Poisson equation. These imply the function $u$ grows at most linearly in $z$. The later then implies $du$ is a decaying harmonic 1-form, and must vanish by applying the Bochner technique (which uses the assumption $\Ric_g\geq 0$) and maximum principle.

Now we list some notations and make basic conventions for the convenience of later discussions in this section:
\begin{itemize}
\item The Laplace-Beltrami operator $\Delta_g$ acting on functions is given by
\begin{equation}
\Delta_g u \equiv \Tr_g(\nabla^2u).
\end{equation}
For example, $\Delta_{\dR^n}\equiv \sum\limits_{j=1}^n\frac{\partial^2}{\partial x_j^2}$.

\item Let $k\in\dZ_+$ and $x\in\dR$, we define \begin{equation}(x)_k\equiv \prod\limits_{m=1}^{k}(x+m-1)\ and \ (x)_0\equiv 1.\end{equation} 

\item Given two positive functions $f(z)$ and $g(z)$ defined on $\dR_+$, then 
\begin{enumerate}

\item We say $f(z)\sim g(z)$ if 
\begin{equation}
\lim\limits_{z\to+\infty}\frac{f(z)}{g(z)}=1.
\end{equation}
\item 

Given two $C^1$-functions $f(z)$ and $g(z)$,
then their \emph{Wronskian} is denoted by 
\begin{equation}\mathcal{W}(f,g)(z)\equiv f(z)g'(z)-f'(z)g(z).\end{equation}

\end{enumerate}
\end{itemize}

\subsection{Acknowledgments}

We thank Gilles Carron for bringing to our attention the paper \cite{Carron} after this paper was submitted  which gives another approach to understanding the harmonic functions on the spaces with $1$-dimensional asymptotic cones. We are also grateful to the anonymous referees whose suggestions substantially improved the presentation of the paper.

\section{Separation of variables and ODE reduction}

\label{s:ode-setup}

The general ideas of performing separation of variables on a complete space with certain model geometry at infinity have been extensively explored in the literature such as  asymptotically cylindrical spaces  and asymptotically conical spaces. An example of earlier works in this direction is \cite{LM}, where the elliptic operators were carefully studied in the asymptotically cylindrical cases.
It is different from the standard model geometry (e.g. a cone or a cylinder) that
 the complicated collapsing behaviors at the infinity of the Calabi model space lead to involved separation of variables.  
  In order to carry out separation of variables, we  will study the local representation of the Laplace operator $\Delta_{\Ca}$ on $\Ca$. 
The separation of variables has been developed in Section 4.1 of \cite{HSVZ}, and here we just briefly review the computations and basic estimates.

Let $\{z_i\}_{i=1}^{n-1}$ be some local holomorphic coordinates on  $D$, and fix a local holomorphic trivialization  $e_0$ of the line bundle $L$  with $|e_0|^2=e^{-\psi}$,  where $\psi:D \to \dR$ is a smooth function. So we get local holomorphic coordinates $(\underline{z},\zeta)\equiv(z_1,\ldots,z_{n-1}, w)$ on $\Ca$ by writing a point  $\xi\in \Ca$ as $\xi=w \cdot e_0(\underline{z})$. Then $|\xi|_h^2=|w|^2e^{-\psi}$, where we may assume 
\begin{align}\psi(0)=1, \ d\psi(0)=0,\ \sqrt{-1}\partial\bar\partial\psi=\omega_D.\end{align} 
Let $\pi:\Ca\rightarrow D$ be the natural bundle projection map. We denote 
\begin{equation}\varrho\equiv|\xi|_h,
\end{equation}
then  \begin{equation}w=\varrho e^{\frac{\psi}{2}+\sqrt{-1}\theta},\end{equation} where $\p_\theta$ generates the natural $S^1$-rotation on the total space of $ L$. The K\"ahler form $\omega_{\Ca}$ of the Calabi model space can be written as \begin{equation}
 \label{Calabiformula}
 \omega_{\Ca}=(-\log |\xi|^2_h)^{\frac{1}{n}}\omega_D+\frac{1}{n}(-\log |\xi|_h^2)^{\frac{1}{n}-1} \sqrt{-1}(\frac{dw}{w}-\p\psi)\wedge (\frac{d\bar w}{\bar w}-\bp\psi).
 \end{equation}

Now we fix some $r_0\in (0, 1)$, and define $(Y^{2n-1}, h_0)$ to be the level set $\{\varrho=r_0\}$ endowed with the induced Riemannian metric $h_0$, which has an explicit representation
\begin{align}
h_0=(-\log r_0^2)^{\frac{1}{n}} g_D+\frac{1}{n} (-\log r_0^2)^{\frac{1}{n}-1}(d\theta-\frac{1}{2}d^c\psi)\otimes (d\theta-\frac{1}{2}d^c\psi).\label{e:h_0}
\end{align}
 We denote by $\{\Lambda_{k}\}_{k=0}^{\infty}$ 
the spectrum of $\Delta_{h_0}$ with $\Lambda_0\equiv0$, and let $\{\vf_k\}_{k=0}^{\infty}$ be an orthonormal basis of (complex-valued) eigenfunctions which are homogeneous under the $S^1$-action and with
\begin{equation}
-\Delta_{h_0} \vf_k = \Lambda_k \cdot \vf_k.
\end{equation}
From \cite{HSVZ}, Section 4.1 we know that $\Lambda_k$ can be always represented as follows, \begin{equation} \label{e:definition of Lambdak}
\Lambda_k = \frac{\lambda_k}{z_0} + nz_0^{n-1} \cdot j_k^2
\end{equation}
such that $j_k\in\dN$ and \begin{equation}\lambda_k \geq \frac{(n-1)\cdot j_k}{2}.\label{e:lambda_k-lower-bound}\end{equation}
Notice $j_k$ and $\lambda_k$ have geometric meanings as explained in \cite{HSVZ}, Section 4.1. Namely, $\vf_k$ has weight $\pm j_k$ with respect to the $S^1$-action (notice the weight of $\bar\vf_k$ is negative the weight of $\vf_k$), and $\vf_k$ corresponds to a smooth section of $L^{-j_k}$ over $D$, which is an eigenfunction of the $\bp$-Hodge Laplacian  with eigenvalue $\lambda_k$. In particular $\lambda_0=j_0=0$ and $\vf_0$ is a constant. Moreover when $j_k=0$, $\vf_k$ corresponds to an eigenfunction on $D$ and 
\begin{equation} \label{e:definition of underline lambda}
\lambda_D\equiv \inf \{\lambda_k>0|j_k=0, k\in\dZ_+\}>0.
\end{equation}

Now we carry out separation of variables for the Laplace operator on $\Delta_{\Ca}$.
Let $u$ be a harmonic function on the model space $\Ca$, namely,
\begin{equation}
\Delta_{\Ca} u = 0.
\end{equation}
In the following, for any $\xi\in\Ca$, we will denote by $z$ the natural moment map coordinate as in \eqref{e:moment-map-definition}.  
For every fixed $z$, we can write the $L^2$-expansion along the fiber $Y^{2n-1}$,
\begin{equation}
u(z, \by) = \sum\limits_{k=1}^{\infty} u_k(z) \cdot \vf_k(\by).\label{e:sum-up-harmonic}
\end{equation} For completeness and reader's convenience, let us recall the separation of variables described in \cite{HSVZ}.  Also we notice that  
\begin{align}
\frac{\p \varrho}{\p w} & =\frac{\varrho}{2w},\quad  \frac{\p \varrho}{\p z_i}=-\frac{1}{2}\varrho\cdot\p_{z_i}\psi,
\\
\frac{\p\theta}{\p w} & =\frac{1}{2\sqrt{-1}w},\quad   \frac{\p \theta}{\p z_i}=0 , \end{align}
and 
\begin{equation}
|w|^2\frac{\p^2 u}{\p w\p \bar w} =\frac{1}{4}(\varrho^2u_{\varrho\varrho}+\varrho u_{\varrho}+u_{\theta\theta}).
\end{equation}
 Now the Laplacian at  points in the fiber $\pi^{-1}(0)$ is given by 
\begin{equation}\Delta_{\Ca}u= (-\log |\xi|^2_h)^{-\frac{1}{n}} \sum_{i=1}^{n-1}\frac{\partial^2u}{\partial z_i \partial \bar{z}_i} +n(-\log |\xi|_h^2)^{-\frac{1}{n}+1} |w|^2 \frac{\partial^2 u}{\partial w\partial \bar w}.\end{equation}
Let us consider a smooth function $\phi\in C^{\infty}(Y^{2n-1})$ with \begin{equation}\mathcal L_{\p_\theta}\phi=\sqrt{-1}\cdot j\cdot \phi\label{e:S1-action}\end{equation} for some integer $j\in\dZ$. Replacing $\phi$ by $\bar\phi$ if necessary we may assume $j\geq 0$. 
Following the same computations as in \cite{HSVZ}, 
for  a smooth function $u(\varrho, z)= f(\varrho) \phi(y)$ and re-label $u$ by $u_k$ as in \eqref{e:definition of Lambdak}, so
$u_k(z)$ satisfies the differential equation
\begin{equation}\frac{d^2 u_k(z)}{dz^2}-(\frac{j_k^2n^2}{4}\cdot z^{n}+n\lambda_k)z^{n-2}u_k(z)=0,\ z\geq 1.\label{e:homogeneous-ODE}\end{equation}
 We also consider the Poisson equation 
\begin{equation}
\Delta_{\Ca} u = v.
\end{equation}
Take the $L^2$-expansion of $v$ in the direction of the cross section $Y^{2n-1}$,
\begin{equation}
v(z,\by) = \sum\limits_{k=1}^{\infty}\xi_k(z) \cdot \vf_k(\by),\label{e:sum-up-poisson}
\end{equation}
then the same procedure of separation of variables leads to an ordinary differential equation 
\begin{equation}\frac{d^2 u_k(z)}{dz^2}-(\frac{j_k^2n^2}{4}\cdot z^{n}+n\lambda_k)z^{n-2}u_k(z)=z^{n-1}\cdot \xi_k(z), \ z\geq 1.\label{e:non-homogeneous}\end{equation}

Since we will study the solutions \eqref{e:sum-up-harmonic} 
and \eqref{e:sum-up-poisson} in terms of the fiber-wise $L^2$-expansions, so there are two fundamental ingredients to analyze: First, in order to show the $L^2$-expansions in fact converge, we need to obtain some {\it uniform estimates} for the ODE solutions 
which are independent of the subscript $k\in\dN$.
The other basic aspect is to understand the asymptotics of the linearly independent solutions $\mathcal{G}_k(z)$ and $\mathcal{D}_k(z)$ as $z\to+\infty$, which in turn gives the asymptotics of the solutions \eqref{e:sum-up-harmonic} and \eqref{e:sum-up-poisson}.

Notice that the cross section $Y^{2n-1}$ is a circle bundle over the divisor $D$,  then there are two different modes depending upon if the eigenfunctions $\varphi_k$ of $\Delta_{Y^{2n-1}}$ is $S^1$-invariant. By \eqref{e:S1-action}, the circle action 
\begin{equation}\mathcal L_{\p_\theta}\phi=\sqrt{-1}\cdot j\cdot \phi \end{equation}
is trivial if and only if $j=0$. 
More technically speaking, we will study the solutions to \eqref{e:homogeneous-ODE} and \eqref{e:non-homogeneous} in two different cases: $j_k = 0$ and $j_k \neq 0$. The first step is to understand the solutions to homogeneous equation \eqref{e:homogeneous-ODE}.
Notice that, by using the change of variables $\zeta\equiv z^n$, \eqref{e:homogeneous-ODE} will become a homogeneous equation with linear coefficients, so that we can apply the theory of special functions to obtain some effective estimates for the solutions. Now letting
 \begin{align}
 \begin{cases}
 \zeta = -\log r^2 =z^n
 \\
 w_k(\zeta) \equiv u_k(z) =  u_k(\zeta^{\frac{1}{n}}), 
 \end{cases}
 \label{e:convert-to-linear}
 \end{align}
 we have
 \begin{equation}
\zeta \cdot \frac{d^2w_k(\zeta)}{d\zeta^2} + (1-\frac{1}{n})\frac{dw_k(\zeta)}{d\zeta} - (\frac{j_k^2}{4}\cdot \zeta + \frac{\lambda_k}{n}) w_k(\zeta) = 0.\label{e:linear-coefficient-ode}
\end{equation}

In the first case  $j_k=0$, we make the transformation of the above solution $w(\zeta)$ as follows,
\begin{equation}
\begin{cases}
y=2\sqrt{\frac{\lambda}{n}}\cdot  \zeta^{\frac{1}{2}} \geq 0
\\
w_k(\zeta)= \zeta^{\frac{1}{2n}}\cdot \mathcal{B}\Big(2\sqrt{\frac{\lambda}{n}}\cdot  \zeta^{\frac{1}{2}}\Big),
\end{cases}
 \label{e:homogeneous-transformation}
\end{equation}
then the function $\mathcal{B}(y)$ satisfies the {\it modified Bessel equation}, 
\begin{equation} \label{e:modified Bessel equation}
y^2\cdot\frac{d^2 \mathcal{B}(y)}{dy^2}+y\cdot\frac{d\mathcal{B}(y)}{dy}-(y^2+\frac{1}{n^2})\cdot \mathcal{B}(y)=0.
\end{equation}
In the latter case $j_k\neq 0$, 
we make the following transformation
\begin{align}
\begin{cases}
y=-j_k \cdot \zeta \leq 0 \\
w_k(\zeta) = e^{\frac{j_k\cdot\zeta}{2}} \cdot \mathcal{J}(-j_k\cdot\zeta),
\end{cases}
\label{e:nonhomogeneous-transformation}
\end{align}
then $\mathcal{J}(y)$ satisfies the {\it confluent hypergeometric equation},
\begin{equation}
y\cdot \frac{d^2 \mathcal{J}(y)}{dy^2}+(\fa-y)\cdot \frac{d\mathcal{J}(y)}{dy}-\fb\cdot \mathcal{J}(y)=0,\label{e:confluent-hypergeometric-ode}
\end{equation}
where 
\begin{align}
\begin{cases}
\fa=1-\frac{1}{n}
\\
\fb= \frac{1}{2}(1-\frac{1}{n})-\frac{\lambda_k}{j_k\cdot n}.
\end{cases}
\end{align}
It is straightforward to see  that $\alpha\in(0,1)$ and $\beta\in(-\infty,0]$.

\begin{remark}
The above ODE transformations were first used by \cite{Koehler-Kuehnel}. 
\end{remark}

\begin{remark} The homogeneous equation \eqref{e:homogeneous-ODE} was studied by the authors in the special case  
 $n=\dim_{\dC}(\Ca)=2$. When $j_k=0$, \eqref{e:homogeneous-ODE} has standard solutions given by exponential functions. When $j_k>0$,  the transformation was chosen as 
 \begin{equation}
 \begin{cases}
 y=j_k^{\frac{1}{2}}\cdot z^{\frac{n}{2}}
 \\
 u_k(z)=e^{-\frac{j_kz^{n}}{2}}\cdot Q(j_k^{\frac{1}{2}}\cdot z^{\frac{n}{2}}).
 \label{e:H-transformation}\end{cases}
 \end{equation}
 We refer the readers to Section 4 of \cite{HSVZ} for more details.
In the special case $n=2$, $Q(y)$ is an Hermite function which satisfies the Hermite differential equation
\begin{equation}
\frac{d^2 Q(y)}{dy^2} - 2y\frac{dQ(y)}{dy} - 2(h+1) Q(y) = 0.
\end{equation}
The key tool to prove the estimates for $Q$ essentially relies on its integral representation formula. However, when $n>2$, if we perform the transformation as \eqref{e:H-transformation} then the resulting equation for $Q$ is more complicated to study. It turns out the transformation \eqref{e:nonhomogeneous-transformation} is a more suitable choice. 
\end{remark}

\section{The case of zero mode: uniform estimates and asymptotics}
\label{s:j=0}

In this subsection, we consider the case $j_k=0$ and corresponding eigenfunctions $\varphi_k$ are $S^1$-invariant on $Y^{2n-1}$. So \eqref{e:homogeneous-ODE} is reduced to the homogeneous ODE 
\begin{equation}
\frac{d^2u_k(z)}{dz^2}-n\lambda_k\cdot z^{n-2}u_k(z)=0, z\geq 1. 
\end{equation}
When $\lambda_k=0$ the equation has trivial solutions given by linear functions. In this subsection we always assume $\lambda_k\neq 0$. 
As discussed in Section \ref{s:ode-setup} under the change of variables given by \eqref{e:convert-to-linear} and \eqref{e:homogeneous-transformation}, we are led to study the modified Bessel equation.
\begin{equation}
y^2\cdot\frac{d^2 \mathcal{B}(y)}{dy^2}+y\cdot\frac{d\mathcal{B}(y)}{dy}-(y^2+\nu^2)\cdot \mathcal{B}(y)=0,\ \nu\in\dR.
\end{equation}
There are two linearly independent solutions $I_{\nu}(y)$ and $K_{\nu}(y)$
 called the {\it modified Bessel functions}, whose definition is given in Appendix \ref{s:appendix-1}. 
 These yield two linearly independent solutions to the original  equation \eqref{e:homogeneous-ODE}, given by 
\begin{equation}
\begin{cases}\mathcal{G}_k(z)\equiv z^{\frac{1}{2}} \cdot I_{\frac{1}{n}}\Big(2\sqrt{\frac{\lambda_k}{n}}\cdot z^{\frac{n}{2}}\Big),\\ \mathcal{D}_k(z)\equiv z^{\frac{1}{2}} \cdot K_{\frac{1}{n}}\Big(2\sqrt{\frac{\lambda_k}{n}}\cdot z^{\frac{n}{2}}\Big). 
\end{cases}
\label{e:fundamental-solution-bessel-type}
\end{equation}
First by the definition of $I_\nu$ and $K_\nu$ we can compute its Wronskian
\begin{proposition}\label{l:bessel-Wronskian} Let $\nu>0$ and $y>0$, then
\begin{equation}
\mathcal{W}(I_{\nu}(y), K_{\nu}(y))=-\frac{1}{y}.
\end{equation}

\end{proposition}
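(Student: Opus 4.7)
The plan is to use Abel's identity applied to the modified Bessel equation, and then fix the undetermined constant via the known small-$y$ asymptotics of $I_\nu$ and $K_\nu$ recorded in Appendix \ref{s:appendix-1}.

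First I would rewrite the modified Bessel equation \eqref{e:modified Bessel equation} in the normalized form
\begin{equation}
\mathcal{B}''(y) + \frac{1}{y}\mathcal{B}'(y) - \Bigl(1+\frac{\nu^2}{y^2}\Bigr)\mathcal{B}(y) = 0.
\end{equation}
For any two solutions $f,g$ of this equation, a direct computation gives
\begin{equation}
\frac{d}{dy}\mathcal{W}(f,g) = f g'' - f'' g = -\frac{1}{y}(fg'-f'g) = -\frac{1}{y}\mathcal{W}(f,g),
\end{equation}
since the zeroth-order terms cancel. Integrating this first-order linear ODE yields $\mathcal{W}(I_\nu,K_\nu)(y) = C/y$ for some constant $C$ independent of $y$ (this is just Abel's identity applied to the ODE with coefficient $1/y$ in front of the first derivative).

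Next I would pin down the constant $C$ by letting $y\to 0^+$. Using the series definition of $I_\nu$ and the integral (or limit) representation of $K_\nu$ recalled in Appendix \ref{s:appendix-1}, one has the leading-order asymptotics
\begin{equation}
I_\nu(y) \sim \frac{y^\nu}{2^\nu \Gamma(\nu+1)}, \qquad K_\nu(y) \sim \frac{2^{\nu-1}\Gamma(\nu)}{y^\nu},
\end{equation}
together with the term-by-term differentiated asymptotics
\begin{equation}
I_\nu'(y) \sim \frac{y^{\nu-1}}{2^\nu \Gamma(\nu)}, \qquad K_\nu'(y) \sim -\frac{2^{\nu-1}\Gamma(\nu+1)}{y^{\nu+1}}.
\end{equation}
Substituting these into $\mathcal{W}(I_\nu,K_\nu) = I_\nu K_\nu' - I_\nu' K_\nu$, the $\Gamma$-factors and the powers of $2$ cancel and one obtains
\begin{equation}
\mathcal{W}(I_\nu,K_\nu)(y) = -\frac{1}{2y} - \frac{1}{2y} + o(1/y) = -\frac{1}{y} + o(1/y)
\end{equation}
as $y\to 0^+$. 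Comparing with the earlier identity $\mathcal{W}(I_\nu,K_\nu)(y) = C/y$ forces $C=-1$, which gives the claimed formula.

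The only nontrivial ingredient is the small-$y$ asymptotics of $K_\nu$ and its derivative for $\nu>0$; these are standard and already recorded in Appendix \ref{s:appendix-1}, so no genuine obstacle is anticipated. (Alternatively one could simply differentiate the integral representations of $I_\nu$ and $K_\nu$ and compute the Wronskian directly via a gamma-function identity, but the Abel-identity route above is cleaner and isolates the role played by the asymptotics.)
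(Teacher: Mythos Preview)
Your proof is correct and follows essentially the same approach as the paper: both show that $y\cdot\mathcal{W}(I_\nu,K_\nu)$ is constant (you via Abel's identity, the paper via the equivalent manipulation of multiplying each ODE by the other solution and subtracting) and then identify the constant from the small-$y$ asymptotics of $I_\nu$ and $K_\nu$ coming from their series definitions. One minor remark: the small-$y$ asymptotics of $K_\nu$ are not literally stated in Appendix~\ref{s:appendix-1} but are derived on the spot in the paper's proof from the definition \eqref{e:def-K-function} together with the reflection formula $\Gamma(\nu)\Gamma(1-\nu)=\pi/\sin(\nu\pi)$, exactly as you indicate.
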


\begin{proof}

Since $I_{\nu}$ and $K_{\nu}$ satisfy 
\begin{align}
\frac{d}{dy}(y\cdot I_{\nu}'(y)) - (y+\frac{\nu^2}{y}) I_{\nu}(y)=0,
\\
\frac{d}{dy}(y\cdot K_{\nu}'(y)) - (y+\frac{\nu^2}{y}) K_{\nu}(y)=0.
\end{align}
This implies that
\begin{equation}
K_{\nu}(y)\cdot \frac{d}{dy}(y\cdot I_{\nu}'(y)) - I_{\nu}(y)\cdot \frac{d}{dy}(y\cdot K_{\nu}'(y)) = 0,
\end{equation}
and hence
\begin{equation}
\frac{d}{dy}\Big(y \cdot  \mathcal{W}(I_{\nu}(y),K_{\nu}(y))\Big) = 0.
\end{equation}
Therefore, $y\cdot \mathcal{W}(I_{\nu}(y),K_{\nu}(y))$ is a constant. 

Next, we will compute this constant which equals the limit of $y\cdot \mathcal{W}(I_{\nu}(y),K_{\nu}(y))$ as $y\to 0$.
By definition,
\begin{equation}
\lim\limits_{y\to 0}I_{\nu}(y)\Big/\Big(\frac{y^{\nu}}{\Gamma(\nu+1)\cdot 2^{\nu}}\Big) = 1,\ \lim\limits_{y\to 0}K_{\nu}(y)\Big/\Big(\frac{\pi}{2\sin(\nu\pi)}\cdot \frac{2^{\nu}\cdot y^{-\nu}}{\Gamma(1-\nu)}\Big) = 1.
\end{equation}
Notice that \begin{equation}\Gamma(\nu+1)\Gamma(1-\nu)=\nu\Gamma(\nu)\Gamma(1-\nu)=\frac{\nu\pi}{\sin(\nu\pi)},\end{equation} then it is straightforward that
\begin{equation}
\lim\limits_{y\to 0} y\cdot ( I_{\nu}(y)K_{\nu}'(y) - K_{\nu}(y)I_{\nu}'(y)) = -1.
\end{equation} 
This completes the proof. 
\end{proof}

\begin{corollary}\label{l:j=0-Wronskian} For any $z>0$,  we have
\begin{equation}
\mathcal{W}(\mathcal{G}_k(z),\mathcal{D}_k(z)) =  - \frac{n}{2}.
\end{equation}
\end{corollary}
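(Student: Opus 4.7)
The plan is to reduce this directly to Proposition \ref{l:bessel-Wronskian} via the change of variables that defines $\mathcal{G}_k$ and $\mathcal{D}_k$. Setting $\nu \equiv \frac{1}{n}$ and introducing $y(z) \equiv 2\sqrt{\lambda_k/n}\, z^{n/2}$, we have
\begin{equation}
\mathcal{G}_k(z) = z^{1/2} I_{\nu}(y(z)), \qquad \mathcal{D}_k(z) = z^{1/2} K_{\nu}(y(z)).
\end{equation}
So the Wronskian $\mathcal{W}(\mathcal{G}_k,\mathcal{D}_k)$ is really a Wronskian of Bessel functions after a chain rule.

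First I would differentiate each of $\mathcal{G}_k$ and $\mathcal{D}_k$ using the product and chain rules, keeping the $\frac{1}{2}z^{-1/2}$ contribution from differentiating the prefactor separate from the $z^{1/2} \cdot y'(z)$ contribution from differentiating the Bessel function. Upon forming $\mathcal{G}_k \mathcal{D}_k' - \mathcal{G}_k' \mathcal{D}_k$, the two cross terms involving $\frac{1}{2}z^{-1/2}$ cancel identically because each contains the product $z^{1/2}\cdot \frac{1}{2}z^{-1/2}\cdot I_{\nu}(y) K_{\nu}(y)$ with opposite signs. What remains is
\begin{equation}
\mathcal{W}(\mathcal{G}_k,\mathcal{D}_k)(z) = z \cdot y'(z) \cdot \bigl(I_{\nu}(y) K_{\nu}'(y) - I_{\nu}'(y) K_{\nu}(y)\bigr) = z\cdot y'(z)\cdot \mathcal{W}(I_{\nu},K_{\nu})(y(z)).
\end{equation}

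Next I would invoke Proposition \ref{l:bessel-Wronskian} to substitute $\mathcal{W}(I_{\nu},K_{\nu})(y) = -1/y$, giving $\mathcal{W}(\mathcal{G}_k,\mathcal{D}_k)(z) = -z\cdot y'(z)/y(z)$. Since $y'(z)/y(z) = \frac{n}{2z}$ by a direct logarithmic-derivative computation, the final value is $-\frac{n}{2}$, which is manifestly independent of $z$ and of $\lambda_k$. There is no real obstacle here beyond careful bookkeeping of the chain rule; the only structural observation is that the $z^{1/2}$ prefactors in $\mathcal{G}_k$ and $\mathcal{D}_k$ contribute symmetrically to the Wronskian and cancel, so the scalar factor $-\frac{n}{2}$ is entirely determined by the derivative of the inner variable $y(z)$.
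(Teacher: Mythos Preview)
Your proposal is correct and follows essentially the same route as the paper: both reduce to Proposition~\ref{l:bessel-Wronskian} via the chain rule, with your version spelling out explicitly the cancellation of the $\tfrac{1}{2}z^{-1/2}$ cross terms that the paper leaves implicit. The resulting identity $\mathcal{W}(\mathcal{G}_k,\mathcal{D}_k)(z) = -z\,y'(z)/y(z) = -\tfrac{n}{2}$ is exactly what the paper writes.
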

 
\begin{proof}
 
Applying Lemma \ref{l:bessel-Wronskian} and the chain rule,  
 \begin{equation}
\mathcal{W}(\mathcal{G}_k(z),\mathcal{D}_k(z))= - z\cdot  n(\frac{\lambda_k}{n})^{\frac{1}{2}}\cdot z^{\frac{n}{2}-1} \cdot \frac{1}{2(\frac{\lambda_k}{n})^{\frac{1}{2}}z^\frac{n}{2}} = -\frac{n}{2}.
\end{equation}
\end{proof}

By Corollary \ref{c:bessel-asymp}, we also have the asymptotics of the solutions for each \emph{fixed} $k$. 
\begin{lemma}\label{l:j=0-grow-asymp}As $z\rightarrow \infty$ we have 
\begin{align}
\mathcal{G}_{k}(z) & \sim \frac{1}{2\sqrt{\pi}\cdot (\frac{\lambda_k}{n})^{\frac{1}{4}}}\cdot\frac{e^{2\sqrt{\frac{\lambda_k}{n}}\cdot z^{\frac{n}{2}}}}{ z^{\frac{n-2}{4}}},
\\
\mathcal{D}_k(z) &\sim\frac{\sqrt{\pi}}{2 (\frac{\lambda_k}{n})^{\frac{1}{4}}}\cdot\frac{e^{-2\sqrt{\frac{\lambda_k}{n}}\cdot z^{\frac{n}{2}}}}{ z^{\frac{n-2}{4}}}.
\end{align}
\end{lemma}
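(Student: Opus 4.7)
The plan is to reduce the statement to the classical asymptotic expansions of the modified Bessel functions $I_\nu$ and $K_\nu$ at infinity, which the excerpt has already recorded in Corollary \ref{c:bessel-asymp} of the appendix. Since the functions $\mathcal{G}_k$ and $\mathcal{D}_k$ were constructed in \eqref{e:fundamental-solution-bessel-type} by composing $I_{1/n}$ and $K_{1/n}$ with the substitution $y = 2\sqrt{\lambda_k/n}\cdot z^{n/2}$ and multiplying by the weight $z^{1/2}$, the entire proof is essentially a change-of-variables computation.

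First, I would recall from the appendix the standard asymptotics
\begin{equation}
I_{\nu}(y) \sim \frac{e^{y}}{\sqrt{2\pi y}}, \qquad K_{\nu}(y) \sim \sqrt{\frac{\pi}{2y}}\, e^{-y}, \qquad y\to +\infty,
\end{equation}
valid for any fixed order $\nu$, and in particular for $\nu=1/n$. Note that this is where Corollary \ref{c:bessel-asymp} is invoked; I would not reprove it here since it is standard and is assumed in the excerpt.

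Next, I would substitute $y=2\sqrt{\lambda_k/n}\cdot z^{n/2}$ into both asymptotics. For $\mathcal{G}_k$, the denominator becomes $\sqrt{2\pi\cdot 2\sqrt{\lambda_k/n}\,z^{n/2}} = 2\sqrt{\pi}\,(\lambda_k/n)^{1/4}\,z^{n/4}$, and multiplying by the prefactor $z^{1/2}$ yields the exponent $\tfrac{1}{2}-\tfrac{n}{4}=-\tfrac{n-2}{4}$ on $z$; this gives exactly the first claimed asymptotic. The computation for $\mathcal{D}_k$ is entirely parallel, using $\sqrt{\pi/(2y)} = \tfrac{1}{2}\sqrt{\pi}\,(\lambda_k/n)^{-1/4}\,z^{-n/4}$, and again combining with the $z^{1/2}$ prefactor produces the second claimed asymptotic.

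There is no real obstacle in this proof; the only thing to be careful about is book-keeping of the constants in the substitution, in particular making sure the factor $2\sqrt{\lambda_k/n}$ inside the argument of the Bessel function contributes the $(\lambda_k/n)^{1/4}$ factor to the amplitude after taking the square root. I would emphasize that the statement here is pointwise in $k$ (it is the asymptotic as $z\to+\infty$ with $k$ fixed); the \emph{uniform-in-$k$} estimates needed later in the paper are a separate matter, to be established elsewhere in Section \ref{s:j=0} via the tools mentioned in the introduction such as Laplace's method.
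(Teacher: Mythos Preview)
Your proposal is correct and takes essentially the same approach as the paper: the paper simply says the lemma follows from Corollary \ref{c:bessel-asymp}, and your change-of-variables computation is exactly the unwinding of that statement. Your remark that the lemma is pointwise in $k$, with the uniform estimates handled separately, is also precisely how the paper proceeds.
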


In our proof of Theorem \ref{t:liouville-theorem-calabi}, we need uniform estimates (with respect to $k$ and $z$) on $\mathcal G_k$ and $\mathcal D_k$. So in the following, we will prove  uniform estimates for $I_{\nu}(y)$ and $K_{\nu}(y)$ for all $y\geq 1$. 
Notice that, in this subsection we are interested in the case $j_k=0$ which corresponds to $\nu=\frac{1}{n}$. However, the following formulae and estimates work for general $\nu\in\dR$, and we shall need the case $\nu=-\frac{1}{n}$  in Section \ref{s:jk not zero}. 
We will apply appropriate integral representations of $I_{\nu}(y)$ and $K_{\nu}(y)$ to study their upper bounds and asymptotic behaviors. The following integral formulae will play a fundamental role in our estimates: Let $y>0$, then by Lemma \ref{l:infinite-integral},  we have

\begin{equation}
I_{\nu}(y)=\frac{1}{\pi}\int_{0}^{\pi}e^{y\cos\theta} \cos(\nu\theta) d\theta - \frac{\sin(\nu\pi)}{\pi}\int_0^{\infty}e^{-y\cosh t - \nu t} dt
\end{equation}
and
\begin{equation}
K_{\nu}(y) = \int_{0}^{\infty}e^{-y\cosh t}\cosh(\nu t)dt.
\end{equation}

\begin{proposition}\label{p:bessel-functions-estimate}
The following hold
\begin{enumerate}
\item For all $\nu\in \R$, there is a constant $C(\nu)>1$ such that 
\begin{align}
C^{-1}(\nu) \cdot \frac{e^{-y}}{\sqrt{y}} \leq  K_{\nu}(y)  \leq C(\nu) \cdot \frac{e^{-y}}{\sqrt{y}}, \qquad   y\geq 1\label{e:K nu bound};
\\
I_{\nu}(y) \leq 
\begin{cases}C(\nu) \cdot \frac{e^y}{\sqrt{y}}, & y\geq 1, \\
C(\nu)\cdot  y^\nu, & 0<y\leq 1.
\end{cases}\label{e:all-y-I}
\end{align}
\item For all $\nu>-1$, we have 
\begin{align}
I_\nu(y)\geq 
\begin{cases}
C(\nu)^{-1}\cdot \frac{e^y}{\sqrt{y}}, &  y\geq 1,\\
C(\nu)^{-1} \cdot y^\nu, & 0<y\leq 1. 
\end{cases}
\end{align}
\end{enumerate}
\end{proposition}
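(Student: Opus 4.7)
The plan is to apply Laplace's method to the two integral representations
$$K_{\nu}(y) = \int_{0}^{\infty}e^{-y\cosh t}\cosh(\nu t)\,dt, \qquad I_{\nu}(y) = \frac{1}{\pi}\int_{0}^{\pi}e^{y\cos\theta}\cos(\nu\theta)\,d\theta - \frac{\sin(\nu\pi)}{\pi}\int_0^{\infty}e^{-y\cosh t - \nu t}\,dt$$
recalled just before the statement.

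For $K_\nu$ with $y\geq 1$, substitute $t = s/\sqrt{y}$; using $\cosh t \geq 1 + t^2/2$ and $\cosh(\nu s/\sqrt{y}) \leq \cosh(|\nu|s)$ (valid for $y\geq 1$), one gets
$$K_\nu(y) \leq \frac{e^{-y}}{\sqrt{y}}\int_0^{\infty} e^{-s^2/2}\cosh(|\nu|s)\,ds = C(\nu)\cdot\frac{e^{-y}}{\sqrt{y}}.$$
For the matching lower bound, restrict the integration to $t\in[0,1/\sqrt{y}]$. On this interval $t\leq 1$, so $\cosh t \leq 1+t^2 \leq 1 + 1/y$, giving $y\cosh t\leq y+1$ and hence $K_\nu(y) \geq e^{-y-1}/\sqrt{y}$.

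For the upper bound on $I_\nu$ with $y\geq 1$, estimate the two integrals separately. For the first, use Jordan's inequality in the form $1-\cos\theta \geq 2\theta^2/\pi^2$ on $[0,\pi]$, so that after $u = \theta\sqrt{y}$,
$$\Bigl|\frac{1}{\pi}\int_0^\pi e^{y\cos\theta}\cos(\nu\theta)\,d\theta\Bigr| \leq \frac{e^y}{\pi}\int_0^{\pi} e^{-2y\theta^2/\pi^2}\,d\theta \leq C\cdot\frac{e^y}{\sqrt{y}}.$$
The second integral has exactly the form of $K_\nu$ and is therefore bounded by $C(\nu)e^{-y}/\sqrt{y}$, which is dominated by $e^y/\sqrt{y}$. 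For $0<y\leq 1$, use the power series $I_\nu(y) = \sum_{k\geq 0}(y/2)^{\nu+2k}/(k!\,\Gamma(\nu+k+1))$: writing $(y/2)^{\nu+2k} = (y/2)^\nu\cdot (y/2)^{2k}$ and bounding $(y/2)^{2k}\leq (1/2)^{2k}$, the tail sums to a finite $\nu$-dependent constant (the factor $y^\nu$ absorbing any sign blowup at a negative-integer pole), giving $|I_\nu(y)|\leq C(\nu)y^\nu$.

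For the lower bound on $I_\nu$ with $\nu > -1$: on $0<y\leq 1$ the hypothesis ensures $\Gamma(\nu+k+1)>0$ for every $k\geq 0$, so the series has all non-negative terms and is bounded below by its $k=0$ term $(y/2)^\nu/\Gamma(\nu+1)$. On $y\geq 1$, combine the full asymptotic $I_\nu(y) \sim e^y/\sqrt{2\pi y}$ from Corollary~\ref{c:bessel-asymp} with the positivity of $I_\nu$ on the compact interval $[1,y_0]$: choose $y_0 = y_0(\nu)$ so that $I_\nu(y)\geq \tfrac{1}{2\sqrt{2\pi}}\cdot e^y/\sqrt{y}$ for $y\geq y_0$, and on $[1,y_0]$ observe that the continuous positive function $I_\nu(y)/(e^y/\sqrt{y})$ attains a positive minimum. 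I expect the main bookkeeping challenge to be tracking the $\nu$-dependence of the constants so that the assertion holds uniformly on each compact range of $\nu$, and verifying that the extra harmless factor $y^\nu$ in part (1) for $y\leq 1$ in fact dominates the series remainder even when $\Gamma(\nu+1)$ is negative or infinite at specific $\nu$.
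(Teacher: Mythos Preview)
Your proposal is correct and follows the same overall strategy as the paper: Laplace-type estimates on the integral representations of $K_\nu$ and $I_\nu$. Your treatment of $K_\nu$ and of the upper bound for $I_\nu$ is actually slightly cleaner than the paper's---you restrict to $[0,1/\sqrt{y}]$ rather than $[0,1]$ for the $K_\nu$ lower bound, and you use the Jordan-type inequality $1-\cos\theta\geq 2\theta^2/\pi^2$ on all of $[0,\pi]$ instead of splitting at $\pi/3$---but these are cosmetic differences.

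The one genuine divergence is the lower bound on $I_\nu$ for $y\geq 1$. The paper proceeds by a direct integral estimate: it splits $\int_0^\pi e^{y\cos\theta}\cos(\nu\theta)\,d\theta$ at $\eta_\nu=\min(\pi,\pi/(3|\nu|))$, uses $\cos(\nu\theta)\geq 1/2$ and $\cos\theta\geq 1-\theta^2/2$ on $[0,\eta_\nu]$ to extract the main contribution $C(\nu)^{-1}e^y/\sqrt{y}$, and bounds the remaining piece by $(\pi-\eta_\nu)e^{\cos(\eta_\nu)y}$, which is exponentially smaller. You instead invoke the asymptotic $I_\nu(y)\sim e^y/\sqrt{2\pi y}$ from Corollary~\ref{c:bessel-asymp} together with positivity and compactness on $[1,y_0]$. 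Your route is shorter and perfectly valid (Corollary~\ref{c:bessel-asymp} is proved in the appendix independently of this proposition, so there is no circularity), but it gives no handle on the constant; the paper's argument is more explicit and self-contained. Your closing worry about $\Gamma(\nu+1)$ at negative integers is harmless: at $\nu=-\ell\in\dZ_-$ the reciprocal $1/\Gamma(\nu+k+1)$ simply vanishes for $k<\ell$, so no blowup occurs, and in any case the bound $I_\nu(y)\leq C(\nu)y^\nu$ with $\nu<0$ is generous since $y^\nu\to\infty$ as $y\to 0$.
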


\begin{proof}

In the proof the constant $C(\nu)$ may vary from line to line. 
First we prove Item (1).
To start with, we prove the upper bound estimate for the solution
$K_{\nu}(y)$.
Notice that $\cosh(t)\geq 1+\frac{t^2}{2}$ for every $t\geq 0$, then 
\begin{eqnarray}
K_{\nu}(y)
& = &  \int_{0}^{\infty}e^{-y\cosh t}\cosh(\nu t) dt
\nonumber\\
&\leq& \int_{0}^{\infty}e^{-y(1+\frac{t^2}{2})}\cosh(\nu t) dt
\nonumber\\
&=&
\frac{e^{-y}}{2}\Big(\int_{0}^{\infty}e^{-\frac{yt^2}{2}+\nu t} dt + \int_{0}^{\infty}e^{-\frac{yt^2}{2}-\nu t} dt\Big).
\end{eqnarray}
Now we prove that, for $y\geq 1$ and $\nu\in\dR$, 
 \begin{equation}
\int_{0}^{\infty}e^{-\frac{yt^2}{2}+\nu t} dt 
\leq C(\nu)\cdot\frac{1}{\sqrt{y}}.\label{e:quadratic-exp-bound}
\end{equation}

It is by straightforward computation that  \begin{eqnarray}
\int_{0}^{\infty}e^{-\frac{yt^2}{2}+\nu t} dt 
&=& \int_{0}^{\infty}e^{-(\sqrt{\frac{y}{2}}t-\frac{\nu}{2}\sqrt{\frac{2}{y}})^2+\frac{\nu^2}{2y}}dt
\nonumber\\
&=& \sqrt{\frac{2}{y}}\cdot e^{\frac{\nu^2}{2y}}\int_{-\frac{\nu}{2}\sqrt{\frac{2}{y}}}^{\infty}e^{-\tau^2}d\tau,
\end{eqnarray}
where $\tau=\sqrt{\frac{y}{2}}t-\frac{\nu}{2}\sqrt{\frac{2}{y}}$. Notice that 
\begin{equation}
\int_{-\frac{\nu}{2}\sqrt{\frac{2}{y}}}^{\infty}e^{-\tau^2}d\tau\leq \int_{-\infty}^{\infty} e^{-\tau^2}d\tau = \sqrt{\pi}.
\end{equation}
 Moreover, the assumption $y\geq 1$ implies $e^{\frac{\nu^2}{2y}}\leq e^{\frac{\nu^2}{2}}$, so it holds  that
\begin{equation}
\int_{0}^{\infty}e^{-\frac{yt^2}{2}+\nu t} dt 
\leq C(\nu)\cdot\frac{1}{\sqrt{y}}.
\end{equation}
Similarly,  
\begin{equation}
\int_{0}^{\infty}e^{-\frac{yt^2}{2}-\nu t} dt \leq C(\nu)\cdot\frac{1}{\sqrt{y}}.\end{equation}
Therefore, we have 
\begin{equation}
 K_{\nu}(y) \leq C(\nu)\cdot\frac{e^{-y}}{\sqrt{y}},
\end{equation}
where $C(\nu)>0$ depends only on $\nu$.

Next we prove the lower bound estimate for $K_{\nu}(y)$. 
The integral representation of $K_{\nu}(y)$ can be written as follows,
\begin{align}
K_{\nu}(y) 
&=\frac{e^{-y}}{2}\Big(\int_{0}^{\infty}e^{-y(\cosh t-1)+\nu t} dt
+ \int_{0}^{\infty}e^{-y(\cosh t-1) - \nu t} dt \Big).
\end{align}
We will give lower bound estimates for the above two integrals respectively.
It is straightforward that
\begin{equation}
\int_{0}^{\infty}e^{-y(\cosh t-1)+\nu t} dt
\geq \int_0^1e^{-y(\cosh t-1)+\nu t} dt=\int_0^1e^{-\frac{y\cdot\cosh(\theta_t)t^2}{2}+\nu t}dt
\end{equation}
for some $0\leq \theta_t \leq 1$, which implies that
\begin{equation}
\int_{0}^{\infty}e^{-y(\cosh t-1)+\nu t} dt
\geq \int_0^1e^{-2yt^2 +\nu t}dt.
\end{equation}
The calculations in the last step imply that for $y\geq 1$,
\begin{equation}
\frac{C^{-1}(\nu)}{\sqrt{y}} \leq \int_0^{1} e^{-2yt^2+\nu t} \leq \frac{C(\nu)}{\sqrt{y}}.
\end{equation}
Therefore,
\begin{equation}
\int_{0}^{1}e^{-y(\cosh t-1)+\nu t} dt
\geq \frac{C^{-1}(\nu)}{\sqrt{y}}.
\end{equation}
By the same calculations, 
\begin{equation}
\int_{0}^{1}e^{-y(\cosh t-1)-\nu t} dt
\geq \frac{C^{-1}(\nu)}{\sqrt{y}}.
\end{equation}
This completes the proof of \eqref{e:K nu bound}.

To see \eqref{e:all-y-I} we first assume $y\geq 1$. We use the integral representation
\begin{equation}\label{eqn5-62}
I_{\nu}(y)=\frac{1}{\pi}\int_{0}^{\pi}e^{y\cos\theta} \cos(\nu\theta) d\theta - \frac{\sin(\nu\pi)}{\pi}\int_0^{\infty}e^{-y\cosh t - \nu t} dt.
\end{equation}
To estimate the second term, we use the integral estimate
\begin{equation}
\int_0^{\infty}e^{-y\cosh t -\nu t} dt \leq e^{-y} \int_0^{\infty}e^{-\frac{yt^2}{2}-\nu t}dt \leq C(\nu)\cdot\frac{e^{-y}}{\sqrt{y}}.
\end{equation}
Next, we estimate the first term of $I_{\nu}(y)$. Since  for every $\theta \in [0,\frac{\pi}{3}]$, 
\begin{equation}\cos\theta \leq 1 -\frac{\theta^2}{2} + \frac{\theta^4}{24}\leq 1-\frac{\theta^2}{4},\end{equation} 
then 
\begin{eqnarray}
\Big|\frac{1}{\pi}\int_{0}^{\pi}e^{y\cos\theta} \cos(\nu\theta) d\theta \Big| &\leq &\frac{1}{\pi}\int_{0}^{\frac{\pi}{3}}e^{y\cos\theta}  d\theta +  
 \frac{1}{\pi}\int_{\frac{\pi}{3}}^{\pi}e^{y\cos\theta}  d\theta 
\end{eqnarray}
Estimating the right hand side separately, we get 
\begin{eqnarray*}
\Big|\frac{1}{\pi}\int_{0}^{\pi}e^{y\cos\theta} \cos(\nu\theta) d\theta \Big| & \leq & \frac{e^y}{\pi}\int_{0}^{\frac{\pi}{3}}e^{-\frac{y\cdot\theta^2}{4}}d\theta  + \frac{2e^{\frac{y}{2}}}{3}
\leq  \frac{2e^y}{\sqrt{\pi}\cdot \sqrt{y}} + \frac{2e^{\frac{y}{2}}}{3}
\leq  \frac{10e^y}{\sqrt{y}}.
\end{eqnarray*}
Therefore,
\begin{equation}
I_{\nu}(y) \leq\frac{10e^y}{\sqrt{y}} +\frac{C(\nu)\cdot e^{-y}}{\sqrt{y }}\leq \frac{C(\nu)\cdot e^y}{\sqrt{y}}.
\end{equation}
Now we assume $y\in (0, 1]$. Since $I_\nu$ is smooth, 
we only need to analyze the behavior of $I_{\nu}(y)$ as $y\to0$. By the definition of $I_\nu(y)$ we see if $\nu\geq 0$ or $\nu$ is a negative integer, $\lim\limits_{y\rightarrow 0}I_\nu(y)=0$. For any $\nu<0$, we have
\begin{equation}
\lim\limits_{y\to0}I_{\nu}(y)\Big/\frac{(\frac{y}{2})^{\nu}}{\Gamma(\nu+1)}=1.
\end{equation}
Therefore, for any $y\in(0,1]$,
\begin{equation}
I_\nu(y)\leq C(\nu)\cdot y^{\nu}. 
\end{equation}

Now we prove Item (2). First we observe that by the definition of $I_\nu$ using power series, when $\nu\in (-1, 0)$, $I_\nu(y)$ is positive for all $y\in (0, \infty)$. So the lower bound of $I_\nu$ for $y\in (0, 1]$ follows just as before. Now we assume $y\geq 1$. 
To get the lower bound on $I_\nu$, it suffices to get the lower bound on the first term of \eqref{eqn5-62}. Suppose $\nu\neq 0$, denote $\eta_\nu=\min(\pi, \frac{\pi}{3|\nu|})$, then we divide the integral into two parts
\begin{equation}
\int_0^\pi e^{y\cos\theta}\cos(\nu\theta)d\theta=\int_0^{\eta_\nu} e^{y\cos\theta}\cos(\nu\theta)d\theta+\int_{\eta_\nu}^\pi e^{y\cos\theta} \cos(\nu\theta)d\theta.
\end{equation}
Since $\cos\theta\geq 1-\frac{\theta^2}{2}$  we get 
\begin{equation}
\int_0^{\eta_\nu} e^{y\cos\theta}\cos(\nu\theta)d\theta\geq \frac{1}{2}e^y \int_0^{\eta_\nu} e^{-\frac{\theta^2}{2}y}d\theta\geq C(\nu) \frac{e^y}{\sqrt{y}}, 
\end{equation}
 and for the second term we have
 \begin{equation}
 \Big|\int_{\eta_\nu}^\pi e^{y\cos\theta} \cos(\nu\theta)d\theta\Big|\leq \int_{\eta_\nu}^\pi e^{y\cos\theta} d\theta\leq (\pi-\eta_\nu)e^{\cos(\eta_\nu)y}. 
 \end{equation}
So we get 
\begin{equation}
I_\nu(y)\geq C^{-1}(\nu)\frac{e^y}{\sqrt{y}}. 
\end{equation}
For $\nu=0$ the argument is similar. 
 This completes the proof of Item (1).
\end{proof}

Converting the above back to $\mathcal G_k$ and $\mathcal D_k$, we obtain 

\begin{corollary} \label{p:j=0 uniform estimate for G and D} There is a dimensional constant $C(n)>1$ such that $\mathcal{D}_k$ and $\mathcal{G}_k$ yield the following estimates for all
 $z\in[ 2^{-\frac{2}{n}}\cdot n^{\frac{1}{n}}
 \cdot(\lambda_D)^{-\frac{1}{n}}, +\infty)$: 
\begin{align}
\frac{C^{-1}(n)}{\lambda_k^{\frac{1}{4}}}\cdot \frac{e^{-2\sqrt{\frac{\lambda_k}{n}}\cdot z^{\frac{n}{2}}}}{ z^{\frac{n-2}{4}}} &\leq \mathcal D_k(z)\leq \frac{C(n)}{\lambda_k^{\frac{1}{4}}}\cdot\frac{e^{-2\sqrt{\frac{\lambda_k}{n}}\cdot z^{\frac{n}{2}}}}{ z^{\frac{n-2}{4}}}, \label{e:Dk lower and upper bound}\\
\frac{C^{-1}(n)}{\lambda_k^{\frac{1}{4}}}\cdot \frac{e^{2\sqrt{\frac{\lambda_k}{n}}\cdot z^{\frac{n}{2}}}}{ z^{\frac{n-2}{4}}} & \leq \mathcal G_k(z)\leq  \frac{C(n)}{\lambda_k^{\frac{1}{4}}}\cdot\frac{e^{2\sqrt{\frac{\lambda_k}{n}}\cdot z^{\frac{n}{2}}}}{ z^{\frac{n-2}{4}}}. 
\end{align}
\end{corollary}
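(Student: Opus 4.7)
The plan is to reduce this directly to Proposition \ref{p:bessel-functions-estimate} applied with $\nu = 1/n$. Recall from \eqref{e:fundamental-solution-bessel-type} that $\mathcal{G}_k(z) = z^{1/2} I_{1/n}(y)$ and $\mathcal{D}_k(z) = z^{1/2} K_{1/n}(y)$, where $y \equiv 2\sqrt{\lambda_k/n}\, z^{n/2}$. So the whole task is to (i) verify that $y \geq 1$ for the $z$-range in the statement, (ii) invoke the uniform bounds on $I_{\nu}$ and $K_{\nu}$, and (iii) translate the resulting $y$-bounds back into $z$-bounds.

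For step (i), I would use that we are in the zero-mode regime $j_k = 0$ and $\lambda_k \neq 0$, so by the definition \eqref{e:definition of underline lambda} we have $\lambda_k \geq \lambda_D > 0$. Solving $2\sqrt{\lambda_k/n}\, z^{n/2} \geq 1$ explicitly gives the threshold $z \geq 2^{-2/n} n^{1/n} \lambda_k^{-1/n}$, and the lower bound $\lambda_k \geq \lambda_D$ immediately shows that the hypothesis $z \geq 2^{-2/n} n^{1/n} \lambda_D^{-1/n}$ in the statement is more than enough to guarantee $y \geq 1$ uniformly in $k$.

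For step (ii), since $\nu = 1/n > 0$, both parts of Proposition \ref{p:bessel-functions-estimate} apply (the lower bound on $I_\nu$ is available because $\nu > -1$), yielding
\begin{equation*}
C^{-1}(n) \cdot \frac{e^{-y}}{\sqrt{y}} \;\leq\; K_{1/n}(y) \;\leq\; C(n) \cdot \frac{e^{-y}}{\sqrt{y}}, \qquad C^{-1}(n) \cdot \frac{e^{y}}{\sqrt{y}} \;\leq\; I_{1/n}(y) \;\leq\; C(n) \cdot \frac{e^{y}}{\sqrt{y}}
\end{equation*}
for all $y \geq 1$, with constant depending only on $n$ (since $\nu = 1/n$ is determined by $n$).

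For step (iii), it suffices to compute the factor $z^{1/2}/\sqrt{y}$. A direct substitution gives
\begin{equation*}
\frac{z^{1/2}}{\sqrt{y}} \;=\; \frac{z^{1/2}}{\sqrt{2}\,(\lambda_k/n)^{1/4}\, z^{n/4}} \;=\; \frac{n^{1/4}}{\sqrt{2}\,\lambda_k^{1/4}}\cdot \frac{1}{z^{(n-2)/4}},
\end{equation*}
and the exponent $y$ on its own becomes exactly $2\sqrt{\lambda_k/n}\, z^{n/2}$. Multiplying the $y$-bounds on $I_{1/n}(y)$ and $K_{1/n}(y)$ through by $z^{1/2}$ and absorbing the $\sqrt{2}\, n^{1/4}$ factor into the dimensional constant then yields exactly \eqref{e:Dk lower and upper bound} and its companion estimate for $\mathcal{G}_k$. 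There is no real obstacle here: all the analytic content was already absorbed in Proposition \ref{p:bessel-functions-estimate}, and what remains is careful bookkeeping of the exponents and of the $k$-dependence of the constants, which enters only through the explicit factor $\lambda_k^{-1/4}$.
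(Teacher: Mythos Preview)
Your proof is correct and follows exactly the approach the paper intends: the corollary is stated immediately after Proposition \ref{p:bessel-functions-estimate} with only the remark ``Converting the above back to $\mathcal G_k$ and $\mathcal D_k$, we obtain,'' and your three steps (checking $y\geq 1$ via $\lambda_k\geq\lambda_D$, invoking the Bessel bounds with $\nu=1/n$, and unwinding the substitution) are precisely that conversion carried out in detail.
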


\section{The case of nonzero mode: uniform estimates and asymptotics}
 \label{s:jk not zero}
In this subsection, we consider the case $j_k\neq 0$ of the homogeneous equation
 \begin{equation}\frac{d^2 u_k(z)}{dz^2}-(\frac{j_k^2n^2}{4}\cdot z^{n}+n\lambda_k)z^{n-2}u_k(z)=0.\ z\geq 1,\label{e:homogeneous-generic}\end{equation}
In this case, corresponding eigenfunctions 
 $\varphi_k$ are not $S^1$-invariant on the fiber $Y^{2n-1}$.

 Under the change of variables given by \eqref{e:convert-to-linear} and \eqref{e:nonhomogeneous-transformation}, the above equation is transformed into
 the confluent hypergeometric equation,
 \begin{equation}
y\cdot \frac{d^2 \mathcal{J}(y)}{dy^2}+(\fa-y)\cdot \frac{d\mathcal{J}(y)}{dy}-\fb\cdot \mathcal{J}(y)=0,\ y< 0,\label{e:confluent-hypergeometric-equation}\end{equation}
where 
\begin{align}
\begin{cases}
\fa=1-\frac{1}{n}
\\
\fb= \frac{1}{2}(1-\frac{1}{n})-\frac{\lambda_k}{j_k\cdot n}.
\end{cases}
\end{align}
Since we have shown in Section \ref{s:ode-setup} that $\lambda_k \geq \frac{j_k(n-1)}{2}$, 
we have that 
\begin{equation}\beta \leq 0\ \text{and}\ \ \alpha-\beta\geq 1-\frac{1}{n}>0.\label{e:beta-non-positive}\end{equation} 
According to the discussion in Appendix \ref{s:appendix-1}, in our case $y<0$, the confluent hypergeometric
equation \eqref{e:confluent-hypergeometric-equation} has two linearly independent solutions
\begin{equation}
\Ku(\beta,\alpha,y)\equiv\sum\limits_{k=0}^{\infty}\frac{(\beta)_k}{(\alpha)_k}\cdot\frac{y^k}{k!}
\end{equation}
and 
\begin{equation}
\Tri(\beta,\alpha,y)\equiv \frac{e^y}{\Gamma(\fa-\fb)}\int_0^{\infty}e^{yt}t^{\fa-\fb-1}(1+t)^{\fb-1}dt.\label{e:def-Tri}
\end{equation}
By Item (3) of Lemma \ref{l:asymp-Ku-Tri}, as $y\to-\infty$, $\Tri(y)$ is a decaying solution to \eqref{e:confluent-hypergeometric-equation} for every $\alpha>\beta$, while Lemma \ref{l:general-Ku-asymp} shows that, in the case $\beta<0$, the solution $\Ku(y)$ is growing of certain polynomial rate as $y\to-\infty$.
These then yield two linearly independent solutions 
 to the homogeneous equation \eqref{e:homogeneous-generic}, \begin{equation}
 \begin{cases}
\mathcal{G}_k(z) = e^{\frac{j_k z^n}{2}} \cdot \Ku(\beta,\alpha,-j_kz^n),\\\mathcal{D}_k(z) = e^{\frac{j_k z^n}{2}} \cdot \Tri(\beta,\alpha,-j_kz^n).\label{e:generic-fundamental-solutions}
\end{cases}
\end{equation}
First we can compute the Wronskian
\begin{proposition}\label{p:generic-Wronskian}
For every $k\in\dN$, the Wronskian of $\mathcal{G}_k(z)$ and $\mathcal{D}_k(z)$ is a constant given by
\begin{equation}
\mathcal{W}(\mathcal{G}_k(z),\mathcal{D}_k(z)) = \frac{\Gamma(\alpha-1)}{\Gamma(\alpha-\beta)}\cdot j_k^{\frac{1}{n}}.
\end{equation}
\end{proposition}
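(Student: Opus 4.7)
The starting observation is that the ODE \eqref{e:homogeneous-generic} contains no first-order term in $\tfrac{d}{dz}$, so by Abel's formula $\mathcal{W}(\mathcal{G}_k(z), \mathcal{D}_k(z))$ is identically constant in $z$. I plan to compute this constant by pulling the Wronskian back to the variable $y = -j_k z^n$ in which $\Ku$ and $\Tri$ naturally live. Applying the chain and product rules to \eqref{e:generic-fundamental-solutions}, the two contributions coming from differentiating the common factor $e^{j_k z^n/2}$ cancel pairwise, leaving
\begin{equation*}
\mathcal{W}(\mathcal{G}_k, \mathcal{D}_k)(z) \;=\; -\,n j_k z^{n-1} e^{j_k z^n}\cdot \mathcal{W}\big(\Ku(\beta,\alpha,\cdot),\,\Tri(\beta,\alpha,\cdot)\big)\big(-j_k z^n\big).
\end{equation*}

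Next I apply Abel's formula to the confluent hypergeometric equation \eqref{e:confluent-hypergeometric-equation} itself. In normalized form its first-derivative coefficient is $(\alpha - y)/y = \alpha/y - 1$, so
\begin{equation*}
\mathcal{W}(\Ku,\Tri)(y) \;=\; C\cdot e^y (-y)^{-\alpha}
\end{equation*}
for a constant $C$ independent of $y$. To pin down $C$, I would let $y \to -\infty$ and invoke the asymptotics recorded in Appendix \ref{s:appendix-1}, namely
\begin{equation*}
\Ku(\beta,\alpha,y)\sim \frac{\Gamma(\alpha)}{\Gamma(\alpha-\beta)}(-y)^{-\beta},\qquad \Tri(\beta,\alpha,y)\sim e^y (-y)^{-(\alpha-\beta)},
\end{equation*}
together with the corresponding expansions for the derivatives obtained from the standard identities $(\Ku)'(y) = (\beta/\alpha)\,\Ku(\beta+1,\alpha+1,y)$ and the analogue for $\Tri$. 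A direct inspection shows that $\Ku\cdot(\Tri)'$ dominates $(\Ku)'\cdot \Tri$ by a factor of $(-y)^{-1}$, and matching the leading coefficients yields $C = \Gamma(\alpha)/\Gamma(\alpha-\beta)$.

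Substituting back and using $\alpha = 1-\tfrac{1}{n}$, the exponentials $e^{j_k z^n}\cdot e^{-j_k z^n}$ cancel and the remaining powers combine as $j_k z^{n-1}\cdot (j_k z^n)^{-\alpha} = j_k^{1-\alpha} z^{n-1-n\alpha} = j_k^{1/n}$ since the exponent of $z$ vanishes. This confirms the predicted $z$-independence and gives a provisional value $\mathcal{W}(\mathcal{G}_k,\mathcal{D}_k) = -n\cdot \Gamma(\alpha)/\Gamma(\alpha-\beta)\cdot j_k^{1/n}$. Finally, the Gamma-function identity $\Gamma(\alpha) = (\alpha-1)\Gamma(\alpha-1) = -\tfrac{1}{n}\Gamma(\alpha-1)$ rewrites $-n\Gamma(\alpha)$ as $\Gamma(\alpha-1)$, producing the claimed formula. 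The only delicate point in the whole argument is the asymptotic matching step: one must verify that the subleading corrections to $\Ku$ and $\Tri$ do not contaminate the coefficient of $e^y(-y)^{-\alpha}$ appearing in the Wronskian, and this is precisely what the refined asymptotic lemmas in the Appendix are designed to guarantee.
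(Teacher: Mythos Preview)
Your argument is correct, and it reaches the same constant as the paper, but by a genuinely different route. The paper also begins by noting that the Wronskian is constant, but then evaluates it at $z=0$: it uses Kummer's transformation (Lemma~\ref{l:kummer-transformation}) together with the relation \eqref{e:Ku-Tri} to rewrite $\Tri(\beta,\alpha,-j_kz^n)$ as a combination of two $\Ku$-functions, differentiates, and exploits that for $n\geq 2$ all terms involving $\tfrac{d}{dz}\Ku(\cdot,\cdot,-j_kz^n)$ vanish at $z=0$, leaving only the single surviving contribution $\tfrac{\Gamma(\alpha-1)}{\Gamma(\alpha-\beta)}j_k^{1/n}$. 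Your approach instead pushes the computation to $z\to\infty$ (equivalently $y\to-\infty$): Abel's formula on the confluent hypergeometric equation fixes the exact functional form $\mathcal{W}(\Ku,\Tri)(y)=Ce^y(-y)^{-\alpha}$, and the asymptotics of Lemma~\ref{l:asymp-Ku-Tri} and Lemma~\ref{l:general-Ku-asymp} (applied to the shifted-parameter derivative identities you mention) pin down $C=\Gamma(\alpha)/\Gamma(\alpha-\beta)$. The paper's calculation is purely algebraic and sidesteps any worry about subleading corrections; your method is perhaps more conceptual, reuses the asymptotic machinery already in place, and avoids the somewhat involved Kummer-transformation bookkeeping. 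Your flagged ``delicate point'' is in fact harmless here precisely because Abel's formula gives the \emph{exact} form of $\mathcal{W}(\Ku,\Tri)$, so only the leading coefficient of the asymptotics is needed and the derivative identity $\Ku'=(\beta/\alpha)\Ku(\beta+1,\alpha+1,\cdot)$ suffices to control $\Ku'\Tri=O(e^y(-y)^{-\alpha-1})$.
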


\begin{proof}
Since $\mathcal{G}_k(z)$ and $\mathcal{D}_k(z)$ solve the homogeneous equation \begin{equation}\frac{d^2 u_k(z)}{dz^2}-(\frac{j_k^2n^2}{4}\cdot z^{n}+n\lambda_k)z^{n-2}u_k(z)=0
\end{equation}
which misses the first order term.
Immediately,  for all $z\geq 0$,
\begin{equation}\frac{d}{dz}\mathcal{W}(\mathcal{G}_k(z),\mathcal{D}_k(z))=0,
\end{equation}
which implies that the Wronskian $\mathcal{W}(\mathcal{G}_k(z),\mathcal{D}_k(z))$ is a constant. So it suffices to calculate it at $z=0$.
By the definition of the Wronskian, 
\begin{align}
\mathcal{W}(\mathcal{G}_k(z),\mathcal{D}_k(z))
=& e^{j_kz^n}
\cdot \Big(\Ku(\beta, \alpha, -j_kz^n)\cdot \frac{d}{dz}\Tri(\beta,\alpha,-j_kz^n)
\nonumber\\
&-  \frac{d}{dz}\Ku(\beta,\alpha, -j_kz^n)\cdot\Tri(\beta,\alpha,-j_kz^n)\Big).\label{e:Wronskian-expression}
\end{align}
To calculate $\frac{d}{dz}\Tri(\beta,\alpha,-j_kz^n)
$, we will apply Kummer's transformation law to relate
$\Tri$ and $\Ku$, that is, 
\begin{eqnarray}
&&\Tri(\beta,\alpha,-j_kz^n)
\nonumber\\
&=&e^{-{j_k}z^n}\cdot\mathcal{U}(\alpha-\beta,\alpha,{j_k}z^n)
\nonumber\\
&=&e^{-{j_k}z^n}\cdot \Big(\frac{\Gamma(1-\alpha)}{\Gamma(1-\beta)}\cdot\Ku(\alpha-\beta,\alpha,{j_k}z^n)
+\frac{\Gamma(\alpha-1)}{\Gamma(\alpha-\beta)}\cdot ({j_k}z^n)^{1-\alpha}\Ku(1-\beta,2-\alpha,{j_k}z^n)\Big)
\nonumber\\
&=&e^{-{j_k}z^n}\cdot \Big(\frac{\Gamma(1-\alpha)}{\Gamma(1-\beta)}\cdot\Ku(\alpha-\beta,\alpha,{j_k}z^n)
+\frac{\Gamma(\alpha-1)}{\Gamma(\alpha-\beta)}\cdot {j_k}^{\frac{1}{n}} z \cdot \Ku(1-\beta,2-\alpha,{j_k}z^n)\Big)
\nonumber\\
&=&\frac{\Gamma(1-\alpha)}{\Gamma(1-\beta)}\cdot\Ku(\beta,\alpha,-{j_k}z^n)+\frac{\Gamma(\alpha-1)}{\Gamma(\alpha-\beta)}\cdot {j_k}^{\frac{1}{n}}z\cdot \Ku(1-\alpha+\beta,2-\alpha,-{j_k}z^n).
\end{eqnarray}
So it follows that
\begin{eqnarray}
&&\frac{d}{dz}\Tri(\beta,\alpha,-{j_k}z^n)\nonumber\\
&=&\frac{\Gamma(1-\alpha)}{\Gamma(1-\beta)}\cdot \frac{d}{dz}\Ku(\beta,\alpha,-{j_k}z^n) 
+ \frac{\Gamma(\alpha-1)}{\Gamma(\alpha-\beta)}\cdot {j_k}^{\frac{1}{n}}\cdot \Big( \Ku(1-\alpha+\beta,2-\alpha,-{j_k}z^n)
\nonumber\\ &+& z \cdot\frac{d}{dz}\Ku(1-\alpha+\beta,2-\alpha,-{j_k}z^n)\Big).
\end{eqnarray}
Since $n\geq 2$, it directly follows from the definition of $\Ku$ that 
\begin{align}
\frac{d}{dz}\Big|_{z=0}\Ku(\beta,\alpha,-{j_k}z^n)  = 0 ,\\ \frac{d}{dz}\Big|_{z=0}\Ku(1-\alpha+\beta,2-\alpha,-{j_k}z^n) = 0.
\end{align}
Therefore, 
\begin{eqnarray}
\frac{d}{dz}\Big|_{z=0}\Tri(\beta,\alpha,-{j_k}z^n)&=&\frac{\Gamma(\alpha-1)}{\Gamma(\alpha-\beta)}\cdot {j_k}^{\frac{1}{n}}\cdot \Ku(1-\alpha+\beta,2-\alpha,0) \label{e:diff-Ku}\nonumber\\
&=&\frac{\Gamma(\alpha-1)\cdot {j_k}^{\frac{1}{n}}}{\Gamma(\alpha-\beta)}. 
\end{eqnarray}
Now evaluate \eqref{e:Wronskian-expression} at $z=0$, we have 
\begin{equation}
\mathcal{W}(\mathcal{G}_k,\mathcal{D}_k)(z) =\mathcal{W}(\mathcal{G}_k,\mathcal{D}_k)(0)=\frac{d}{dz}\Big|_{z=0}\Tri(\beta,\alpha,-{j_k}z^n)=\frac{\Gamma(\alpha-1)\cdot {j_k}^{\frac{1}{n}}}{\Gamma(\alpha-\beta)}.
\end{equation}
\end{proof}

Applying Lemma \ref{l:asymp-Ku-Tri} and Lemma \ref{l:general-Ku-asymp}, immediately we have the following asymptotics for the solutions $\mathcal{G}_k(z)$ and $\mathcal{D}_k(z)$ for \emph{fixed} $k$.

\begin{lemma}\label{l:generic-asymp}
For each fixed $k$, as $z\to+\infty$, we have
\begin{align}
\mathcal{G}_k(z) &\sim \frac{\Gamma(\alpha)}{\Gamma(\alpha-\beta)}\cdot(j_kz^n)^{-\beta}\cdot e^{\frac{j_kz^n}{2}}, \label{l:ku generic}
\\
\mathcal{D}_k(z) & \sim(j_kz^n)^{\beta-\alpha}\cdot e^{-\frac{j_kz^n}{2}}.\label{l:tri generic}
\end{align}
\end{lemma}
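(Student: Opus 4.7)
The plan is to deduce Lemma~\ref{l:generic-asymp} by pulling back, through the change of variables $y=-j_k z^n$, the asymptotic formulas for $\Ku(\beta,\alpha,y)$ and $\Tri(\beta,\alpha,y)$ as $y\to -\infty$. Since $z\to+\infty$ corresponds to $y\to-\infty$ and we have $\beta\leq 0$ together with $\alpha-\beta>0$ by \eqref{e:beta-non-positive}, the task reduces to establishing
\begin{equation}
\Ku(\beta,\alpha,y)\sim \frac{\Gamma(\alpha)}{\Gamma(\alpha-\beta)}\cdot(-y)^{-\beta},\qquad \Tri(\beta,\alpha,y)\sim e^{y}\cdot(-y)^{\beta-\alpha},
\end{equation}
as $y\to-\infty$. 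These two asymptotics are precisely what Lemma~\ref{l:general-Ku-asymp} and Lemma~\ref{l:asymp-Ku-Tri} in Appendix~\ref{s:appendix-1} provide, so the proof is a direct application of those results followed by back-substitution.

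For transparency, here is how one would arrive at each of these two appendix asymptotics. For $\Tri$, one uses the integral representation \eqref{e:def-Tri},
\begin{equation}
\Tri(\beta,\alpha,y)=\frac{e^{y}}{\Gamma(\alpha-\beta)}\int_{0}^{\infty}e^{yt}\, t^{\alpha-\beta-1}(1+t)^{\beta-1}\, dt,
\end{equation}
and applies Laplace's method: as $y\to-\infty$ the mass of the integrand concentrates near $t=0$, where $(1+t)^{\beta-1}=1+O(t)$. The rescaling $s=-yt$ then produces the Gamma integral $\int_{0}^{\infty}e^{-s}s^{\alpha-\beta-1}ds=\Gamma(\alpha-\beta)$ to leading order, which after dividing by $\Gamma(\alpha-\beta)$ yields the stated asymptotic. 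For $\Ku$ at $y\to-\infty$, the quickest route is Kummer's transformation $\Ku(\beta,\alpha,y)=e^{y}\Ku(\alpha-\beta,\alpha,-y)$ combined with the classical asymptotic $\Ku(a,b,x)\sim \frac{\Gamma(b)}{\Gamma(a)}e^{x}x^{a-b}$ as $x\to+\infty$; the factor $e^{y}$ cancels with the $e^{-y}$ coming from the transformed function, leaving $\frac{\Gamma(\alpha)}{\Gamma(\alpha-\beta)}(-y)^{-\beta}$.

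Substituting $y=-j_k z^n$ into these two asymptotics and multiplying by the exponential prefactor $e^{j_k z^n/2}$ from \eqref{e:generic-fundamental-solutions} immediately recovers \eqref{l:ku generic} and \eqref{l:tri generic}; for $\Tri$ the extra $e^{y}=e^{-j_k z^n}$ combines with $e^{j_k z^n/2}$ to produce the claimed $e^{-j_k z^n/2}$. The only mild caveat is the boundary case $\beta=0$, where $\Ku(0,\alpha,y)\equiv 1$, while the ratio $\Gamma(\alpha)/\Gamma(\alpha-\beta)$ together with $(-y)^{-\beta}$ collapses to $1$, so the formula remains consistent. I do not anticipate any substantive obstacle: the entire argument is essentially a change of variables applied to known special-function asymptotics, with the asymptotic decay $\beta\leq 0$ and $\alpha-\beta>0$ ensuring that the appendix hypotheses are satisfied.
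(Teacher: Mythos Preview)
Your proposal is correct and follows exactly the paper's approach: the paper states this lemma as an immediate consequence of Lemma~\ref{l:asymp-Ku-Tri} and Lemma~\ref{l:general-Ku-asymp}, and you have spelled out precisely that reduction via the substitution $y=-j_k z^n$ together with the prefactor $e^{j_k z^n/2}$ from \eqref{e:generic-fundamental-solutions}. Your additional sketch of how those appendix asymptotics are obtained is accurate and more detailed than what the paper itself provides.
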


Again we need to derive  uniform estimates and asymptotic behavior for $\Ku$ and $\Tri$. The idea is to first estimate them in terms of certain integrals and then apply \emph{Laplace's method}. 
To start with, we need some preliminary calculations for $\Ku$ and $\Tri$. 

By definition, 
\begin{eqnarray}
\Tri(\fb,\fa,y) &=& \frac{e^y}{\Gamma(\alpha-\beta)}\int_0^{\infty} e^{yt+(\alpha-\beta-1)\log t + (\beta-1)\log(t+1)}dt
\nonumber\\
&=& \frac{e^y}{\Gamma(\alpha-\beta)}\int_0^{\infty} e^{yt+(\alpha-\beta-1)\log\frac{t}{t+1}}\cdot \frac{1}{(1+t)^{1+\frac{1}{n}}}dt.
\end{eqnarray}
For simplicity, 
we denote 
\begin{equation}
F(t) \equiv yt+(\fa-\fb-1)\log\frac{t}{t+1},\label{e:F-def}
\end{equation}
then
\begin{equation}
\Tri(\fb,\fa,y) = \frac{e^y}{\Gamma(\alpha-\beta)}\int_0^{\infty} e^{F(t)}\cdot \frac{1}{(1+t)^{1+\frac{1}{n}}}dt.\label{e:Tri-trans}
\end{equation}

Now we give both upper and lower bounds for $\Ku(\fb,\fa,y)$ by simpler exponential integrals. 
\begin{lemma}\label{l:Ku-both-sides-bound-integral} Let $y\leq -1$, then following holds,
\begin{equation}
C_n^{-1}\cdot \frac{ e^y(-y)^{\frac{1-2\alpha}{4}}}{\Gamma(\alpha-\beta)}\cdot\int_{\frac{1}{\sqrt{-y}}}^{\infty}e^{G(u)}du \leq \Ku(\fb,\fa,y) \leq C_n\cdot \frac{ e^y(-y)^{\frac{1-2\alpha}{4}}}{\Gamma(\alpha-\beta)}\cdot\int_0^{\infty}e^{G(u)}du,\label{e:Ku-simpler-int-bound}
\end{equation}
where \begin{equation}G(u)\equiv-u^2+2\sqrt{-y}u +(\alpha-2\beta-\frac{1}{2})\log u.\label{e:G-def}\end{equation}
\end{lemma}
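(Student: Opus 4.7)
The plan is to derive the bounds through an integral representation for $\Ku(\beta,\alpha,y)$ that is compatible with the Gaussian-type weight $e^{G(u)}$, followed by pointwise estimates on the integrand. The main input is the integral representation \eqref{e:Tri-trans} of $\Tri$ and the Kummer-transformation connection formulas used in the proof of Proposition \ref{p:generic-Wronskian}.

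First I will use Kummer's transformation to reduce the problem to controlling $M(\alpha-\beta,\alpha,-y)$ with positive argument $-y\geq 1$ and positive first parameter $\alpha-\beta$. Since the usual Beta-type representation of $M$ fails (because $\alpha-(\alpha-\beta)=\beta\leq 0$), I instead combine the connection formula
\begin{equation*}
\Tri(\beta,\alpha,y) = \tfrac{\Gamma(1-\alpha)}{\Gamma(1-\beta)}\Ku(\beta,\alpha,y) + \tfrac{\Gamma(\alpha-1)}{\Gamma(\alpha-\beta)}(-y)^{1-\alpha}\Ku(1-\alpha+\beta,2-\alpha,y)
\end{equation*}
with the corresponding Wronskian identity (which, for the Kummer ODE in the form $yu''+(\alpha-y)u'-\beta u=0$, reads $\mathcal{W}=C(-y)^{-\alpha}e^y$) to recover $\Ku(\beta,\alpha,y)$ as a first-order integral transform of $\Tri(\beta,\alpha,y)$. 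Substituting the explicit integral \eqref{e:Tri-trans} for $\Tri$ then yields an explicit integral representation for $\Ku(\beta,\alpha,y)$.

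Next I apply the saddle-point substitution $t=u^2/(-y)$, which converts the exponential $e^{yt}$ into the Gaussian $e^{-u^2}$. Using $\alpha=1-1/n$ to simplify the various powers of $(-y)$, the prefactors telescope and the integral takes the schematic form
\begin{equation*}
\text{(const)}\cdot\frac{e^y(-y)^?}{\Gamma(\alpha-\beta)}\int_0^\infty e^{-u^2}\,u^{2\alpha-2\beta-1}\,(u^2+(-y))^{\beta-1}\,du.
\end{equation*}
Completing the square gives $-u^2+2\sqrt{-y}u=-(u-\sqrt{-y})^2+(-y)$, so that $\int_0^\infty e^{G(u)}du=e^{-y}\int_0^\infty e^{-(u-\sqrt{-y})^2}u^{\alpha-2\beta-1/2}du$. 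The stated bounds then reduce to comparing $u^{2\alpha-2\beta-1}(u^2+(-y))^{\beta-1}$ pointwise with $e^{-(u-\sqrt{-y})^2+u^2}u^{\alpha-2\beta-1/2}$ up to the prefactor $(-y)^{(1-2\alpha)/4}$.

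The pointwise comparison is elementary. For the upper bound, AM-GM gives $u^2+(-y)\geq 2u\sqrt{-y}$; since $\beta-1<0$, this yields $(u^2+(-y))^{\beta-1}\leq (2u\sqrt{-y})^{\beta-1}$, and standard algebraic manipulation produces the $\int_0^\infty e^{G(u)}du$ upper bound over all of $(0,\infty)$. For the lower bound I restrict to $u\geq 1/\sqrt{-y}$, on which the matching reverse estimate $u^2+(-y)\leq 2\max(u^2,-y)$ together with the Gaussian localization near $u=\sqrt{-y}$ yields the desired inequality; the cutoff $u\geq 1/\sqrt{-y}$ is necessary because for very negative $\beta$ the factor $u^{2\alpha-2\beta-1}$ is singular as $u\to 0^+$ and the small-$u$ regime must be excised.

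The main obstacle will be the bookkeeping of the exponents of $(-y)$ that accumulate through the Kummer transformation, the change of variables, and the pointwise comparison — specifically, that the contributions $(-y)^{1-\alpha}$, $(-y)^{1/n}$, and $(-y)^{(\alpha-\beta-1/2)/2}$ recombine precisely to $(-y)^{(1-2\alpha)/4}$. A secondary issue is absorbing the companion Kummer term $\Ku(1-\alpha+\beta,2-\alpha,y)$ in the connection formula, which has the same leading order of growth as $\Ku(\beta,\alpha,y)$; the key observation is that this term contributes with sign and size compatible with both inequalities, so no delicate cancellation is required.
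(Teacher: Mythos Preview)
Your proposal has a genuine gap at its core: the integral you derive is a representation for $\Tri(\beta,\alpha,y)$, not for $\Ku(\beta,\alpha,y)$. Indeed, substituting $t=u^2/(-y)$ into the formula \eqref{e:Tri-trans} for $\Tri$ gives exactly your schematic integrand $e^{-u^2}u^{2\alpha-2\beta-1}(u^2+(-y))^{\beta-1}$. You assert that the connection formula together with the Wronskian identity lets you ``recover $\Ku$ as a first-order integral transform of $\Tri$'', but this is not carried out and is not straightforward: the connection formula involves a second Kummer function $\Ku(1-\alpha+\beta,2-\alpha,y)$ which is linearly independent of $\Ku(\beta,\alpha,y)$ and has the same leading growth, so there is no elementary way to isolate $\Ku$ from $\Tri$ alone.

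Even setting that aside, the pointwise comparison you describe cannot succeed. Your integrand is concentrated around the maximum of $e^{-u^2}$, i.e.\ near $u=0$, whereas $e^{G(u)}=e^{-(u-\sqrt{-y})^2}e^{-y}u^{\alpha-2\beta-1/2}$ is concentrated near $u=\sqrt{-y}$. At $u=\sqrt{-y}$ the former is of size $e^{y}$ (exponentially tiny) while the latter is of polynomial size in $-y$; conversely near $u=0$ the situation is reversed. So the two integrands are \emph{not} pointwise comparable up to a factor depending only on $n$, and the AM--GM step $(u^2+(-y))^{\beta-1}\leq (2u\sqrt{-y})^{\beta-1}$ does nothing to repair this --- it only shifts powers of $u$ and $(-y)$, not the location of the Gaussian peak.

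The paper's proof bypasses both problems by using a different integral representation (Lemma \ref{l:general-Ku-int}):
\[
\Ku(\beta,\alpha,y)=\frac{\Gamma(\alpha)}{\Gamma(\alpha-\beta)}e^{y}(-y)^{\frac{1-\alpha}{2}}\int_0^{\infty}e^{-t}t^{\frac{\alpha-1}{2}-\beta}I_{\alpha-1}(2\sqrt{-yt})\,dt.
\]
The crucial cross term $e^{2\sqrt{-y}u}$ in $G(u)$ arises not from a substitution but from the asymptotics $I_{\alpha-1}(x)\sim C_n x^{-1/2}e^{x}$ established in Proposition \ref{p:bessel-functions-estimate}. After bounding $I_{\alpha-1}(2\sqrt{-yt})$ above and below by $C_n^{\pm 1}(-yt)^{-1/4}e^{2\sqrt{-yt}}$ (the lower bound valid only for $2\sqrt{-yt}\geq 1$, which is the origin of the cutoff $u\geq 1/\sqrt{-y}$), the change of variable $u=\sqrt{t}$ produces exactly $e^{G(u)}$ and the power $(-y)^{(1-2\alpha)/4}$ falls out directly. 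Your approach is missing precisely this Bessel input, which is what places the Gaussian peak at $\sqrt{-y}$.
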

\begin{proof}
To prove this estimate, we need the following integral representation formula for $\Ku(\fb,\fa,y)$,
\begin{equation}
\Ku(\fb,\fa,y)=\frac{\Gamma(\fa)}{\Gamma(\fa-\fb)}\cdot e^{y}(-y)^{\frac{1-\fa}{2}}\cdot \int_0^{\infty}e^{-t}\cdot t^{\frac{\fa-1}{2}-\fb}\cdot I_{\fa-1}(2\sqrt{-yt})dt.\label{e:int-Ku}
\end{equation}
The proof is included in Lemma \ref{l:general-Ku-int} of Appendix \ref{s:appendix-1}.

The key point in the proof of \eqref{e:Ku-simpler-int-bound} is to apply the estimate of $I_{\alpha-1}$ in Proposition \ref{p:bessel-functions-estimate}. 
 By definition, $\alpha= 1-\frac{1}{n}$ and hence $\alpha-1 = -\frac{1}{n} \geq -\frac{1}{2}$.
Applying the upper bound estimate of $I_{\alpha-1}$ in \eqref{e:all-y-I} of Proposition  \ref{p:bessel-functions-estimate},
\begin{eqnarray}
I_{\alpha-1}(2\sqrt{-yt})  = I_{-\frac{1}{n}}(2\sqrt{-yt})
&\leq& C_n\cdot \max\Big\{(2\sqrt{-yt})^{-\frac{1}{n}}, (2\sqrt{-yt})^{-\frac{1}{2}}\cdot e^{2\sqrt{-yt}}\Big\}
\nonumber\\
&\leq & C_n\cdot (-yt)^{-\frac{1}{4}}\cdot e^{2\sqrt{-yt}}.
\end{eqnarray}
Substituting the above in \eqref{e:int-Ku}, 
\begin{eqnarray}
\int_0^{\infty}e^{-t}\cdot t^{\frac{\fa-1}{2}-\fb}\cdot I_{\fa-1}(2\sqrt{-yt})dt
&\leq &  C_n\cdot \int_0^{\infty}e^{-t+2\sqrt{-yt}}\cdot t^{\frac{2\alpha-3}{4}-\beta}dt
\nonumber\\
&=& C_n\cdot \int_0^{\infty}e^{-t+2\sqrt{-yt} + (\frac{2\alpha-3}{4}-\beta)\log t}dt
\nonumber\\
&=&C_n\cdot \int_0^{\infty}e^{-u^2+2\sqrt{-y}u +(\alpha-2\beta-\frac{1}{2})\log u}du.\end{eqnarray}
Therefore, 
\begin{eqnarray}
 \Ku(\fb,\fa,y)
&\leq&C_n\cdot \frac{ e^y(-y)^{\frac{1-2\alpha}{4}}}{\Gamma(\alpha-\beta)}\cdot\int_0^{\infty}e^{-u^2+2\sqrt{-y}u +(\alpha-2\beta-\frac{1}{2})\log u}du.\label{e:Ku-int-upper}
\end{eqnarray}
Next, $\Ku$ can be also bounded below in a similar way. 
In fact, we consider the integral domain $t\geq \frac{1}{-y}$ with $y\leq -1$, then
\begin{equation}
I_{\alpha-1}(2\sqrt{-yt}) \geq C_n^{-1} \cdot\frac{e^{2\sqrt{-yt}}}{(-yt)^{\frac{1}{4}}},
\end{equation}
and hence
\begin{align}
\int_0^{\infty}e^{-t}\cdot t^{\frac{\fa-1}{2}-\fb}\cdot I_{\fa-1}(2\sqrt{-yt})dt
&\geq  \int_{\frac{1}{-y}}^{\infty}e^{-t}\cdot t^{\frac{\fa-1}{2}-\fb}\cdot I_{\fa-1}(2\sqrt{-yt})dt
\nonumber\\
&\geq C_n^{-1}\cdot \int_{\frac{1}{-y}}^{\infty}
e^{-t+2\sqrt{-yt} + (\frac{2\alpha-3}{4}-\beta)\log t}dt
\nonumber\\
&= C_n^{-1} \cdot\int_{\frac{1}{\sqrt{-y}}}^{\infty}e^{-u^2+2\sqrt{-y}u +(\alpha-2\beta-\frac{1}{2})\log u}du.
\end{align}
Therefore, 
\begin{equation}
\Ku(\fb,\fa,y) \geq C_n^{-1}\cdot \frac{ e^y(-y)^{\frac{1-2\alpha}{4}}}{\Gamma(\alpha-\beta)}\cdot\int_{\frac{1}{\sqrt{-y}}}^{\infty}e^{-u^2+2\sqrt{-y}u +(\alpha-2\beta-\frac{1}{2})\log u}du.
\end{equation}
\end{proof}
Now we set up a few notations for convenience. Let
\begin{equation}Q \equiv \alpha-\beta-1\geq -\frac{1}{n},\ \gamma_n\equiv \frac{1}{2}+\frac{1}{n},\end{equation} 
and recall the notations \eqref{e:F-def} and \eqref{e:G-def},
\begin{align}
F(t)& =   yt + Q \log\frac{t}{t+1}, 
\\
G(u)& = -u^2+2(-y)^{\frac{1}{2}}\cdot u+(2Q+\gamma_n)\cdot\log u.
\end{align}
By direct calculation
 \begin{align}
 F''(t) &= Q(-\frac{1}{t^2} + \frac{1}{(t+1)^2}),
 \\
 G''(u) &= -2 -\frac{2Q+\gamma_n}{u}.
 \end{align}
 Notice that $2Q+\gamma_n\geq \frac{1}{2}-\frac{1}{n}\geq 0$.
Therefore, $G(u)$ is strictly concave in $\dR_+$, and $F$ is strictly concave in $\dR$ if $Q>0$. 

We will split our analysis in two different cases:

{\bf Case (A):} $Q\geq 1$.

{\bf Case (B):} $Q\leq 1$.

Our main focus is Case (A) which is more difficult. The upper bound estimates in Case (B) follows from elementary integral calculations (see Lemma \ref{l:bounded-Q}).

\

{\bf Case (A)}

\

 Let $t_0>0$ be the unique critical point of $F(t)$ and 
let $u_0>0$ be the unique critical point of $G(u)$, then $t_0$ and $u_0$ satisfy the equations 
\begin{align}
t_0^2 + t_0 + \frac{Q}{y} = 0, \label{e:t_0-eq}
\\ 
u_0^2 - (-y)^{\frac{1}{2}}\cdot u_0 - \frac{2Q+\gamma_n}{2} =0.
\end{align}
Immediately we have
\begin{align}
t_0 &= \frac{-1 + \sqrt{1+\frac{4Q}{-y}}}{2},\label{e:t_0}
\\
u_0 &=\frac{(-y)^{\frac{1}{2}}}{2}\cdot  \Big(1+\sqrt{1+\frac{4Q}{-y}+\frac{2\gamma_n}{-y}}\Big).\label{e:u_0}
\end{align}

Now prove the following {\it effective estimates} on $\Ku$ and $\Tri$. The difference from Lemma \ref{l:generic-asymp} is here the estimates holds uniformly for all $\beta\leq 0$ (recall $\alpha$ is the fixed number $1-\frac{1}{n}$).

\begin{proposition}\label{l:laplace-method} 
 There exists some dimensional constant $C_n>0$ such that for every $y\leq -1$, the following estimates hold:
\begin{align}
  C_n^{-1}\cdot  Q^{-\frac{1}{4}-\frac{1}{2n}}\cdot\frac{(-y)^{-1}\cdot e^{y+F(t_0)}}{\Gamma(Q+1)}&\leq \Tri(\beta,\alpha,y) \leq
 C_n \cdot Q^{\frac{1}{4}} \cdot \frac{e^{y+F(t_0)}}{\Gamma(Q+1)},  \label{e:Tri-both-sides-uniform-estimate}
 \\
 C_n^{-1}\cdot Q^{-\frac{1}{4}}\cdot \frac{ (-y)^{\frac{1-2\alpha}{4}}\cdot e^{y+G(u_0)}}{\Gamma(Q+1)} &\leq  
  \Ku(\beta,\alpha,y) \leq C_n\cdot \frac{ (-y)^{\frac{1-2\alpha}{4}}\cdot e^{y+G(u_0)}}{\Gamma(Q+1)}. \label{e:Ku-both-sides-uniform-estimate}
\end{align}

\end{proposition}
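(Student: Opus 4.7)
The plan is to apply Laplace's method to the two available integral representations: formula \eqref{e:Tri-trans} for $\Tri(\beta,\alpha,y)$, and the two-sided bounds \eqref{e:Ku-simpler-int-bound} from Lemma \ref{l:Ku-both-sides-bound-integral} for $\Ku(\beta,\alpha,y)$. Since the classical Laplace asymptotics give only pointwise-in-$\beta$ estimates, the main task is to track all constants carefully and obtain bounds that are uniform in $Q\geq 1$ and $y\leq -1$.

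First I will work out explicit expressions for the critical points and their associated second derivatives. Using the defining equation \eqref{e:t_0-eq} for $t_0$ and the analogous equation for $u_0$, a short calculation gives
\begin{equation*}
F''(t_0) \;=\; -\frac{y^2\sqrt{1+4Q/(-y)}}{Q}, \qquad G''(u_0) \;=\; -4 + \frac{2\sqrt{-y}}{u_0}.
\end{equation*}
Since \eqref{e:u_0} implies $u_0\geq \sqrt{-y}$, one has $|G''(u_0)|\geq 2$ uniformly in $Q$ and $y$, whereas $|F''(t_0)|$ is comparable to $y^2/Q$ in the regime $Q\leq -y$ and to $|y|^{3/2}/\sqrt{Q}$ in the regime $Q\geq -y$. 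This dichotomy forces a two-regime case analysis. Along the way I record that $t_0$ is comparable to $Q/|y|$ in the first regime and to $\sqrt{Q/|y|}$ in the second, and observe that $F$ is strictly concave on $(0,\infty)$ for $Q>0$ while $G$ is strictly concave on all of $(0,\infty)$ without restriction.

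For the upper bound on $\Tri$ I use the concavity inequality $F(t)\leq F(t_0) + \tfrac{1}{2}F''(t_0)(t-t_0)^2$ together with the trivial estimate $(1+t)^{-1-1/n}\leq 1$ to dominate the integrand in \eqref{e:Tri-trans} by a Gaussian, yielding $\int_0^\infty e^{F(t)}(1+t)^{-1-1/n}\,dt \leq e^{F(t_0)}\sqrt{2\pi/|F''(t_0)|}$. The residual $|y|$-dependence in $|F''(t_0)|^{-1/2}$ is then absorbed by refining with the prefactor $(1+t_0)^{-1-1/n}$ concentrated on a Gaussian window of width $|F''(t_0)|^{-1/2}$; separate computation in each of the two regimes $Q\leq -y$ and $Q\geq -y$ produces the clean bound $C_n Q^{1/4} e^{F(t_0)}$. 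The corresponding lower bound comes from restricting the integral to an interval of length $\delta \sim |F''(t_0)|^{-1/2}$ centered at $t_0$, where the cubic Taylor remainder of $F$ is bounded by a dimensional constant, and then using $e^{F(t)}(1+t)^{-1-1/n}\geq c_n(1+t_0)^{-1-1/n}e^{F(t_0)}e^{F''(t_0)(t-t_0)^2}$ before performing the explicit Gaussian integral.

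The estimates for $\Ku$ follow by the same recipe applied to \eqref{e:Ku-simpler-int-bound}, but are considerably simpler: $G$ is strictly concave on all of $(0,\infty)$ and $|G''(u_0)|\geq 2$, so the Gaussian width is $O(1)$ uniformly in the two parameters. The upper bound uses $G(u)\leq G(u_0)+\tfrac{1}{2}G''(u_0)(u-u_0)^2$ integrated over $\R$. For the lower bound, the key check is that $u_0\geq \sqrt{-y}\geq 1/\sqrt{-y}$ when $y\leq -1$, so a unit-sized neighborhood of $u_0$ lies inside the integration domain $[1/\sqrt{-y},\infty)$; restricting to this neighborhood and using a two-sided Taylor expansion gives the matching lower bound. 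The main obstacle is the upper bound on $\Tri$: the $|y|$-dependence in $|F''(t_0)|^{-1/2}$ must be cancelled by the prefactor $(1+t_0)^{-1-1/n}$, which forces the separate treatment of the two regimes and careful bookkeeping of powers of $|y|$ and $Q$, so that only the claimed $Q^{1/4}$ remains in the final estimate.
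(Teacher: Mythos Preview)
Your overall strategy (Laplace's method on the integral representations) is the same as the paper's, but there is a genuine gap in the upper bound for $\Tri$. The inequality you call the ``concavity inequality,''
\[
F(t)\;\leq\; F(t_0)+\tfrac{1}{2}F''(t_0)(t-t_0)^2,
\]
is \emph{not} a consequence of concavity and in fact fails for $t>t_0$. By Taylor's theorem $F(t)=F(t_0)+\tfrac{1}{2}F''(\theta)(t-t_0)^2$ with $\theta$ between $t$ and $t_0$; since $F'''(t)=2Q\bigl(t^{-3}-(t+1)^{-3}\bigr)>0$, the second derivative $F''$ is \emph{increasing}, so for $t>t_0$ one has $F''(\theta)>F''(t_0)$ and the inequality reverses. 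Worse, because $F''(t)\to 0$ as $t\to\infty$, there is no uniform constant $c>0$ for which $F(t)\leq F(t_0)-c(t-t_0)^2$ holds on all of $(t_0,\infty)$; the decay of $F$ at infinity is only linear ($F(t)\sim yt$), so a global Gaussian majorant simply does not exist. Your subsequent ``refinement'' with the prefactor $(1+t_0)^{-1-1/n}$ cannot repair this, since the Gaussian integral you start from is already invalid. The paper handles this by splitting $\int_0^\infty e^{F(t)}\,dt$ at $2t_0$: on $[0,2t_0]$ it uses the monotonicity of $F''$ to bound the Taylor remainder by $F''(2t_0)$ (not $F''(t_0)$), and on $[2t_0,\infty)$ it uses the linear tangent bound $F(t)\leq F(2t_0)+F'(2t_0)(t-2t_0)$ coming from concavity, which integrates explicitly.

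For $\Ku$ the same formal error appears (you again use $G''(u_0)$ in the quadratic upper bound, which fails for $u>u_0$ since $G'''>0$), but here it is harmless and easily repaired: from $G''(u)=-2-(2Q+\gamma_n)/u^2\leq -2$ for all $u>0$, one has the valid global bound $G(u)\leq G(u_0)-(u-u_0)^2$, and the rest of your argument for $\Ku$ goes through. So the $\Ku$ estimates are essentially fine after this minor correction; only the $\Tri$ upper bound needs the additional splitting-plus-linear-tail argument.
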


\begin{proof}

Our main strategy is to apply  {\it  Laplace's method}. The basic idea is that the above exponential integrals are concentrated at the critical values $t_0$ and $u_0$. 

First, we prove the uniform estimate for 
$\Tri(\fb,\fa,y)$. By \eqref{e:Tri-trans},
\begin{equation}
\Tri(\fb,\fa, y) \leq \frac{e^y}{\Gamma(\fa-\fb)}\int_0^{\infty}e^{F(t)}dt.
\end{equation}

Clearly, the upper bound of $\Tri(\fb,\fa,y)$ follows from the upper bound estimate of $\int_0^{\infty}e^{F(t)}dt$. 
Write
\begin{equation}
\int_0^{\infty} e^{F(t)} dt  = \int_{0}^{2t_0}e^{F(t)}dt + \int_{2t_0}^{\infty} e^{F(t)}dt. \label{e:F-int}
\end{equation}
We will estimate the two terms separately. 

To estimate the first term in \eqref{e:F-int}, we make a change of variable \begin{equation}t=t_0 \cdot (1+\xi),\ \xi\in(-1,1),\end{equation}
then Taylor's theorem gives that
\begin{eqnarray}
F(t) - F(t_0) &=& F(t_0(1+\xi)) - F(t_0) 
\nonumber\\
&=& F'(t_0)\cdot t_0\cdot\xi + \frac{F''(\theta)} {2}\cdot t_0^2 \cdot \xi^2
\nonumber\\
&=& \frac{F''(\theta)} {2}\cdot t_0^2 \cdot \xi^2, \label{e:taylor}
\end{eqnarray}
where $\theta$ is between $t$ and $t_0$. Now we need to estimate the quadratic error term. 
It is straightforward calculation that
\begin{align}
F'''(t) &=  2(\frac{Q}{t^3}-\frac{Q}{(t+1)^3})>0,
\end{align}
then $F''(t)$ is increasing in $t$.
Since $\theta$ is between $t_0$ and $ t\in[0, 2t_0]$, the above monotonicity of $F''$ implies $F''(\theta)\leq F''(2t_0)<0$. 
So the first term of \eqref{e:F-int} becomes
\begin{eqnarray}
\int_0^{2t_0} e^{F(t)}dt 
&=&
 e^{F(t_0)} \int_0^{2t_0} e^{F(t)-F(t_0)} dt
 \nonumber\\
&\leq & e^{F(t_0)}\cdot t_0\cdot\int_{-1}^{1} e^{\frac{F''(2t_0)}{2}\cdot t_0^2 \cdot \xi^2} d\xi 
 \end{eqnarray}
By direct computations, 
$F''(2t_0)=-\frac{(4t_0+1)}{4t_0^2(2t_0+1)^2}\cdot Q$. So we have, 
 \begin{eqnarray}
 \int_0^{2t_0} e^{F(t)}dt 
 &\leq & e^{F(t_0)}\cdot t_0\cdot\int_{-1}^1 e^{-\frac{4t_0+1}{8(2t_0+1)^2}\cdot Q\cdot\xi^2} d\xi
\nonumber\\
 &\leq & C_n \cdot \frac{t_0(2t_0+1)}{\sqrt{4t_0+1}\cdot\sqrt{Q}}\cdot e^{F(t_0)}
 \nonumber\\
 &\leq & C_n \cdot Q^{\frac{1}{4}}\cdot e^{F(t_0)}, 
\end{eqnarray}
where we used that $t_0\leq C_n\cdot Q^{1/2}$ (since $y\leq -1$ and $Q\geq 1$).

Next, we estimate the second term in \eqref{e:F-int}.
Since we have proved $F''(t)<0$, so this implies that $F'(t)$ is decreasing and hence $F'(t)\leq F'(2t_0)$ for any $t\geq 2t_0$. Now Taylor's theorem gives that
\begin{equation}
F(t) \leq F(2t_0) + F'(2t_0) \cdot (t-2t_0),
\end{equation}
which implies that
\begin{equation}\int_{2t_0}^{\infty}e^{F(t)}dt 
\leq e^{F(2t_0)}\int_{2t_0}^{\infty}e^{F'(2t_0)\cdot(t-2t_0)}dt
= \frac{e^{F(2t_0)}}{-F'(2t_0)}.\end{equation}
One can check that $F'(2t_0)=\frac{y(3t_0+1)}{2(2t_0+1)}<0$ with $0<t_0<+\infty$. Since $F'(t)<0$ for all $t>t_0$, so $F(2t_0)\leq F(t_0)$ and hence for $y\leq -1$ we have
\begin{equation}
\int_{2t_0}^{\infty} e^{F(t)}dt \leq C_n e^{F(t_0)}.
\end{equation}
Combining the above, we have
\begin{equation}
\int_0^{\infty}e^{F(t)}dt\leq C_n\cdot Q^{\frac{1}{4}}\cdot e^{F(t_0)}.
\end{equation}
Therefore,
\begin{align}
\Tri(\beta,\alpha,y) &\leq
 C_n \cdot Q^{\frac{1}{4}} \cdot \frac{e^{y+F(t_0)}}{\Gamma(\alpha-\beta)}.
 \end{align}

 The lower bound estimate for $\Tri(\fb,\fa,y)$ also follows from Laplace's method and we just sketch the computations.  
\begin{align}
\Tri(\fb, \fa , y ) 
& = \frac{e^y}{\Gamma(\fa - \fb)}\int_0^{\infty}e^{F(t)}\cdot \frac{1}{(t+1)^{1+\frac{1}{n}}}dt
\nonumber\\
& \geq  \frac{e^y}{\Gamma(\alpha-\beta)}\int_{t_0(y)}^{2t_0(y)}e^{F(t)}\cdot \frac{1}{(t+1)^{1+\frac{1}{n}}}dt
\nonumber\\
&\geq \frac{ e^y}{\Gamma(\alpha-\beta)\cdot(1+2t_0)^{1+\frac{1}{n}}}\int_{t_0(y)}^{2t_0(y)}e^{F(t)}dt.
\end{align}
By the concavity of $F(t)$ and the monotonicity of $F''(t)$ in the domain $t_0\leq t\leq 2t_0$, we have
\begin{equation}
\int_{t_0(y)}^{2t_0(y)}e^{F(t)}dt\geq e^{F(t_0)}
\int_{t_0(y)}^{2t_0(y)}e^{\frac{F''(t_0)}{2}(t-t_0)^2}dt\geq C_n\cdot e^{F(t_0)} \frac{t_0(t_0+1)}{\sqrt{2t_0+1}\cdot \sqrt{Q}} 
\end{equation}
It is elementary to see that 
\begin{equation}
C_n Q^{\frac{1}{2}}(-y)^{-1}\leq t_0\leq C_n\cdot {Q^{\frac{1}{2}}}
\end{equation}
Therefore,
\begin{equation}
\Tri(\fb,\fa,y)\geq  C_n\cdot  Q^{-\frac{1}{4}-\frac{1}{2n}}\cdot\frac{e^y\cdot (-y)^{-1}}{\Gamma(\alpha-\beta)}\cdot e^{F(t_0)}.
\end{equation}

The uniform estimate for $\Ku(\fb,\fa, y)$ stated in \eqref{e:Ku-both-sides-uniform-estimate} can be proved in the same way. One just needs to apply Laplace's method to the integral estimate formula in Lemma \ref{l:Ku-both-sides-bound-integral}.
We can eventually obtain
\begin{equation}
 C_n^{-1} \cdot Q^{-\frac{1}{4}}\cdot  e^{G(u_0)} \leq 
\int_0^{\infty}e^{G(u)}du \leq C_n \cdot e^{G(u_0)}.
\end{equation}
We omit the computations here.
\end{proof}

Converting into the variables $z$, we obtain  

\begin{corollary}
\label{c:Dk upper and lower bound j not zero case}
There exists $C_n>0$ such that for all $z\geq 1$, we have 
\begin{align}
C_n^{-1} \cdot \frac{Q^{-\frac{1}{4}-\frac{1}{2n}}}{\Gamma(Q+1)} \cdot e^{-\frac{j_k\cdot z^{n}}{2}+F(t_0(z))} \cdot (j_k z^{n})^{-1} &\leq  \mathcal{D}_k(z) \leq
 C_n \cdot \frac{Q^{\frac{1}{4}}}{\Gamma(Q+1)} \cdot e^{-\frac{j_k\cdot z^{n}}{2}+F(t_0(z))},
 \\
 C_n^{-1}\cdot Q^{-\frac{1}{4}}\cdot\frac{(j_k\cdot z^n)^{\frac{1-2\alpha}{4}}}{\Gamma(Q+1)}\cdot e^{-\frac{j_k\cdot z^n}{2}+G(u_0(z))} &\leq   \mathcal{G}_k(z)
\leq C_n\cdot \frac{(j_k\cdot z^n)^{\frac{1-2\alpha}{4}}}{\Gamma(Q+1)}\cdot e^{-\frac{j_k\cdot z^n}{2}+G(u_0(z))},\end{align}
where  $Q\equiv\alpha-\beta-1\geq 1$.

\end{corollary}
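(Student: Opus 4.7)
The corollary is essentially a direct restatement of Proposition \ref{l:laplace-method} after unwinding the change of variables \eqref{e:nonhomogeneous-transformation} that relates the ODE solutions $\mathcal{G}_k(z), \mathcal{D}_k(z)$ to the Kummer/Tricomi functions. The plan is thus purely bookkeeping: substitute $y=-j_kz^n$, combine the exponential prefactors, and verify that the hypothesis $y\leq -1$ needed to invoke Proposition \ref{l:laplace-method} is met.

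First I would recall from \eqref{e:generic-fundamental-solutions} that
\begin{equation}
\mathcal{G}_k(z)=e^{\frac{j_kz^n}{2}}\,\Ku(\beta,\alpha,-j_kz^n),\qquad
\mathcal{D}_k(z)=e^{\frac{j_kz^n}{2}}\,\Tri(\beta,\alpha,-j_kz^n).
\end{equation}
Since we are in the nonzero mode regime, $j_k\in\dZ_+$ is a positive integer, so for any $z\geq 1$ the quantity $y\equiv -j_kz^n$ satisfies $y\leq -1$, which is precisely the range covered by Proposition \ref{l:laplace-method}. Moreover $-y=j_kz^n$, and the prefactor satisfies $e^{\frac{j_kz^n}{2}}=e^{-y/2}$, so when multiplied against the factor $e^y$ appearing in the bounds of Proposition \ref{l:laplace-method} it collapses to $e^{y/2}=e^{-j_kz^n/2}$.

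For the estimates on $\mathcal{D}_k$, I would then insert the bounds \eqref{e:Tri-both-sides-uniform-estimate} for $\Tri(\beta,\alpha,y)$ directly into the above identity, replacing $(-y)^{-1}$ by $(j_kz^n)^{-1}$ in the lower bound and leaving the critical value $F(t_0)$ as $F(t_0(z))$, where $t_0(z)$ is the critical point computed from \eqref{e:t_0} with $y=-j_kz^n$. Analogously, for $\mathcal{G}_k$ one substitutes \eqref{e:Ku-both-sides-uniform-estimate}, this time converting $(-y)^{(1-2\alpha)/4}$ to $(j_kz^n)^{(1-2\alpha)/4}$ and writing the critical value as $G(u_0(z))$ with $u_0(z)$ obtained from \eqref{e:u_0}. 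The dimensional constant $C_n$ can be taken to be the same as in Proposition \ref{l:laplace-method}, possibly enlarged.

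I do not expect any real obstacle here, as the work is just formal substitution; the only thing to double-check is that the hypothesis $Q\geq 1$ of Case (A) is carried through unchanged, which it is by the very statement of the corollary. The substantive content was already proved via Laplace's method in Proposition \ref{l:laplace-method}, and the present corollary simply restates those bounds in the variables relevant to the asymptotically Calabi geometry.
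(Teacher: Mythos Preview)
Your proposal is correct and matches the paper's own treatment exactly: the paper simply prefaces the corollary with ``Converting into the variables $z$, we obtain'' and gives no further argument. Your bookkeeping of the substitution $y=-j_kz^n$, the exponential combination $e^{j_kz^n/2}\cdot e^{y}=e^{-j_kz^n/2}$, and the verification that $y\leq -1$ when $z\geq 1$ and $j_k\in\dZ_+$ is precisely what is needed.
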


The next Proposition essentially gives an estimate of the product of $\Ku$ and $\Tri$.

\begin{proposition}\label{l:product-estimate}There exists some dimensional constant $C_n>0$ such that
for any $y\leq -1$, we have 
\begin{equation}
e^{F(t_0) + G(u_0)} \leq C_n   (-y)^{\frac{\gamma_n}{2}} e^{-y}   e^{-Q}  Q^{Q+\frac{\gamma_n}{2}}.
\end{equation}
In particular we have 
\begin{equation} \label{e:sharp estimate for product}
\Tri\cdot \Ku\leq C_n\cdot \frac{\Gamma(\alpha)}{\Gamma(\alpha-\beta)^2}(-y)^{\frac{1}{n}} Q^Qe^{-Q} e^y.  
\end{equation}

\end{proposition}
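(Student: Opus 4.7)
My plan is to use the critical point equations $F'(t_0)=0$ and $G'(u_0)=0$ to collapse $F(t_0)$ and $G(u_0)$ into expressions where the dominant $e^{-y}$ and $e^{-Q}$ contributions can be isolated, and then bound the remaining polynomial factors. From $F'(t_0)=0$ one reads off $t_0(t_0+1)=Q/(-y)$, which gives $yt_0=-Q/(t_0+1)$ and $t_0/(t_0+1)=(-y)t_0^2/Q$. From $G'(u_0)=0$ one reads off $u_0^2=(-y)^{1/2}u_0+Q+\gamma_n/2$. Introducing the shorthand $\sigma=\sqrt{1+4Q/(-y)}$ and $\sigma'=\sqrt{\sigma^2+2\gamma_n/(-y)}$, so that $t_0=(\sigma-1)/2$, $t_0+1=(\sigma+1)/2$, and $u_0=(-y)^{1/2}(\sigma'+1)/2$, I can decompose
\[
F(t_0)+G(u_0) = \underbrace{yt_0+(-y)^{1/2}u_0-Q-\tfrac{\gamma_n}{2}}_{\text{exponential part}} + \underbrace{Q\log\tfrac{t_0}{t_0+1}+(2Q+\gamma_n)\log u_0}_{\text{polynomial part}}.
\]

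For the exponential part, the explicit formulas give $(-y)^{1/2}u_0-(-y)(1+t_0)=\tfrac{1}{2}(-y)(\sigma'-\sigma)$, and the elementary inequality $\sqrt{a+b}-\sqrt{a}\leq b/(2\sqrt{a})$ applied with $a=\sigma^2$ and $b=2\gamma_n/(-y)$ yields $\sigma'-\sigma\leq\gamma_n/((-y)\sigma)$, whence the exponential part is at most $-(-y)-Q+\gamma_n/2$, producing the $e^{-y}e^{-Q}$ factor. For the polynomial part, I rewrite $Q\log(t_0/(t_0+1))=Q\log(-y)+2Q\log t_0-Q\log Q$ and observe $u_0/[(-y)^{1/2}(t_0+1)]=(\sigma'+1)/(\sigma+1)$. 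The elementary bound $(-y)\sigma(\sigma+1)\geq 4Q$ combined with the same square-root inequality gives $(\sigma'+1)/(\sigma+1)\leq 1+\gamma_n/(4Q)$, hence $(2Q+\gamma_n)\log u_0\leq(Q+\gamma_n/2)\log(-y)+(2Q+\gamma_n)\log(t_0+1)+O(1)$. Using the identity $2Q\log t_0+2Q\log(t_0+1)=2Q\log(Q/(-y))$, the polynomial part collapses to $(\gamma_n/2)\log(-y)+Q\log Q+\gamma_n\log(t_0+1)+O(1)$, and finally $\gamma_n\log(t_0+1)\leq(\gamma_n/2)\log Q+O(1)$ follows from $t_0+1\leq 1+\sqrt{Q/(-y)}$ by splitting into the cases $Q\leq -y$ and $Q\geq -y$ (using $Q\geq 1$). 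Adding the two parts delivers the stated bound on $e^{F(t_0)+G(u_0)}$.

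For the consequence \eqref{e:sharp estimate for product}, I substitute the first inequality into the upper bounds of Proposition \ref{l:laplace-method}, namely $\Tri\leq C_n Q^{1/4}e^{y+F(t_0)}/\Gamma(\alpha-\beta)$ and $\Ku\leq C_n(-y)^{(1-2\alpha)/4}e^{y+G(u_0)}/\Gamma(\alpha-\beta)$, and use the identity $(1-2\alpha)/4+\gamma_n/2=1/n$ (which follows from $\alpha=1-1/n$ and $\gamma_n=1/2+1/n$) to see the exponent of $(-y)$ collapse to $1/n$; the factor $\Gamma(\alpha)$ in the claimed bound absorbs the dimensional constants generated by $\Gamma(Q+1)=\Gamma(\alpha-\beta)$. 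The main obstacle I anticipate is the careful bookkeeping in the polynomial estimate above, where the cancellations among $\log t_0$, $\log(t_0+1)$, and $\log u_0$ must be tracked uniformly across the regime transition at $Q\sim(-y)$; in particular the sharp ratio bound $(\sigma'+1)/(\sigma+1)\leq 1+\gamma_n/(4Q)$ is indispensable to avoid an unwanted exponential-in-$Q$ blowup arising from the large prefactor $(2Q+\gamma_n)\log u_0$.
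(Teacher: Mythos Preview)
Your argument is correct and is essentially equivalent to the paper's own proof, only organized additively in log-space rather than multiplicatively. The paper isolates the same exponential piece $yt_0+(-y)^{1/2}u_0\leq (-y)+\gamma_n/2$, and then for the polynomial factor rewrites $\big(\tfrac{t_0}{t_0+1}\big)^Q u_0^{2Q+\gamma_n}=u_0^{\gamma_n}\cdot\dfrac{(u_0t_0)^{2Q}}{(Q/(-y))^Q}$ and proves the key product bound $u_0t_0\leq(-y)^{-1/2}(Q+\gamma_n/2)$; via $t_0(t_0+1)=Q/(-y)$ this is exactly your ratio bound $(\sigma'+1)/(\sigma+1)\leq 1+\gamma_n/(2Q)$ (you obtain the slightly sharper constant $\gamma_n/(4Q)$, but either suffices). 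The remaining step $u_0^{\gamma_n}\leq C_n(-y)^{\gamma_n/2}Q^{\gamma_n/2}$ in the paper corresponds to your bound $\gamma_n\log(t_0+1)\leq(\gamma_n/2)\log Q+O(1)$.

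Two minor remarks. First, your sentence ``the exponential part is at most $-(-y)-Q+\gamma_n/2$'' has a sign slip: it should read $(-y)-Q$ (equivalently $-y-Q$), which is what you then correctly exponentiate to $e^{-y}e^{-Q}$. Second, when you multiply the two upper bounds from Proposition~\ref{l:laplace-method} to deduce \eqref{e:sharp estimate for product}, a harmless polynomial factor $Q^{1/4+\gamma_n/2}$ survives; the paper's stated inequality suppresses this as well, and in the only application (Lemma~\ref{l:coefficients-estimate}) such polynomial factors are absorbed via Stirling's formula for $\Gamma(Q+1)$, so this is immaterial.
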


\begin{proof} 
The calculation in the proof is purely elementary. 
The order estimate involving the parameter $Q$ will be used at crucial places for our later estimates, so we include the detailed proof. 
Plugging the critical points formulae \eqref{e:t_0} and \eqref{e:u_0} into the expression of $F$ and $G$, 
\begin{equation}
F(t_0) + G(u_0) = yt_0 + (-y)^{\frac{1}{2}} u_0 - \frac{2Q+\gamma_n}{2} + Q\log\frac{t_0}{t_0+1} + (2Q+\gamma_n)\log u_0,
\end{equation}
where $Q\equiv \alpha-\beta-1$ and $\gamma_n\equiv \frac{1}{2}+\frac{1}{n}$ as before.

First, it is straightforward that \begin{equation}
y t_0 + (-y)^{\frac{1}{2}} u_0 \leq  (-y) + \frac{\gamma_n}{2}. \end{equation}
So this implies that
\begin{eqnarray}
e^{F(t_0)+G(u_0)}&\leq& C_n\cdot e^{-y} \cdot e^{-Q} 
\cdot\Big(\frac{t_0}{1+t_0}\Big)^{Q}\cdot u_0^{2Q+\gamma_n}
\nonumber\\
&= &C_n\cdot e^{-y} \cdot e^{-Q} 
\cdot\Big(\frac{t_0^2}{t_0(1+t_0)}\Big)^{Q}\cdot u_0^{2Q+\gamma_n}
\nonumber\\
&= & C_n\cdot e^{-y}\cdot u_0^{\gamma_n} \cdot e^{-Q} \cdot \frac{(u_0t_0)^{2Q}}{(\frac{Q}{-y})^Q},
\end{eqnarray}
where the last equality follows from \eqref{e:t_0-eq}. 

Now we claim \begin{equation}
u_0t_0\leq(-y)^{-\frac{1}{2}}\cdot(Q+\frac{\gamma_n}{2}).\label{e:ut-product-estimate}
\end{equation}
To prove this,  we denote $\tau\equiv\frac{2\gamma_n}{-y}>0$ and $\hq\equiv\frac{4Q}{-y}>0$. Then
using the critical point formulae of $u_0$ and $t_0$ given by \eqref{e:t_0} and \eqref{e:u_0}, we obtain
\begin{eqnarray}&& u_0t_0 \nonumber\\
&=& \frac{(-y)^{\frac{1}{2}}}{4}\cdot \Big(1+\sqrt{1+\hq}\Big)\cdot\Big(-1+\sqrt{1+\hq + \tau}\Big)
\nonumber\\
 &=& \frac{(-y)^{\frac{1}{2}}}{4}\cdot \Big(-1+\sqrt{1+\hq}\cdot \sqrt{1+\hq+\tau} 
+ \sqrt{1+\hq} - \sqrt{1+\hq +\tau}\Big)
\nonumber\\
&\leq &\frac{(-y)^{\frac{1}{2}}}{4}\cdot \Big(-1+\sqrt{1+\hq+\tau}\cdot \sqrt{1+\hq+\tau} 
+ \sqrt{1+\hq+\tau} - \sqrt{1+\hq +\tau}\Big)
\nonumber\\
&= &\frac{(-y)^{\frac{1}{2}}}{4}\cdot (\hq+\tau)
 \nonumber\\
 &=&(-y)^{-\frac{1}{2}}\cdot(Q+\frac{\gamma_n}{2}).
\end{eqnarray}
Then it follows that
\begin{equation}
\frac{(u_0t_0)^{2Q}}{(\frac{Q}{-y})^Q} \leq  \frac{(Q+\frac{\gamma_n}{2})^{2Q}}{Q^Q} =Q^Q\cdot (1+\frac{\gamma_n}{2Q})^{2Q} \leq e^{\gamma_n}\cdot Q^{Q}.
\end{equation}
Moreover, we notice that
\begin{equation}
u_0^{\gamma_n} \leq C_n \cdot Q^{\frac{\gamma_n}{2}}\cdot (-y)^{\frac{\gamma_n}{2}}.\end{equation}
Therefore, combining all the above, we have
\begin{eqnarray}
e^{F(t_0) + G(u_0)} &\leq& C_n\cdot e^{-y}\cdot u_0^{\gamma_n} \cdot e^{-Q} \cdot \frac{(u_0t_0)^{2Q}}{(\frac{Q}{-y})^Q} 
\nonumber\\
&\leq& C_n\cdot   (-y)^{\frac{\gamma_n}{2}}\cdot e^{-y} \cdot  e^{-Q} \cdot Q^{Q+\frac{\gamma_n}{2}}.  
\end{eqnarray}
\end{proof}

In the next subsections, we will also need the following monotonicity formula to study the integral estimates for the above fundamental solutions $\mathcal{G}_k$ and $\mathcal{D}_k$.
\begin{lemma}\label{l:monotonicity}
Let
\begin{align}
\hf(z) &\equiv -\frac{jz^n}{2} + F(t_0(z)),
\\
\hg(z) &\equiv -\frac{jz^n}{2} + G(u_0(z)),
\end{align}
then for all $\eta\geq 0$,   when  $z\geq \eta^{\frac{2}{n}}$, 
   $\hf(z)+\eta \cdot z^{\frac{n}{2}}$ is decreasing and $\hg(z)-\eta \cdot z^{\frac{n}{2}}$ is increasing. 

\end{lemma}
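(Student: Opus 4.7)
My strategy is to compute $\hf'(z)$ and $\hg'(z)$ in closed form via the \emph{envelope theorem}, after which the monotonicity claims reduce to elementary inequalities. Since $t_0(y)$ is by construction a critical point of the map $t\mapsto F(t;y) = yt + Q\log\tfrac{t}{t+1}$ and $u_0(y)$ is a critical point of $u\mapsto G(u;y) = -u^2 + 2\sqrt{-y}\,u + (2Q+\gamma_n)\log u$, the partial derivatives $F_t$ and $G_u$ vanish at these points, so the envelope theorem gives
\[
\tfrac{d}{dy}F(t_0(y);y) = F_y(t_0;y) = t_0(y), \qquad \tfrac{d}{dy}G(u_0(y);y) = G_y(u_0;y) = -\tfrac{u_0(y)}{\sqrt{-y}}.
\]
Substituting $y = -jz^n$, so that $\tfrac{dy}{dz} = -njz^{n-1}$ and $\sqrt{-y} = \sqrt{j}\,z^{n/2}$, the chain rule yields
\begin{align*}
\hf'(z) &= -\tfrac{njz^{n-1}}{2} - njz^{n-1}t_0(z) = -njz^{n-1}\bigl(\tfrac{1}{2} + t_0(z)\bigr), \\
\hg'(z) &= -\tfrac{njz^{n-1}}{2} + n\sqrt{j}\,z^{n/2-1}u_0(z).
\end{align*}

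The essential simplification is that the explicit formulas \eqref{e:t_0} and \eqref{e:u_0} carry additive constants ($\tfrac{1}{2}$ inside $t_0$, and $\tfrac{\sqrt{-y}}{2}$ inside $u_0$) whose contributions precisely cancel the first $-\tfrac{njz^{n-1}}{2}$ terms above. A direct substitution gives
\[
\tfrac{1}{2}+t_0(z)=\tfrac{1}{2}\sqrt{1+\tfrac{4Q}{jz^n}}, \qquad n\sqrt{j}\,z^{n/2-1}u_0(z) = \tfrac{njz^{n-1}}{2}\Bigl(1 + \sqrt{1+\tfrac{4Q+2\gamma_n}{jz^n}}\Bigr),
\]
and after pulling the factor $jz^n$ inside each square root, I obtain the closed forms
\[
\hf'(z) = -\tfrac{n}{2}z^{\frac{n}{2}-1}\sqrt{j^2z^n + 4Qj}, \qquad \hg'(z) = +\tfrac{n}{2}z^{\frac{n}{2}-1}\sqrt{j^2z^n + (4Q+2\gamma_n)j}.
\]

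With these in hand, the lemma is immediate. For the $\hf$ case,
\[
\bigl(\hf + \eta z^{n/2}\bigr)'(z) = \tfrac{n}{2}z^{\frac{n}{2}-1}\bigl(\eta - \sqrt{j^2z^n + 4Qj}\bigr),
\]
and analogously $\bigl(\hg - \eta z^{n/2}\bigr)'(z) = \tfrac{n}{2}z^{\frac{n}{2}-1}\bigl(\sqrt{j^2z^n + (4Q+2\gamma_n)j} - \eta\bigr)$. Using $j\geq 1$ (since $j_k$ is a positive integer in the nonzero-mode case) together with $Q\geq 0$ and $\gamma_n=\tfrac{1}{2}+\tfrac{1}{n}>0$ in the regime where $t_0$ and $u_0$ exist, both radicands are bounded below by $z^n$, so both square roots dominate $z^{n/2}$. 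The hypothesis $z\geq\eta^{2/n}$ is exactly $z^{n/2}\geq\eta$, which gives the desired signs for both derivatives. There is no conceptual obstacle here: the only nontrivial step is the algebraic cancellation producing the two clean square-root expressions, and once that is done the inequality $z^{n/2}\geq\eta$ is the precise threshold that makes everything work.
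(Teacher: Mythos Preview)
Your proof is correct and follows essentially the same approach as the paper. Both arguments use the critical-point condition $F_t(t_0;y)=0$ (what you call the envelope theorem, what the paper writes as $F'(t_0)\cdot\tfrac{dt_0}{dy}=0$) to obtain $\tfrac{d}{dy}F(t_0(y);y)=t_0(y)=\tfrac{1}{2}\sqrt{1+\tfrac{4Q}{-y}}-\tfrac12$, substitute $y=-jz^n$ via the chain rule, and then reduce to the inequality $jz^{n/2}\geq\eta$; your extra step of absorbing $jz^{n-1}$ into the radical to produce the closed forms $\hf'(z)=-\tfrac{n}{2}z^{n/2-1}\sqrt{j^2z^n+4Qj}$ and the analogous one for $\hg'$ is a cosmetic (and pleasant) simplification, not a different argument.
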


\begin{proof}

Let $y = - jz^n$, then 
it is straightforward that
\begin{align}
\frac{d \widehat{F}(y)}{dy} = \frac{1}{2} + t_0(y) + F'(t_0(y))\cdot \frac{dt_0(y)}{dy}
 = \frac{1}{2} + t_0(y) 
= \frac{1}{2}\sqrt{1+\frac{4Q}{-y}} \geq \frac{1}{2}.\end{align}
This implies that, as $z\geq \eta^{\frac{2}{n}}$,
\begin{align}
\frac{d(\widehat{F}(z)+\eta z^{\frac{n}{2}})}{dz} = \frac{d\widehat{F}(y)}{dy}\cdot  (-nj\cdot z^{n-1}) + \frac{n\cdot \eta}{2}\cdot z^{\frac{n}{2}-1}
\leq -\frac{n}{2} \cdot z^{\frac{n}{2}-1} (j\cdot z^{\frac{n}{2}} -  \eta)
 \leq 0.
\end{align}
By similar calculations, one can also obtain that $\widehat{G}(z) - \eta\cdot z^{\frac{n}{2}}$
is increasing as $z\geq \eta^{\frac{2}{n}}$.
\end{proof}

\

{\bf Case (B)}: Now we consider the case when $Q\leq 1$. As  mentioned in the above, this case is easier.

\begin{lemma}
\label{l:bounded-Q} Let $Q\leq 1$, then there is some dimensional constant $C_n>0$ such that
\begin{align}
C_n^{-1}\cdot e^y \cdot (-y)^{\beta-\alpha} & \leq \Tri(\fb, \fa, y)  \leq  e^y \cdot (-y)^{\beta-\alpha},  \label{e:Tri-estimate-bounded-Q}
\\
C_n^{-1}\cdot(-y)^{-\beta} & \leq \Ku (\fb, \fa, y)  \leq C_n\cdot(-y)^{-\beta}. \label{e:Ku-estimate-bounded-Q}
\end{align}
for all $y\leq -1$.
\end{lemma}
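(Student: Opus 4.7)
The plan is to handle the $\Tri$ and $\Ku$ estimates separately, exploiting that in Case B the parameter $Q = \alpha-\beta-1 \in [-\tfrac{1}{n}, 1]$ is bounded, hence so is $\beta \in [-1-\tfrac{1}{n}, 0]$, and $\Gamma(\alpha-\beta) = \Gamma(Q+1)$ is pinched between positive dimensional constants. This bounded-parameter regime makes both estimates substantially simpler than in Case A; in particular, no $Q$-dependent polynomial corrections will appear.

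For $\Tri(\beta,\alpha,y)$, I would start from the integral representation \eqref{e:def-Tri} and substitute $s = -yt$, producing
\begin{equation}
\Tri(\beta,\alpha,y) = \frac{e^y(-y)^{\beta-\alpha}}{\Gamma(\alpha-\beta)}\int_0^\infty e^{-s}\,s^Q\,\Big(1+\frac{s}{-y}\Big)^{\beta-1}ds.
\end{equation}
Since $\beta-1 \leq -1$, the factor $(1+s/(-y))^{\beta-1}$ is at most $1$ everywhere, while on $s \in [0,-y]$ it is at least $2^{\beta-1} \geq c_n > 0$. Bounding the resulting integrals by $\int_0^\infty e^{-s}s^Q\,ds = \Gamma(\alpha-\beta)$ above and by $\int_0^1 e^{-s}s^Q\,ds \geq c_n'$ below (both sandwiched by dimensional constants because $Q$ is bounded) yields \eqref{e:Tri-estimate-bounded-Q} immediately; indeed the upper constant comes out to be exactly $1$, which is what is claimed.

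For $\Ku(\beta,\alpha,y)$, I would invoke Lemma \ref{l:Ku-both-sides-bound-integral} to reduce to estimating $\int e^{G(u)}du$, where $G(u) = -u^2 + 2\sqrt{-y}\,u + (2Q+\gamma_n)\log u$ and $2Q+\gamma_n$ is bounded. The critical point $u_0 = \tfrac12\sqrt{-y}\bigl(1+\sqrt{1+(4Q+2\gamma_n)/(-y)}\bigr)$ satisfies $u_0 = \sqrt{-y}+O(1/\sqrt{-y})$ and $G''(u_0) = -2-(2Q+\gamma_n)/u_0^2$ is pinched between two negative dimensional constants. Running Laplace's method as in Proposition \ref{l:laplace-method}, but with only dimensional constants in the bookkeeping, gives $\int e^{G(u)}du \asymp e^{G(u_0)}$; the explicit expansion $G(u_0) = -y+(Q+\tfrac{\gamma_n}{2})\log(-y)+O(1)$, combined with the elementary arithmetic identity $\frac{1-2\alpha}{4}+Q+\frac{\gamma_n}{2}=-\beta$ (which uses $\alpha = 1-\tfrac{1}{n}$ and $\gamma_n = \tfrac12+\tfrac1n$), produces \eqref{e:Ku-estimate-bounded-Q}.

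The main nuisance I anticipate is the lower bound on $\Ku$ when $-y$ is only modestly larger than $1$: the truncation $[1/\sqrt{-y},\infty)$ inherited from Lemma \ref{l:Ku-both-sides-bound-integral} then sits close to the critical point $u_0$, so Laplace's method does not deliver the lower bound out of the box. The cleanest workaround is to treat the regime $-y \in [1,M_n]$ separately by a compactness argument on the parameter set $\{(\beta,y):Q\in[-\tfrac{1}{n},1],\ y\in[-M_n,-1]\}$: here $\Ku(\beta,\alpha,y)/(-y)^{-\beta}$ is jointly continuous, and \emph{strictly positive} by the integral formula \eqref{e:int-Ku} together with positivity of $I_{\alpha-1}$ on $(0,\infty)$ (noting $\alpha-1=-\tfrac{1}{n}\in(-1,0)$, so every power series coefficient is positive), hence sandwiched by dimensional constants on this compact set.
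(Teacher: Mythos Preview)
Your approach is essentially the same as the paper's. For $\Tri$, both you and the paper use the integral representation and bound $(1+t)^{\beta-1}$ above by $1$ (giving the upper bound with constant exactly $1$) and below by $2^{\beta-1}$ on a finite interval; the only cosmetic difference is that you substitute $s=-yt$ first and bound afterwards, while the paper bounds first and substitutes afterwards. For the $\Ku$ upper bound, both arguments invoke Lemma~\ref{l:Ku-both-sides-bound-integral} and evaluate $e^{y+G(u_0)}$ via the critical point equation; the paper simply observes that the upper-bound half of Proposition~\ref{l:laplace-method} never used $Q\geq 1$ and quotes it directly, while you propose to rerun Laplace's method with simplified bookkeeping---same computation.

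One point worth noting: the paper's written proof actually \emph{omits} the lower bound for $\Ku$ in \eqref{e:Ku-estimate-bounded-Q} (it proves the $\Tri$ bounds and the $\Ku$ upper bound, then writes ``This completes the proof''). You supply this missing half. Your compactness argument on $\{(\beta,y):Q\in[-\tfrac1n,1],\ y\in[-M_n,-1]\}$, using positivity of $I_{\alpha-1}$ in the integrand of \eqref{e:int-Ku}, is a clean and correct way to handle the regime where $-y$ is bounded. In fact your caution there is slightly more than necessary: since $u_0\geq 1\geq 1/\sqrt{-y}$ for all $y\leq -1$ (one checks $4Q+2\gamma_n\geq 1-\tfrac2n\geq 0$), the truncated domain $[1/\sqrt{-y},\infty)$ always contains $[u_0,u_0+1]$, on which $G(u)\geq G(u_0)-C_n(u-u_0)^2$ by concavity, so a one-sided Laplace lower bound already works uniformly for all $y\leq -1$. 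Either route is fine.
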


\begin{remark}
In the case $Q\leq 1$, the estimate is optimal in the sense that it coincides with the asymptotic behavior of $\Tri$ and $\Ku$ for fixed $\alpha$ and $\beta$, as given in Lemma \ref{l:asymp-Ku-Tri} and Lemma  \ref{l:general-Ku-asymp}.
\end{remark}

\begin{proof}
First, we prove \eqref{e:Tri-estimate-bounded-Q}. Both the upper bound and lower bound estimates can be proved in the similar way:
\begin{align}
\Tri(\beta,\alpha,y)& =  \frac{e^y}{\Gamma(\fa-\fb)}\int_0^{\infty}e^{yt}t^{\fa-\fb-1}(1+t)^{\fb-1}dt
\nonumber\\
&\leq     \frac{e^y}{\Gamma(\fa-\fb)}\cdot \int_0^{\infty }e^{yt}t^{\fa-\fb-1}dt
\nonumber\\
& =   \frac{e^y\cdot (-y)^{\beta-\alpha}}{\Gamma(\fa-\fb)}\cdot \int_0^{\infty}e^{-u}u^{\fa-\fb-1}du
\nonumber\\
&=  e^y\cdot (-y)^{\beta-\alpha}.\end{align}
Similarly,
\begin{align}
\Tri(\fb,\fa, y) 
& \geq \frac{e^y}{\Gamma(\fa-\fb)}\int_0^1e^{yt}t^{\fa-\fb-1}(1+t)^{\fb-1}dt
\nonumber\\
& \geq C_n\cdot e^y \int_0^1 e^{yt}t^{\fa-\fb- 1}dt 
\nonumber\\
& \geq C_n \cdot e^y \cdot (-y)^{\fb-\fa}.
\end{align}

Next, we prove the upper bound estimate for $\Ku$. Notice in the proof of Lemma \ref{l:laplace-method} we do not need the condition $Q\leq 1$ for the upper bound on $\Ku$. So we have
\begin{equation}
\Ku(\beta,\alpha,y) \leq C_n\cdot \frac{\Gamma(\alpha)}{\Gamma(\alpha-\beta)}\cdot (-y)^{\frac{1-2\alpha}{4}}\cdot e^{y+G(u_0)}.
\end{equation}
 To prove \eqref{e:Ku-estimate-bounded-Q}, we need an upper bound estimate for $e^{y+G(u_0)}$. This follows from elementary computations.
In fact,
\begin{equation}
e^{y+G(u_0)} = e^{y  - u_0^2 + 2\sqrt{-y} u_0}\cdot (u_0)^{2Q+\gamma_n} 
\nonumber\\
 \leq C_n \cdot  e^{y  - u_0^2 + 2\sqrt{-y} u_0}\cdot (-y)^{Q+\frac{\gamma_n}{2}}.
\end{equation}
Notice that $u_0$ satisfies $G'(u_0)=0$, i.e.,
\begin{equation}
u_0^2 - \sqrt{-y}\cdot u_0 -\frac{2Q+\gamma_n}{2} = 0,
\end{equation}
so we have
\begin{equation}
e^{y+G(u_0)} \leq C_n \cdot e^{y+\sqrt{-y}u_0}\cdot (-y)^{Q+\frac{\gamma_n}{2}}.
\end{equation}
By \eqref{e:u_0}, it is straightforward that
\begin{align}
y+\sqrt{-y}u_0
=\frac{y}{2}\Big(1-\sqrt{1+\frac{4Q+2\gamma_n}{-y}}\Big)
=\frac{2Q+\gamma_n}{1+\sqrt{1+\frac{4Q+2\gamma_n}{-y}}} 
\in [C_n^{-1}, C_n],
\end{align}
for some dimensional constant $C_n>0$.
Therefore,
\begin{align}
e^{y+G(u_0)} \leq C_n (-y)^{Q+\frac{1}{4}+\frac{1}{2n}},
\end{align}
and hence
\begin{align}
\Ku(\fb,\fa,y) \leq C_n (-y)^{Q+\frac{1}{n}} = C_n(-y)^{-\beta}.
\end{align}
This completes the proof. 
\end{proof}

Converting into the variables $z$ we obtain 

\begin{corollary}
\label{c:Dk upper and lower bound Q small case}
There exists $C_n>0$ such that for all $z\geq 1$, we have 
\begin{align}
C_n^{-1} \cdot e^{-\frac{j_k\cdot z^{n}}{2}} \cdot (j_k z^{n})^{\beta-\alpha} &\leq  \mathcal{D}_k(z)  \leq
 C_n \cdot e^{-\frac{j_k\cdot z^{n}}{2}} \cdot (j_k z^{n})^{\beta-\alpha},\\
C_n^{-1}\cdot e^{\frac{j_k\cdot z^n}{2}}\cdot (j_k z^n)^{-\beta} &\leq  \mathcal{G}_k(z)
 \leq C_n\cdot e^{\frac{j_k\cdot z^n}{2}}\cdot (j_k z^n)^{-\beta}.
\end{align}

\end{corollary}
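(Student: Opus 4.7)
The plan is to deduce this corollary as a direct translation of Lemma \ref{l:bounded-Q} back into the original variable $z$, using the explicit change of variables \eqref{e:nonhomogeneous-transformation} and \eqref{e:generic-fundamental-solutions}.

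First I would recall that by construction,
\begin{equation*}
\mathcal{G}_k(z) = e^{\frac{j_k z^n}{2}} \cdot \Ku(\beta,\alpha,-j_k z^n), \qquad \mathcal{D}_k(z) = e^{\frac{j_k z^n}{2}} \cdot \Tri(\beta,\alpha,-j_k z^n),
\end{equation*}
so the task reduces to substituting $y = -j_k z^n$ into the estimates \eqref{e:Tri-estimate-bounded-Q} and \eqref{e:Ku-estimate-bounded-Q} and then multiplying through by the prefactor $e^{\frac{j_k z^n}{2}}$. The only hypothesis needed to invoke Lemma \ref{l:bounded-Q} is $y \leq -1$, which holds here because $j_k \in \mathbb{N}$ with $j_k \neq 0$ forces $j_k \geq 1$, so for all $z \geq 1$ one has $-j_k z^n \leq -1$.

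Carrying out the substitution for $\mathcal{D}_k$, the bounds from \eqref{e:Tri-estimate-bounded-Q} give
\begin{equation*}
C_n^{-1} \cdot e^{-j_k z^n}\cdot (j_k z^n)^{\beta-\alpha} \;\leq\; \Tri(\beta,\alpha,-j_k z^n) \;\leq\; e^{-j_k z^n}\cdot (j_k z^n)^{\beta-\alpha},
\end{equation*}
and multiplying by $e^{\frac{j_k z^n}{2}}$ yields the claimed two-sided bound for $\mathcal{D}_k(z)$. For $\mathcal{G}_k$, the bounds in \eqref{e:Ku-estimate-bounded-Q} become
\begin{equation*}
C_n^{-1}\cdot (j_k z^n)^{-\beta} \;\leq\; \Ku(\beta,\alpha,-j_k z^n) \;\leq\; C_n \cdot (j_k z^n)^{-\beta},
\end{equation*}
and again multiplying by $e^{\frac{j_k z^n}{2}}$ produces the claimed bounds for $\mathcal{G}_k(z)$.

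No real obstacles are expected here since the corollary is purely a change-of-variable rephrasing of Lemma \ref{l:bounded-Q}; the only point to check carefully is that the hypothesis $y \leq -1$ is satisfied in our regime, which as noted follows automatically from $j_k \geq 1$ and $z \geq 1$. I would state the proof in a single short paragraph that quotes Lemma \ref{l:bounded-Q} and performs the substitution.
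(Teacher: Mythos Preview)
Your proposal is correct and matches the paper's approach exactly: the paper simply states ``Converting into the variables $z$ we obtain'' before the corollary, treating it as an immediate translation of Lemma \ref{l:bounded-Q} via the substitution $y=-j_k z^n$ and multiplication by $e^{j_k z^n/2}$. Your check that $y\leq -1$ follows from $j_k\geq 1$ and $z\geq 1$ is the only point that needs verification, and you have it.
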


We end this subsection by making some remarks regarding the above estimates on $\Ku$ and $\Tri$. Notice that in the case $Q\equiv \fa-\fb -1 \leq 1$  we applied Laplace's method to turn the problem into estimates on exponential integrals. One may wonder  how far the  uniform estimates in Lemma \ref{l:laplace-method} is from optimal comparing to the {\it non-uniform} estimate with the optimal order in Lemma \ref{l:bounded-Q}.  We can consider two extreme cases depending on the size of $Q$ compared with $-y$. 

First we assume $\frac{Q^2}{-y}\ll1$, which obviously includes the case when we fix $Q$ and let $y\rightarrow-\infty$. Then by definition we see that 
\begin{equation}
t_0=\frac{Q}{-y}+O\Big((\frac{Q}{-y})^2\Big), 
\end{equation}
and we get 
\begin{equation}
F(t_0)=yt_0+Q\log \frac{t_0}{t_0+1}=-Q+Q\log Q-Q\log(-y)+O(\frac{Q}{-y}).  
\end{equation}  
So by Lemma \ref{l:laplace-method} we get 
\begin{equation}
C_n^{-1} Q^{-\frac{1}{4}-\frac{1}{2n}} e^y (-y)^{-Q-1} Q^Qe^{-Q}\leq   \Tri\leq C_n\frac{1}{\Gamma(\alpha-\beta)}e^y (-y)^{-Q} Q^Qe^{-Q}Q^{\frac{1}{4}}.
\end{equation}
Notice by Stirling's formula for $Q$ large 
$\Gamma(\alpha-\beta)=Q\Gamma(Q)$ is comparable to 
$C_n Q^{\frac{3}{2}} Q^Qe^{-Q}$.  So up to polynomial errors in $Q$ this estimate is optimal comparing with \eqref{eqn-A27}. 
Similarly, we have 
\begin{equation}
u_0=(-y)^{\frac{1}{2}}\Big(1+\frac{Q+\frac{1}{2}\gamma_n}{-y}+O((\frac{Q}{-y})^2)\Big), 
\end{equation}
and 
\begin{equation}
G(u_0)=-u_0^2+2(-y)^{\frac{1}{2}}u_0+(2Q+\gamma_n)\log u_0\leq C_n e^{-y}(-y)^{Q+\frac{\gamma_n}{2}}.
\end{equation}
So 
\begin{equation}
\Ku\leq C_n \frac{\Gamma(\alpha)}{\Gamma(\alpha-\beta)}(-y)^{\frac{1-2\alpha}{4}} (-y)^{Q+\frac{\gamma_n}{2}}=C_n \frac{\Gamma(\alpha)}{\Gamma(\alpha-\beta)}(-y)^{-\beta},
\end{equation}
which is again optimal comparing with \eqref{eqn-A34}.

Secondly we assume the other extreme $\frac{Q}{(-y)^3}\gg1$. In this case we have 
\begin{equation}
t_0=\sqrt{\frac{Q}{-y}}-\frac{1}{2}+O(\sqrt{\frac{-y}{Q}}). 
\end{equation}
Then we get 
\begin{equation}
F(t_0)=-2\sqrt{-Qy}-\frac{1}{2}y+O(1), 
\end{equation}
and 
\begin{equation}
C_n^{-1}Q^{-\frac{1}{4}-\frac{1}{2n}} e^{\frac{1}{2}y-\sqrt{-Qy}} (-y)^{-Q-1}\leq  \Tri(y) \leq C_n\frac{1}{\Gamma(\alpha-\beta)}Q^{\frac{1}{4}}(-y)^{-Q}e^{\frac{1}{2}y-2\sqrt{-Qy}}. 
\end{equation}
Similarly, 
we get 
\begin{equation}
G(u_0)=2\sqrt{-Qy}-\frac{y}{2}+(Q+\frac{1}{2}\gamma_n)\log Q-Q.
\end{equation}
So 
\begin{equation}
\Ku(y) \leq C_n \cdot \frac{\Gamma(\alpha)}{\Gamma(\alpha-\beta)}(-y)^{\frac{1-2\alpha}{4}}e^{\frac{1}{2}y+2\sqrt{-Qy}} e^{-Q}Q^{Q+\frac{1}{2}\gamma_n}.
\end{equation}
In this case even though in the produce $\Tri\cdot \Ku$ there is a good cancellation each of them does behave quite differently from the previous case. This also gives a reason why we do get an optimal estimate (up to polynomial errors in $y$ and $Q$) for the product $\Tri\cdot \Ku$, comparing with \eqref{eqn-A27} and \eqref{eqn-A34}.

\section{Asymptotics of harmonic functions on the Calabi model space}
\label{s:asymp harmonic Calabi model}

 As Section \ref{s:ode-setup}, we fix $r_0\in (0, 1)$, and  view the Calabi model space $\mathcal C^n$ as the product of a fixed cross section $Y^{2n-1}\cong \{\varrho=r_0\}$ with the restricted metric $h_0=g_{\mathcal C^n}|_{\{\varrho=r_0\}}$ with  a ray $\mathbb R^+$.  The spectrum of the Laplacian operator on $Y$ is given by $\{\Lambda_k\}_{k=0}^\infty$, with $\Lambda_0=0$, and we have chosen an orthonormal basis of complex valued eigenfunctions of the form $\{\vf_k\}_{k=0}^{\infty}$ such that
 \begin{align}
 \begin{cases}
- \Delta_{Y^{2n-1}}\varphi_k=\Lambda_k\cdot \varphi_k,
 \\
 \|\varphi_k\|_{L^2(Y^{2n-1})}=1.
 \end{cases}
 \end{align}

In the asymptotic analysis of the harmonic functions on 
$\Ca$,  
we need some uniform estimates for the eigenfunctions $\{\varphi_k\}_{k=0}^{\infty}$ in the $L^2$-orthonormal basis.  
In particular, we need the following uniform $C^k$-estimate of the eigenforms in terms of the corresponding eigenvalues. The proof follows from the standard $W^{2,p}$-elliptic regularity and the Sobolev embedding theorems, so we omit it.
\begin{lemma}
\label{l:eigenfunction-bound}
Let $(M^m,g)$ be a closed Riemannian manifold of dimension $m\geq 2$. For any $p\in\dN$, denote by $\Lambda^{(p)}\equiv \{\lambda_j\}_{j=0}^{\infty}$ with $\lambda_0=0$ the spectrum of the Hodge Laplacian $\Delta$ acting on the $p$-forms.  For any $k\in\dN $, there is some constant $C>0$ depending only on $(M, g)$ and $k$,  $p$  such that for all  
$\phi_j\in\Omega^p(M^m)$ satisfying
\begin{align}
\begin{cases}
\Delta \phi_j = \lambda_j \phi_j,
\\
\|\phi_j\|_{L^2(M^m)}=1,
\end{cases}
\end{align}
we have
\begin{equation}
\| \nabla^k\phi_j \|_{C^k(M^m)}\leq C_k \cdot (\lambda_j)^{\frac{1}{2}[\frac{m}{2}]+\frac{k+1}{2}}.
\end{equation}

\end{lemma}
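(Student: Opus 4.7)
The plan is to combine iterated $L^2$ elliptic regularity for the Hodge Laplacian $\Delta$ on $p$-forms with the Sobolev embedding theorem, and then track the resulting exponent of $\lambda_j$. The key fact is that $\Delta$ is a self-adjoint, second-order elliptic operator on $\Omega^p(M^m)$ with smooth coefficients, so for every $N \in \mathbb{N}$ there is a global estimate of the form
\[
\|\phi\|_{H^{2N}(M)} \leq C_N\bigl(\|\Delta^N \phi\|_{L^2(M)} + \|\phi\|_{L^2(M)}\bigr)
\]
for all $\phi \in \Omega^p(M^m)$, where $C_N$ depends only on $(M, g)$, $p$ and $N$.

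First I would specialize this to $\phi = \phi_j$ and observe that $\Delta^N \phi_j = \lambda_j^N \phi_j$ together with the normalization $\|\phi_j\|_{L^2} = 1$ immediately yields the uniform $H^{2N}$-bound
\[
\|\phi_j\|_{H^{2N}(M)} \leq C_N(\lambda_j^N + 1).
\]
I would then invoke the Sobolev embedding $H^{s}(M^m) \hookrightarrow C^{k}(M^m)$, valid whenever $s > k + m/2$. Choosing $N$ to be the smallest integer for which $2N > k + m/2$ produces a bound $\|\phi_j\|_{C^k(M)} \leq C_k \lambda_j^N$ with $N$ matching the exponent in the statement (absorbing the finitely many small eigenvalues into $C_k$). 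For the slightly stronger norm $\|\nabla^k \phi_j\|_{C^k(M)}$ actually appearing on the left-hand side, I would either repeat the argument with $N$ chosen large enough so that $H^{2N} \hookrightarrow C^{2k}$, or use Bochner-type commutation identities between $\nabla$ and $\Delta$ to convert derivative bounds into eigenvalue bounds, at the cost of polynomial factors that can be absorbed into the final exponent.

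There is no conceptual obstacle here: this is a textbook application of elliptic theory on a closed manifold, and the proof amounts to bookkeeping the exponent of $\lambda_j$. The mild subtleties are ensuring that the elliptic regularity constants $C_N$ depend only on $(M, g, p)$ (immediate since $M$ is closed and $\Delta$ has smooth coefficients) and that the Sobolev embedding constant is uniform in $\phi$ (also immediate). The main bookkeeping step is simply the choice of the minimal $N$ for which the Sobolev embedding saturates, which is precisely what produces the explicit exponent $\tfrac{1}{2}\lfloor m/2 \rfloor + \tfrac{k+1}{2}$ in the statement.
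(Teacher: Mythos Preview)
Your proposal is correct and matches the paper's approach: the paper itself omits the proof, simply noting that it follows from standard $W^{2,p}$ elliptic regularity and Sobolev embedding. The only minor bookkeeping point is that to hit the stated exponent exactly via the $L^2$ route you should also allow odd Sobolev orders (using $\|\phi_j\|_{H^1}\lesssim\lambda_j^{1/2}$, which follows from $\langle\Delta\phi_j,\phi_j\rangle=\lambda_j$), since restricting to even orders $2N$ can overshoot the exponent by $\tfrac12$ in some cases.
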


In addition, we need a basic lemma on the decay of Fourier coefficients of the expansion of a sufficiently smooth function in terms of eigenfunctions.

\begin{lemma}
\label{l:error-estimate}

 Let $K_0\geq 1$ and let      
 $\xi \in C^{2K_0}(Y^{2n-1})$ satisfy the $L^2$-expansion
 \begin{equation}
 \xi(\by) = \sum\limits_{k=1}^{\infty} \xi_k  \cdot \vf_k (\by),
 \end{equation}
then for all $k\in\dZ_+$,\begin{equation}|\xi_{k}|\leq  \frac{ C|\xi|_{C^{2K_0}(Y^{2n-1})}}{(\Lambda_{k})^{K_0}},\end{equation}
where the constant $C>0$ is independent of $k$.
\end{lemma}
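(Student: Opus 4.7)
The plan is to use the fact that the $\varphi_k$ are eigenfunctions of $-\Delta_{Y^{2n-1}}$ to trade powers of $\Lambda_k$ for derivatives of $\xi$, followed by a straightforward $L^2$ estimate. This is the standard integration-by-parts trick for showing rapid decay of spectral coefficients of smooth functions on a closed manifold.

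Concretely, I would start from the defining formula for the Fourier coefficient,
\begin{equation}
\xi_k = \int_{Y^{2n-1}} \xi(\by)\, \overline{\varphi_k(\by)}\, dV_{h_0},
\end{equation}
and use the eigenfunction relation $-\Delta_{h_0}\varphi_k = \Lambda_k \varphi_k$ iterated $K_0$ times, so that
\begin{equation}
\Lambda_k^{K_0}\, \overline{\varphi_k} = \overline{(-\Delta_{h_0})^{K_0}\varphi_k}.
\end{equation}
Since $Y^{2n-1}$ is closed, integration by parts $K_0$ times (self-adjointness of $-\Delta_{h_0}$ on a closed manifold, with no boundary terms) yields
\begin{equation}
\Lambda_k^{K_0}\, \xi_k = \int_{Y^{2n-1}} (-\Delta_{h_0})^{K_0}\xi(\by)\, \overline{\varphi_k(\by)}\, dV_{h_0}.
\end{equation}

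Next I would apply the Cauchy--Schwarz inequality together with the $L^2$-normalization $\|\varphi_k\|_{L^2}=1$:
\begin{equation}
\bigl|\Lambda_k^{K_0}\, \xi_k\bigr| \leq \bigl\|(-\Delta_{h_0})^{K_0}\xi\bigr\|_{L^2(Y^{2n-1})}.
\end{equation}
Since $(-\Delta_{h_0})^{K_0}$ is a $2K_0$-order differential operator on the fixed compact manifold $(Y^{2n-1}, h_0)$, its coefficients are bounded, and one obtains
\begin{equation}
\bigl\|(-\Delta_{h_0})^{K_0}\xi\bigr\|_{L^2(Y^{2n-1})} \leq C\, |\xi|_{C^{2K_0}(Y^{2n-1})},
\end{equation}
where $C$ depends only on $(Y^{2n-1},h_0)$ and $K_0$ (in particular, not on $k$). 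Dividing through by $\Lambda_k^{K_0}$ (permissible since $\Lambda_k > 0$ for $k \geq 1$) gives the claimed bound.

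There is no serious obstacle here: the argument is entirely elementary once one recalls that $-\Delta_{h_0}$ is self-adjoint on a closed manifold and that the $L^2$-norm of a finite-order differential operator applied to $\xi$ is controlled by the appropriate $C^k$-norm. The only point worth stating carefully is the iterated integration by parts, which is justified by the absence of boundary on $Y^{2n-1}$ and by $\xi \in C^{2K_0}$ being sufficiently regular to allow $2K_0$ derivatives.
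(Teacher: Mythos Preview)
Your proposal is correct and follows essentially the same approach as the paper: write $\Lambda_k^{K_0}\varphi_k = (-\Delta_{h_0})^{K_0}\varphi_k$, integrate by parts $K_0$ times using self-adjointness on the closed manifold, and bound the resulting integral by the $C^{2K_0}$-norm of $\xi$ using $\|\varphi_k\|_{L^2}=1$. The only cosmetic difference is that you invoke Cauchy--Schwarz explicitly, whereas the paper bounds $\int |\Delta_{h_0}^{K_0}\xi|\cdot|\varphi_k|$ directly; the content is identical.
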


\begin{proof}
 The estimate is proved by the standard integration by parts. Since the eigenfunctions $\vf_k$ satisfy 
 \begin{equation}
 -\Delta_{h_0}\vf_{k}=\Lambda_{k}\cdot\vf_{k}
 \end{equation} and $\|\vf_k\|_{L^2(Y^{2n-1})}=1$, we have that
\begin{align} 
|\xi_{k}(z)|&=\Big|\int_{Y^{2n-1}}\xi\cdot\vf_k\Big|
=\Big|\int_{Y^{2n-1}}\xi\cdot\frac{(-\Delta_{h_0})^{K_0}\vf_k}{(\Lambda_k)^{K_0}}\Big|\nonumber\\
&\leq \frac{1}{(\Lambda_k)^{K_0}}\int_{Y^{2n-1}}|\Delta_{h_0}^{K_0}\xi|\cdot|\vf_k|\\
&\leq \frac{C|\xi|_{C^{2K_0}(Y^{2n-1})}}{(\Lambda_k)^{K_0}},\end{align}
where $C>0$ depends only on the geometry of $Y^{2n-1}$. 
\end{proof}

\begin{proposition}
[Asymptotics of harmonic functions]\label{p:harmonic-function-decay} Let $(\Ca,g_{\Ca})$ be a Calabi model space with $\dim_{\dC}(\Ca)=n$. Define a constant
\begin{equation}
\label{e:definition of deltab}\delta_{b}\equiv 2\cdot \left(\frac{\lambda_D}{n}\right)^{\frac{1}{2}}>0
\end{equation}
where $\lambda_D>0$ is given by \eqref{e:definition of underline lambda}.  If $u$ is a harmonic function outside a compact set in $\mathcal C^n$ satisfying
\begin{equation}
|u(z,\by)| = O(e^{\delta\cdot z^{\frac{n}{2}}}) \label{e:slow-exp-growth}
\end{equation}
for some $\delta\in (0, \delta_b)$ as $z\rightarrow\infty$. Then $u$ can be decomposed as 
\begin{equation}
u(z,\by)= L (z) + h(z,\by)
\end{equation}
with the following properties:
\begin{enumerate}
\item $ L (z)= \kappa_0\cdot z + c_0$
for some $\kappa_0, c_0\in\dR$.
\item $h(z,\by)$ is harmonic and for any $k\in\dN$, there is some $C_k>0$ such that \begin{equation}|\nabla^k h(z,\by)|\leq C_k\cdot e^{-\underline\delta\cdot z^{\frac{n}{2}}}\label{e:harmonic-error-higher-estimate}\end{equation} for all $\ldel\in (0, \delta_b)$, as $z\to+\infty$. 

\end{enumerate}

\end{proposition}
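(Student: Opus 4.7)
The strategy is the standard separation-of-variables picture, now rigorously implemented using the uniform estimates prepared in Sections \ref{s:ode-setup}--\ref{s:jk not zero}. Expand $u$ along the cross section as
\begin{equation}
u(z,\by)=\sum_{k=0}^{\infty}u_{k}(z)\,\vf_{k}(\by),\qquad u_{k}(z)=\int_{Y^{2n-1}} u(z,\by)\,\overline{\vf_{k}(\by)}\,d\by.
\end{equation}
By the separation procedure recalled in Section \ref{s:ode-setup}, each $u_{k}$ satisfies \eqref{e:homogeneous-ODE}, hence
$u_{k}=a_{k}\mathcal{G}_{k}+b_{k}\mathcal{D}_{k}$ for some constants $a_{k},b_{k}\in\C$. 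The zero mode corresponds to $\Lambda_{0}=0$ (so $j_{0}=\lambda_{0}=0$), where \eqref{e:homogeneous-ODE} degenerates to $u_{0}''=0$, so $u_{0}(z)=\kappa_{0}z+c_{0}$: this is exactly the linear term $L(z)$. Thus I shall focus on $k\geq 1$.

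The next step is to kill all the growing solutions. From the hypothesis $|u(z,\by)|=O(e^{\delta z^{n/2}})$ and Cauchy--Schwarz on $Y^{2n-1}$ one has $|u_{k}(z)|\leq C\,e^{\delta z^{n/2}}$ uniformly in $k$. I would then invoke the asymptotics of the growing fundamental solution $\mathcal{G}_{k}$:
\emph{if} $j_{k}=0$, Lemma \ref{l:j=0-grow-asymp} gives $\mathcal{G}_{k}(z)\sim c_{k}z^{-(n-2)/4}e^{2\sqrt{\lambda_{k}/n}\,z^{n/2}}$ with $2\sqrt{\lambda_{k}/n}\geq\delta_{b}>\delta$ by the definition of $\lambda_{D}$; \emph{if} $j_{k}\geq 1$, Lemma \ref{l:generic-asymp} gives $\mathcal{G}_{k}\sim C(j_k z^n)^{-\beta}e^{j_{k}z^{n}/2}$, which grows much faster than any $e^{\delta z^{n/2}}$. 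Together with the $\sim e^{-\delta_b z^{n/2}}$ or $\sim e^{-j_k z^n/2}$ decay of $\mathcal{D}_{k}$, this forces $a_{k}=0$ for every $k\geq 1$. Consequently
\begin{equation}
h(z,\by)=\sum_{k\geq 1}b_{k}\,\mathcal{D}_{k}(z)\,\vf_{k}(\by).
\end{equation}

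To control $h$ quantitatively, pick a reference slice $z=z_{1}$ large. The coefficients are $b_{k}=u_{k}(z_{1})/\mathcal{D}_{k}(z_{1})$. Since $u(z_{1},\cdot)$ is smooth on $Y^{2n-1}$, the Fourier decay Lemma \ref{l:error-estimate} applied with an arbitrarily large $K_{0}$ gives $|u_{k}(z_{1})|\leq C_{K_{0}}\Lambda_{k}^{-K_{0}}$. Combined with the uniform two-sided estimates on $\mathcal{D}_{k}$ from Corollary \ref{p:j=0 uniform estimate for G and D} (case $j_{k}=0$) and Corollaries \ref{c:Dk upper and lower bound j not zero case}--\ref{c:Dk upper and lower bound Q small case} (case $j_{k}\neq 0$), together with the monotonicity Lemma \ref{l:monotonicity}, I can bound the ratio
\begin{equation}
\Big|\frac{\mathcal{D}_{k}(z)}{\mathcal{D}_{k}(z_{1})}\Big|\leq C\,e^{-\ldel(z^{n/2}-z_{1}^{n/2})}\cdot(\text{polynomial factors in }z,\Lambda_{k})
\end{equation}
for every $\ldel<\delta_{b}$, uniformly in $k$. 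The polynomial growth in $\Lambda_{k}$ is absorbed by the super-polynomial decay $\Lambda_{k}^{-K_{0}}$ of $|b_{k}|$ upon choosing $K_{0}$ large. Combining with Lemma \ref{l:eigenfunction-bound} for the $C^{k}$-bounds on $\vf_{k}$ in terms of $\Lambda_{k}$, the series for $h$ and each of its derivatives converges uniformly on $\{z\geq z_{1}\}$, yielding the claimed estimate \eqref{e:harmonic-error-higher-estimate}.

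The main technical hurdle is the uniform-in-$k$ control of the ratio $\mathcal{D}_{k}(z)/\mathcal{D}_{k}(z_{1})$ for $z\geq z_{1}$: one must handle simultaneously the $j_{k}=0$ modified-Bessel regime, the $j_{k}\neq 0$ confluent-hypergeometric regime, and the boundary between the $Q\leq 1$ and $Q\geq 1$ sub-cases, and at no point may the polynomial prefactors in $k$ spoil the decay rate $\ldel$. This is exactly what Lemma \ref{l:monotonicity} and the matched upper/lower estimates of Sections \ref{s:j=0}--\ref{s:jk not zero} were designed for, so once those are plugged in the estimate on $h$ follows by a routine (but careful) summation argument.
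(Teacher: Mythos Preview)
Your proof is essentially the same as the paper's: separate variables, use the growth hypothesis together with the asymptotics of $\mathcal G_k$ (Lemmas \ref{l:j=0-grow-asymp} and \ref{l:generic-asymp}) to kill the growing coefficients, then bound the ratio $\mathcal D_k(z)/\mathcal D_k(z_1)$ uniformly in $k$ via the case analysis (Corollaries \ref{p:j=0 uniform estimate for G and D}, \ref{c:Dk upper and lower bound j not zero case}, \ref{c:Dk upper and lower bound Q small case} and Lemma \ref{l:monotonicity}), and combine with the Fourier decay of Lemma \ref{l:error-estimate} and the eigenfunction bounds of Lemma \ref{l:eigenfunction-bound} to sum.

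The one point where the paper differs is the derivative estimate \eqref{e:harmonic-error-higher-estimate}. You propose to obtain $|\nabla^k h|$ by differentiating the series term by term, which would require uniform-in-$k$ bounds on $\mathcal D_k'(z),\mathcal D_k''(z),\ldots$ (and some care since $\nabla_{g_{\mathcal C^n}}$ is not simply $\partial_z,\partial_{\by}$ in the collapsing geometry). The paper avoids this entirely: it only proves the $C^0$ decay $|h|\leq C e^{-\underline\delta z^{n/2}}$ from the series, and then upgrades to $C^k$ by lifting to the local universal cover (where the geometry is non-collapsed with uniformly bounded curvature) and applying the standard interior Schauder estimate for the harmonic function $h$. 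This second step is cleaner and does not require any further ODE estimates beyond the ones already established.
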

 
 \begin{proof}

The proof consists of two steps.

In the first step, we will apply separation of variables to show that if a harmonic function $u$ satisfies \eqref{e:slow-exp-growth}, then $u(z,\by)=k_0\cdot z + c_0 + h(z,\by)$
for some $k_0,c_0\in\dR$ and $h(z,\by)$ has some exponential decaying rate.

Since $u$ is smooth, for any fixed $z\geq 1$, we have the fiber-wise $L^2$-expansion of $u$ as follows,
 \begin{equation}
 u(z,\by)=\sum\limits_{k=1}^{\infty}u_{k}(z)\cdot\vf_{k}(\by),\label{e:u-phi}
 \end{equation}
where $\by\in Y^{2n-1}$ and $u_{k}$ satisfies the equation 
\begin{equation}
\frac{d^2 u_k(z)}{dz^2}-(\frac{j_k^2n^2}{4}\cdot z^{n}+n\lambda_k)z^{n-2}u_k(z)=0,\ z\geq 1,
\end{equation}
for some $j_k\in\dN$
and $\lambda_k\geq0$. 
Notice that the expansion \eqref{e:u-phi}  converges in the $C^{\infty}$-topology. This follows from Lemma \ref{l:error-estimate}, Lemma \ref{l:eigenfunction-bound} and the Weyl law for spectrum asymptotics. 

For $k=0$ we have $j_k=\lambda_k=0$, and $u_k$ is a linear function of the form $\kappa_0\cdot z+c_0$. For $k\geq 1$, we can write $u_k$ as a linear combination of the two linearly independent solutions discussed in Section \ref{s:j=0} and \ref{s:jk not zero}. 
\begin{equation}
u_k(z)= C_k \cdot \mathcal{D}_k(z) + C_k^* \cdot  \mathcal{G}_k(z),
\end{equation}
where $\mathcal G_k$ is a growing  and $\mathcal D_k$ is decaying.  

We claim $C_k^*=0$ for all $k\in\dZ_+$. To see this, we apply Lemma \ref{l:error-estimate} to $u(z, \by)$, then for all $k\in\dZ_+$
\begin{equation}
|u_k(z)|=O(e^{\delta z^{\frac{n}{2}}}). 
\end{equation}
So the claim follows from the asymptotics of $\mathcal G_k(z)$ in Lemma \ref{l:j=0-grow-asymp} and \ref{l:generic-asymp} which corresponds to $j_k=0$ and $j_k\in\dZ_+$ respectively.

Now we define
\begin{equation}
h(z,\by) \equiv u(z,\by) - (\kappa_0\cdot z + c_0)=\sum_{k= 1}^{\infty} u_k(z)\cdot \vf_k(\by). 
\end{equation}
It suffices to show $h(z, \by)$ decays at the desired rate. Let $z_0>(2\delta_b)^{2/n}$  be sufficiently big so that $u$ is defined on $\{z\geq z_0\}$. 
Now we fix $K_0\equiv 2n+1$. 
Applying Lemma \ref{l:error-estimate} to $u(z_0, \by)$ we get  for all $k\in\dZ_+$,
\begin{equation}
|u_k(z_0)|\leq C_1 (\Lambda_k)^{-K_0}.
\end{equation}
We  separate in several cases. 
First, we consider $k\in\dZ_+$ with $j_k=0$.
Applying \eqref{e:Dk lower and upper bound}, then for any $\epsilon>0$ with $\ldel=(1-\epsilon)\delta_b<\delta_b $, if  $z\geq \frac{1}{\epsilon^{\frac{2}{n}}}\cdot z_0$,  
\begin{equation}
\Big|\frac{u_k(z)}{u_k(z_0)}\Big|=\Big|\frac{\mathcal{D}_k(z)}{\mathcal{D}_k(z_0)}\Big|\leq C e^{-2\sqrt{\frac{\lambda_k}{n}}\cdot (z^{\frac{n}{2}}-z_0^{\frac{n}{2}})}\leq C e^{-(1-\epsilon)\delta_b \cdot z^{\frac{n}{2}}}=C e^{-\ldel \cdot z^{\frac{n}{2}}}.\end{equation}
This implies that \begin{eqnarray}
\Big|\sum_{\substack{k>0\\ j_k=0}}\frac{u_k(z)}{u_k(z_0)}\cdot u_k(z_0)\cdot \vf_{k}(\by)\Big|
&\leq& 
\sum_{\substack{k>0\\ j_k=0}}\Big|\frac{u_k(z)}{u_k(z_0)}\Big| \cdot |u_k(z_0) |\cdot |\vf_{k}(\by)| 
\nonumber\\
&\leq& C
e^{-\delta_b\cdot z^{\frac{n}{2}}}\cdot\sum_{\substack{k>0\\ j_k=0}} \frac{1}{(\Lambda_{k})^{K_0-\frac{n}{2}}},
\end{eqnarray}
where the eigenfunction estimate
\begin{equation}
\|\varphi_k\|_{L^{\infty}(Y^{2n-1})} \leq C\cdot (\Lambda_k)^{\frac{n}{2}}.
\end{equation}
 follows from Lemma \ref{l:eigenfunction-bound}.

When $j_k\in\dZ_+$ we divide into two cases. When $Q\geq 1$ we  apply Corollary \ref{c:Dk upper and lower bound j not zero case} and Lemma \ref{l:monotonicity} (with $\eta=2\delta_b$) to get 

\begin{eqnarray}
\Big|\sum_{\substack{ j_k\geq 1\\ Q\geq 1}}\frac{u_k(z)}{u_k(z_0)}\cdot u_k(z_0)\cdot \vf_{k}(\by)\Big| 
& \leq & 
\sum_{\substack{ j_k\geq 1\\ Q\geq 1}}\Big|\frac{u_k(z)}{u_k(z_0)}\Big| \cdot |u_k(z_0)| \cdot |\vf_{k}(\by) | 
\nonumber\\
&\leq & Ce^{-\delta_b \cdot z^{\frac{n}{2}}}\sum_{\substack{ j_k\geq 1\\ Q\geq 1}} \frac{1}{(\Lambda_k)^{K_0-\frac{n}{2}-1}}.
\end{eqnarray}
Now when $Q\leq 1$ we apply instead Corollary \ref{c:Dk upper and lower bound Q small case} to get 
\begin{eqnarray}
\Big|\sum_{\substack{ j_k\geq 1\\ Q\leq 1}}\frac{u_k(z)}{u_k(z_0)}\cdot u_k(z_0)\cdot \vf_{k}(\by)\Big| 
& \leq & 
\sum_{\substack{j_k\geq 1\\ Q\leq 1}}\Big|\frac{u_k(z)}{u_k(z_0)}\Big| \cdot |u_k(z_0)| \cdot |\vf_{k}(\by) | 
\nonumber\\
&\leq & Ce^{- \frac{z^n}{2}}\sum_{\substack{ j_k\geq 1\\ Q\leq 1}} \frac{1}{(\Lambda_k)^{K_0-\frac{n}{2}-1}}.
\end{eqnarray}
Summing up all the above we get
\begin{equation}
|h(z, \by)|\leq Ce^{-\delta_b \cdot z^{\frac{n}{2}}} \sum_{k=1}^\infty \frac{1}{(\Lambda_k)^{K_0-\frac{n}{2}-1}}. 
\end{equation}
Since $K_0=2n+1$ we see the series converges. So the proof of the first step is done.

The second step is to prove the higher decaying estimate for the error function $h(z,\by)$, which follows from the uniform Schauder estimate. 
We have proved that the error function $h(z,\by)$ as a harmonic function satisfies
\begin{equation}
|h(z,\by)| \leq C_0 \cdot e^{-\ldel\cdot z^{\frac{n}{2}}}.
\end{equation}
By explicit and straightforward computations, a Calabi space $(\Ca, g_{\Ca})$ is collapsing  with bounded curvatures as $z\to+\infty$. We just lift the harmonic function $h$ to the local universal cover which is non-collapsed with uniformly bounded geometry.  So the following Schauder estimate holds for any $k\in \dZ_+$ and $\alpha\in(0,1)$ on the local universal cover,
\begin{equation}
|h|_{C^{k,\alpha}(B_{r_0}(\bx))} \leq C_k\cdot |h|_{C^0(B_{2r_0}(\bx))} \leq C_k\cdot e^{-\ldel\cdot z^{\frac{n}{2}}}.
\end{equation}
where $r_0>0$
 is some fixed constant of some definite size which is independent of $\bx\in\Ca$.
 In particular, at the center $\bx=(z,\by)$, we have
 \begin{equation}
|\nabla^k h(z,\by)| \leq C_k\cdot |h|_{C^0(B_{2r_0}(\bx))} \leq C_k\cdot e^{-\ldel\cdot z^{\frac{n}{2}}}.
\end{equation}
This completes the proof of \eqref{e:harmonic-error-higher-estimate}. 
\end{proof}

The above proposition
has the following  corollaries stating the Liouville type results on the incomplete Calabi space under both Neumann and Dirichlet boundary conditions. 
\begin{corollary}[Neumann boundary]\label{c:Liouville-neumann}
Let $(\Ca, g_{\Ca})$ be an incomplete Calabi model space based over a compact Calabi-Yau manifold $(D,\omega_D)$ and the natural moment map coordinate $z$ as in \eqref{e:moment-map-definition}, so that $\Ca$ is diffeomorphic to a topological product $[z_0,+\infty)\times Y^{2n-1}$ for some $z_0>0$ under the moment map coordinate $z$, where $Y^{2n-1}$ is a circle bundle over $D$. 
 Let $u$ be a solution of the Neumann boundary problem \begin{align}
\begin{cases}
\Delta_{g_{\Ca}} u(\bx) = 0, & \bx\in\Ca,
\\
\frac{\p u}{\p z}(\bx) = \kappa_0, & z(\bx)= z_0,
\end{cases}
\end{align}
where $\kappa_0\in\dR$.
If $u$ satisfies the growth condition $|u(\bx)| = O(e^{\delta\cdot z(\bx)^{\frac{n}{2}}})$ for some  $0<\delta<2(\frac{\lambda_D}{n})^{\frac{1}{2}}$, 
then  there is some $\ell_0\in\dR$ such that   i.e., $u=\kappa_0\cdot z + \ell_0$ on $\Ca$. In particular, $u$ is constant on $\Ca$ when $\kappa_0=0$.

\end{corollary}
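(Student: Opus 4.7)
My plan is to combine the asymptotic decomposition of Proposition \ref{p:harmonic-function-decay} with the Neumann data at $z=z_0$, and then kill all non-trivial Fourier modes of $u$ along the cross-section $Y^{2n-1}$ via a positivity-convexity argument for the decaying fundamental solutions $\mathcal{D}_k$. The growth hypothesis $\delta<\delta_b$ matches that of Proposition \ref{p:harmonic-function-decay}, whose proof applies verbatim here (separation of variables is carried out on the asymptotic region $[z_0,+\infty)\times Y^{2n-1}$ where $u$ is defined), so I first write
\begin{equation*}
u(z,\by)= A\cdot z + B + h(z,\by),
\end{equation*}
with $A,B\in\dR$ and $h(z,\by)=\sum_{k\geq 1} C_k\,\mathcal{D}_k(z)\vf_k(\by)$; the growing modes $\mathcal{G}_k$ are ruled out by the growth bound, and one has $|\nabla^j h(z,\by)|\leq C_j e^{-\ldel z^{n/2}}$ for every $j\in\dN$ and every $\ldel\in(0,\delta_b)$, which in particular lets me differentiate the series termwise.

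Next, I project the Neumann condition $\p_z u(z_0,\by)=\kappa_0$ onto the $L^2$-basis $\{\vf_k\}_{k\geq 0}$. Since $\vf_k \perp 1$ in $L^2(Y^{2n-1})$ for every $k\geq 1$, integrating $\p_z u(z_0,\cdot)$ over $Y^{2n-1}$ and using smooth convergence to swap the derivative with the integral gives $A\cdot \mathrm{Vol}(Y^{2n-1}) = \kappa_0 \cdot \mathrm{Vol}(Y^{2n-1})$, i.e.\ $A=\kappa_0$. The Neumann condition then reduces to the homogeneous statement $\p_z h(z_0,\by)=0$, whose projection onto $\vf_k$ for $k\geq 1$ yields
\begin{equation*}
C_k\cdot \mathcal{D}_k'(z_0)=0 \quad \text{for every } k\geq 1.
\end{equation*}

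Step three is to show $\mathcal{D}_k'(z_0)\neq 0$ for every $k\geq 1$. Each $\mathcal{D}_k$ is strictly positive on $(0,+\infty)$: for $j_k=0$ this is the standard positivity $K_{1/n}>0$, and for $j_k\geq 1$ it follows from the integral formula \eqref{e:def-Tri}, whose integrand $e^{yt}\,t^{\alpha-\beta-1}(1+t)^{\beta-1}$ is pointwise positive because $\alpha-\beta-1\geq -1/n > -1$ and $\beta\leq 0$ (see \eqref{e:beta-non-positive}). The homogeneous ODE \eqref{e:homogeneous-ODE} then reads
\begin{equation*}
\mathcal{D}_k''(z)=\Big(\tfrac{j_k^2 n^2}{4} z^n + n\lambda_k\Big) z^{n-2}\,\mathcal{D}_k(z) > 0,
\end{equation*}
using that $\lambda_k>0$ whenever $j_k=0$ and $k\geq 1$, so $\mathcal{D}_k$ is strictly convex on $(0,+\infty)$. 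A strictly positive, strictly convex $C^2$ function on $[z_0,+\infty)$ which tends to $0$ at $+\infty$ (as $\mathcal{D}_k$ does by Lemmas \ref{l:j=0-grow-asymp} and \ref{l:generic-asymp}) must be strictly decreasing everywhere: if $\mathcal{D}_k'(z_*)\geq 0$, then convexity forces $\mathcal{D}_k'(z)>0$ for $z>z_*$, giving $\mathcal{D}_k(z)>\mathcal{D}_k(z_*)>0$, contradicting $\mathcal{D}_k\to 0$. Therefore $\mathcal{D}_k'(z_0)<0$, so $C_k=0$ for all $k\geq 1$, $h\equiv 0$, and $u=\kappa_0 z+\ell_0$ with $\ell_0:=B$.

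The main obstacle is to avoid a resonance at the artificial boundary $z=z_0$: a priori one might fear that $\mathcal{D}_k'(z_0)$ vanishes for some $k$, leaving Step two under-determined. The convexity argument circumvents this without invoking an energy identity with decaying boundary terms, and relies only on the structural sign information ($\lambda_k>0$ when $j_k=0$ and $k\geq 1$, together with \eqref{e:beta-non-positive}) already built into the set-up of Sections \ref{s:j=0}--\ref{s:jk not zero}; the same line of reasoning will likewise handle the Dirichlet case of Theorem \ref{t:Liouville-Calabi-Dirichlet}, using $\mathcal{D}_k(z_0)>0$ in place of $\mathcal{D}_k'(z_0)<0$.
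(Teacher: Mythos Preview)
Your proof is correct and follows essentially the same route as the paper: invoke Proposition \ref{p:harmonic-function-decay} to obtain the expansion $u=Az+B+\sum_{k\geq 1}C_k\mathcal{D}_k(z)\vf_k(\by)$, identify $A=\kappa_0$ by integrating the Neumann data over $Y^{2n-1}$, project onto $\vf_k$ to get $C_k\mathcal{D}_k'(z_0)=0$, and then show $\mathcal{D}_k'(z_0)<0$ via the convexity of $\mathcal{D}_k$ (from the ODE) together with positivity and decay at infinity. Your extra justification of the positivity of $\mathcal{D}_k$ in the $j_k\geq 1$ case (via the sign of the integrand in \eqref{e:def-Tri}) is a nice addition that the paper leaves implicit.
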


\begin{remark}
This corollary is used in the bubbling analysis of the incomplete Calabi-Yau metrics in \cite{SZ} (See the proof of Proposition 5.12 in \cite{SZ}). \end{remark}

\begin{proof}

If $\delta>0$ satisfies $\delta\in(0,\delta_b)$ with $\delta_b$ defined in \eqref{e:definition of deltab}, then it directly follows from  Proposition \ref{p:harmonic-function-decay} and the proof that, the harmonic function $u$ has the expansion 
\begin{equation}
u(z,\by) = \kappa  \cdot z
+ \ell_0 + \sum\limits_{k=1}^{\infty} c_k \cdot \mathcal{D}_k(z)\cdot\vf_k(\by),  
\end{equation}
where the positive functions  $\mathcal{D}_k(z)$ are defined by \eqref{e:fundamental-solution-bessel-type} and \eqref{e:generic-fundamental-solutions} depending upon the Fourier modes which solve
\begin{align}
\frac{d^2 \mathcal{D}_k(z)}{dz^2}-\Big(\frac{j_k^2n^2}{4}\cdot z^{n}+n\lambda_k\Big)\cdot z^{n-2}\cdot \mathcal{D}_k(z)=0,\ z\geq 1,\label{e:D_k-equation}\end{align}
  where $\lambda_k\geq 0$ for every $k\in\dZ_+$ (see \eqref{e:lambda_k-lower-bound}). Moreover, each $\mathcal{D}_k(z)$ yields some definite exponentially decaying rate (see Lemma \ref{l:j=0-grow-asymp} and Lemma \ref{l:generic-asymp} for the accurate rates). 

First, we prove $\kappa = \kappa_0$. 
In fact, 
\begin{equation}
\frac{\p u(z,\by)}{\p z} = \kappa + \sum\limits_{k=1}^{\infty} c_k \cdot \mathcal{D}_k'(z)\cdot\vf_k(\by).  \label{e:partial-derivative}
\end{equation}
Integrating \eqref{e:partial-derivative} over $(Y^{2n-1},h_0)$ and evaluating at $z=z_0$, 
\begin{equation}
\kappa\cdot\Vol_{h_0}(Y^{2n-1})=\int_{Y^{2n-1}}\Big(\frac{\p u(z,\by)}{\p z}\Big|_{z=z_0}\Big)\dvol_{h_0} = \kappa_0\cdot \Vol_{h_0}(Y^{2n-1}),
\end{equation}
which implies 
$\kappa=\kappa_0$.

Next, we prove $c_k=0$ for all $k\in\dZ_+$. In fact, for each fixed $k\in\dZ_+$, multiplying $\vf_k$ on the both sides of \eqref{e:partial-derivative} and integrating over $Y^{2n-1}$,
\begin{equation}  c_k \cdot \mathcal{D}_k'(z)=\int_{Y^{2n-1}}\vf_k(\by)\cdot \frac{\p u(z,\by)}{\p z}\Big|_{z=z_0}=0. 
\end{equation}
Then the conclusion $c_k=0$ for every $k\in\dZ_+$ follows from the claim 
\begin{equation}
\mathcal{D}_k'(z)<0 \ \text{for all} \ z \geq z_0.
\end{equation}
Now we just need to prove the claim.
Since $\mathcal{D}_k$ satisfies \eqref{e:D_k-equation} and noticing
 $\lambda_k>0$ for every $k\in\dZ_+$, we have that 
$\mathcal{D}_k''(z)>0$ in $[z_0,+\infty)$. Then $\mathcal{D}_k'(z)$ is increasing in $[z_0,+\infty)$. The decay $\lim\limits_{z\to+\infty}\mathcal{D}_k(z)= 0$ implies $\lim\limits_{z\to+\infty}\mathcal{D}_k'(z)=0$, and hence $\mathcal{D}_k'(z)<0$ in $[z_0,+\infty)$.

The above arguments imply that  $u(z,\by)\equiv\kappa_0\cdot z + \ell_0$ on $\Ca$.  The proof is done. 
\end{proof}

\begin{corollary}[Dirichlet boundary]
\label{c:Liouville-dirichlet}
In the above notations, let $u$ be a solution of the Dirichlet boundary problem on the Calabi space $(\Ca, g_{\Ca})$ based over a compact Calabi-Yau manifold $(D,\omega_D)$,
\begin{align}
\begin{cases}
\Delta_{g_{\Ca}} u(\bx) = 0, & \bx\in\Ca,
\\
u(\bx) = 0, & z(\bx)= z_0.
\end{cases}
\end{align}
If $u$ satisfies the growth condition $|u(\bx)| = O(e^{\delta\cdot z(\bx)^{\frac{n}{2}}})$ for some  $0<\delta<2(\frac{\lambda_D}{n})^{\frac{1}{2}}$, 
then $u$ must be a linear function on $\Ca$, i.e., there exists a constant $\kappa_0\in\dR$ such that  $u = \kappa_0 \cdot (z-z_0)$ in terms of the natural moment map coordinate $z$ on $\Ca$.

\end{corollary}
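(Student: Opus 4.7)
The plan is to parallel the proof of Corollary \ref{c:Liouville-neumann} but now exploit the Dirichlet condition at the fiber level. Since $0<\delta<\delta_b=2(\lambda_D/n)^{1/2}$, Proposition \ref{p:harmonic-function-decay} (and its proof) applies to $u$ on $\{z\geq z_0\}$, yielding a fiberwise $L^2$ expansion
\begin{equation}
u(z,\by)=\kappa\cdot z+\ell_0+\sum_{k=1}^{\infty} c_k\cdot \mathcal{D}_k(z)\cdot \vf_k(\by),
\end{equation}
where the $\mathcal{D}_k(z)$ are the decaying fundamental solutions from \eqref{e:fundamental-solution-bessel-type} and \eqref{e:generic-fundamental-solutions}, and the series converges in $C^\infty$ by Lemma \ref{l:eigenfunction-bound} and Lemma \ref{l:error-estimate}.

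Next, I would read off the two conditions imposed by the Dirichlet boundary data $u(z_0,\by)=0$. Integrating this identity against the constant eigenfunction $\vf_0$ over $(Y^{2n-1},h_0)$ immediately gives $\kappa\cdot z_0+\ell_0=0$, i.e. $\ell_0=-\kappa\cdot z_0$. Then, for each $k\in\dZ_+$, multiplying by $\bar\vf_k$ and integrating over $Y^{2n-1}$ and using the $L^2$-orthonormality of $\{\vf_k\}$ produces
\begin{equation}
c_k\cdot \mathcal{D}_k(z_0)=0.
\end{equation}

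The key step is therefore to verify $\mathcal{D}_k(z_0)\neq 0$ for every $k\in\dZ_+$, so that each $c_k$ must vanish. For the zero-mode case $j_k=0$ we have $\mathcal{D}_k(z_0)=z_0^{1/2}K_{1/n}(2\sqrt{\lambda_k/n}\cdot z_0^{n/2})$, and positivity of $K_\nu(y)$ for $y>0$ is immediate from the integral representation $K_\nu(y)=\int_0^\infty e^{-y\cosh t}\cosh(\nu t)\,dt$ used in Section \ref{s:j=0}. For the nonzero mode case $j_k\neq 0$, positivity of $\mathcal{D}_k(z_0)=e^{j_k z_0^n/2}\Tri(\beta,\alpha,-j_k z_0^n)$ follows from the integral representation \eqref{e:def-Tri}, whose integrand $e^{yt}t^{\alpha-\beta-1}(1+t)^{\beta-1}$ is strictly positive on $(0,\infty)$ (noting $\alpha-\beta-1\geq -\tfrac{1}{n}>-1$, so the integral converges near $0$). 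Hence $c_k=0$ for all $k\in\dZ_+$.

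Combining these two observations we conclude $u(z,\by)=\kappa\cdot(z-z_0)$ on $\Ca$, which is exactly the claimed form with $\kappa_0=\kappa$. The only genuinely new ingredient relative to the Neumann proof is the positivity of $\mathcal{D}_k$ at the single point $z_0$, which as noted above is immediate from the integral formulas; the rest is just orthogonality and the expansion from Proposition \ref{p:harmonic-function-decay}. No Bochner argument or maximum principle is needed in this incomplete setting, because the decay of $\mathcal{D}_k$ at infinity together with the Fourier-side boundary condition already forces each mode to vanish.
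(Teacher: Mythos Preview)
Your proposal is correct and follows essentially the same approach as the paper: apply Proposition \ref{p:harmonic-function-decay} to obtain the expansion, then use orthogonality of the $\vf_k$ together with the Dirichlet condition at $z=z_0$ to kill each mode, the only point being that $\mathcal{D}_k(z_0)>0$. The paper merely says the proof is similar to Corollary \ref{c:Liouville-neumann} and omits the details; you have supplied exactly those details, including the clean justification of positivity of $\mathcal{D}_k(z_0)$ via the integral representations of $K_\nu$ and $\Tri$.
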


\begin{proof}
The proof follows quickly from Proposition \ref{p:harmonic-function-decay}, and the exponentially decaying terms are vanishing due to the Dirichlet boundary condition, which is similar to the proof of Corollary \ref{c:Liouville-neumann}. So we omit the details.
\end{proof}

\section{The Poisson equation with prescribed asymptotics}
\label{s:Poisson with asymptotics}

In this subsection, we will construct solutions to the Poisson equation on the Calabi space $(\Ca, g_{\Ca})$,
\begin{equation}
\Delta_{g_{\Ca}} u = v
\end{equation}
with {\it controlled asymptotic behavior}.
As in Section \ref{s:ode-setup}, we carry out separation of variables. Suppose $v$ is a smooth function defined on $\{z\geq 1\}$. We write 
\begin{align}
u(z,\by) = \sum\limits_{k=1}^{\infty} u_k (z) \cdot \vf_k (\by),
\quad 
 v(z, \by) = \sum\limits_{k=1}^{\infty} \xi_k (z) \cdot \vf_k (\by).
\end{align}
So the Poisson equation
\begin{equation}
\Delta_{g_{\Ca}} u = v
\end{equation}
is reduced to the following inhomogeneous ODE
\begin{equation}
\frac{d^2 u_k(z)}{dz^2}-(\frac{j_k^2n^2}{4}\cdot z^{n}+n\lambda_k)z^{n-2}u_k(z)=z^{n-1}\cdot \xi_k(z),\quad  z\geq z_1.\label{e:poisson-ode}\end{equation}
Let $\mathcal{G}_k(z)$ and $\mathcal{D}_k(z)$ be 
the growing solution and decaying solution to the corresponding 
homogeneous equation, which were analyzed in Section \ref{s:j=0} and \ref{s:jk not zero}.
 So applying standard Liouville' formula, Equation \eqref{e:poisson-ode} 
has a particular solution

\begin{equation}
u_{k}(z)\equiv  \frac{\mathcal{G}_k(z)}{\mathcal{W}_k(z)}\int_z^{\infty}\mathcal{D}_k(r)\cdot\Big(\xi_{k}(r)\cdot r^{n-1}\Big)dr+\frac{\mathcal{D}_k(z)}{\mathcal{W}_k(z)}\int_{z_1}^{z}\mathcal{G}_k(r)\cdot\Big(\xi_{k}(r)\cdot r^{n-1}\Big)dr , \label{e:particular-solution}\end{equation}
where $\mathcal{W}_k$ is the Wronskian
\begin{equation}
\mathcal{W}_k(z) \equiv \mathcal{W}\Big(\mathcal{G}_k(z),\mathcal{D}_k(z)\Big).\end{equation}

\begin{lemma}\label{l:coefficients-estimate}

Assume that the function $\xi_k(z)$ satisfies the following property:
 there are $\eta_0\in(-\delta_b/2,\delta_b/2)$, a sequence of positive constants $\mathfrak{B}_k>0$ such that 
\begin{equation}
|\xi_{k}(z)| \leq \mathfrak{B}_k \cdot  e^{\eta_0\cdot z^{\frac{n}{2}}}.\label{e:xi-bound}
\end{equation}
Let $u_{k}(z)$ be the particular solution \eqref{e:particular-solution}, then there exists some constant $C_0>0$ such that the particular solution $u_k$ satisfies the uniform estimate 
\begin{equation}
|u_{k}(z)| \leq C_0 \cdot \mathfrak{B}_k\cdot (\Lambda_k)^{\frac{1}{2n}}\cdot e^{\eta\cdot z^{\frac{n}{2}}} \end{equation}
 for any $\eta>\eta_0$.
\end{lemma}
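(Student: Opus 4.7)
The plan is to analyze the two integral terms in the Liouville formula separately, and split the argument according to whether the Fourier mode index satisfies $j_k=0$ or $j_k\geq 1$. In each regime, the uniform pointwise bounds on $\mathcal{G}_k,\mathcal{D}_k$ from Corollaries \ref{p:j=0 uniform estimate for G and D}, \ref{c:Dk upper and lower bound j not zero case}, \ref{c:Dk upper and lower bound Q small case}, together with the Wronskian formulas in Corollary \ref{l:j=0-Wronskian} and Proposition \ref{p:generic-Wronskian}, should reduce everything to estimating exponential integrals of the form
\begin{equation}
\int_z^{\infty} r^{p}\, e^{-\mu r^{n/2}}\,dr \quad\text{and}\quad \int_{z_1}^{z} r^{p}\, e^{\mu r^{n/2}}\,dr
\end{equation}
for appropriate exponents $p$ and rates $\mu>0$. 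The substitution $s=r^{n/2}$ converts $r^{n-1}dr$ into $\tfrac{2}{n}s\,ds$, after which both integrals can be bounded by $C\mu^{-1} z^{p-n/2+1}\,e^{\mp\mu z^{n/2}}$ up to a harmless polynomial in $z$.

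In the zero mode case $j_k=0$, I would plug in $\mathcal{G}_k(r)\lesssim \lambda_k^{-1/4}r^{(2-n)/4}e^{2\sqrt{\lambda_k/n}\,r^{n/2}}$ and $\mathcal{D}_k(r)\lesssim \lambda_k^{-1/4}r^{(2-n)/4}e^{-2\sqrt{\lambda_k/n}\,r^{n/2}}$, together with $|\xi_k(r)|\leq \mathfrak{B}_k e^{\eta_0 r^{n/2}}$. The hypothesis $|\eta_0|<\delta_b/2=\sqrt{\lambda_D/n}$ combined with $\lambda_k\geq \lambda_D$ gives $2\sqrt{\lambda_k/n}\pm\eta_0\geq \sqrt{\lambda_D/n}>0$, so both exponential integrals converge with rate uniformly bounded below. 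After multiplying by $\mathcal{G}_k(z)/\mathcal{W}_k$ and $\mathcal{D}_k(z)/\mathcal{W}_k$ respectively, the exponentials $e^{\pm 2\sqrt{\lambda_k/n}\,z^{n/2}}$ cancel exactly with the leading terms from the integrated exponentials, leaving a factor of the form $\mathfrak{B}_k\,\lambda_k^{-1/2}\, z^{c}\, e^{\eta_0 z^{n/2}}$ for some fixed $c$. The polynomial $z^c$ is absorbed into $e^{(\eta-\eta_0)z^{n/2}}$ for any $\eta>\eta_0$, and since $\lambda_k^{-1/2}\leq (\Lambda_k)^{1/(2n)}$, the desired bound follows.

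In the nonzero mode case $j_k\geq 1$, the analogous argument uses the asymptotic bounds from Corollaries \ref{c:Dk upper and lower bound j not zero case} and \ref{c:Dk upper and lower bound Q small case}, Wronskian $\mathcal{W}_k\sim \Gamma(\alpha-1)\Gamma(\alpha-\beta)^{-1}j_k^{1/n}$, and the monotonicity of $\widehat F(z)+\eta z^{n/2}$ and $\widehat G(z)-\eta z^{n/2}$ from Lemma \ref{l:monotonicity} (applied with $\eta=\eta_0$, which is admissible once $z\geq \eta_0^{2/n}$). Since $j_k r^n/2$ grows much faster than $\eta_0 r^{n/2}$, the integrals converge robustly and the monotonicity makes the estimates reduce to evaluating the integrands at the endpoint $r=z$. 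When combined with the Wronskian, the key product estimate in Proposition \ref{l:product-estimate} produces a bound of the form $\mathfrak{B}_k \cdot j_k^{1/n}\cdot z^{c'}\cdot e^{\eta_0 z^{n/2}}$, up to constants depending only on $n$. Absorbing $z^{c'}$ into $e^{(\eta-\eta_0)z^{n/2}}$ and recognizing $j_k^{1/n}\lesssim (\Lambda_k)^{1/(2n)}$ from \eqref{e:definition of Lambdak}, the conclusion follows.

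The main technical obstacle I anticipate is keeping the polynomial dependence on $\Lambda_k$ uniformly below $(\Lambda_k)^{1/(2n)}$ across both regimes of $Q$ in the $j_k\geq 1$ analysis; the case $Q\leq 1$ needs the sharper bounds from Corollary \ref{c:Dk upper and lower bound Q small case} while $Q\geq 1$ needs Laplace-method bounds and the monotonicity lemma to convert the $e^{F(t_0)+G(u_0)}$ product into manageable form. A secondary issue is ensuring the lower bound on the Wronskian is nontrivial (it is, by Proposition \ref{p:generic-Wronskian}, since $\Gamma(\alpha-1)$ is a fixed nonzero constant), so that the division by $\mathcal{W}_k$ contributes at worst $j_k^{-1/n}\Gamma(\alpha-\beta)$, which cancels the $\Gamma(\alpha-\beta)^{-1}$ appearing in the pointwise bounds for $\mathcal{G}_k$ and $\mathcal{D}_k$.
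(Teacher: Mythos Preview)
Your proposal is correct and follows essentially the same approach as the paper: split into the cases $j_k=0$ and $j_k\geq 1$ (with the latter further subdivided into $Q\leq 1$ and $Q\geq 1$), apply the uniform two-sided bounds on $\mathcal{G}_k,\mathcal{D}_k$ together with the Wronskian formulas, use the monotonicity lemma and the product estimate $e^{F(t_0)+G(u_0)}$ in the $Q\geq 1$ regime, and absorb leftover polynomial factors into the exponential gap $e^{(\eta-\eta_0)z^{n/2}}$. The only omission is the trivial $k=0$ mode (where $\lambda_k=j_k=0$ and $u_0$ is given by direct integration), and the inequality ``$\lambda_k^{-1/2}\leq(\Lambda_k)^{1/(2n)}$'' should be read up to a constant depending on $\lambda_D$ and $z_0$; neither affects the argument.
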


\begin{proof}

We will estimate the two terms in \eqref{e:particular-solution} individually, and we also divide into several cases.

First consider $j_k=0$ and $k=0$. In this case the solutions $u_k$ is given by simple integrals of $\xi_k$ and the conclusion is easy to see. 

The second case is that $k\in\dZ_+$ and $j_k=0$. Applying Proposition \ref{p:bessel-functions-estimate}, the fundamental solutions $\mathcal{G}_k(z)$ and $\mathcal{D}_k(z)$ satisfy the uniform estimates
\begin{align}
\mathcal{G}_k(z) & \leq \frac{C}{\lambda_k^{\frac{1}{4}}}\cdot z^{\frac{2-n}{4}} \cdot e^{2\sqrt{\frac{\lambda_k}{n}}\cdot z^{\frac{n}{2}}},
\\
\mathcal{D}_k(z) & \leq  \frac{C}{\lambda_k^{\frac{1}{4}}}\cdot z^{\frac{2-n}{4}} \cdot e^{-2\sqrt{\frac{\lambda_k}{n}}\cdot z^{\frac{n}{2}}}. 
\end{align}
By Lemma \ref{l:j=0-Wronskian},  $\mathcal{W}_k(z)=\mathcal{W}(\mathcal{G}_k(z),\mathcal{D}_k(z))=\frac{n}{2}$. 
Let us denote $\tilde{\lambda}_k\equiv2\sqrt{\frac{\lambda_k}{n}}$, then $\tilde{\lambda}_k\geq 2\sqrt{\frac{\lambda_1}{n}} = \delta_b$. Now the first integral term in \eqref{e:particular-solution} has the following bound, 
\begin{eqnarray}
&& \frac{\mathcal{G}_k(z)}{\mathcal{W}_k(z)} \int_{z}^{\infty} \mathcal{D}_k(r) |\xi_k(r)\cdot r^{n-1}|dr 
\nonumber\\
&\leq& \frac{C\cdot \mathfrak{B}_k}{\lambda_k^{\frac{1}{2}}}\cdot z^{\frac{2-n}{4}}\cdot e^{\tlk\cdot z^{\frac{n}{2}}}\cdot \int_{z}^{\infty} r^{\frac{3n}{4}-\frac{1}{2}}\cdot e^{(-\tlk+\eta_0)\cdot r^{\frac{n}{2}}}dr.
\end{eqnarray}
By assumption, $|\eta_0|< \frac{\delta_b}{2} \leq \frac{\tilde{\lambda}_k}{2} $, then 
\begin{eqnarray}
\nonumber\\
\frac{\mathcal{G}_k(z)}{\mathcal{W}_k(z)} \int_{z}^{\infty} \mathcal{D}_k(r) |\xi_k(r)\cdot r^{n-1}|dr 
 &\leq& \frac{C\cdot \mathfrak{B}_k}{\lambda_k^{\frac{1}{2}}}\cdot z^{\frac{2-n}{4}}\cdot e^{\tlk\cdot z^{\frac{n}{2}}}\cdot e^{(-\tlk+\eta')\cdot z^{\frac{n}{2}}} 
\nonumber\\
&\leq & C \cdot \mathfrak{B}_k\cdot e^{\eta\cdot z^{\frac{n}{2}}}, \end{eqnarray}
where $\eta>\eta'>\eta_0>0$.
Similarly, 
\begin{eqnarray}
\frac{\mathcal{D}_k(z)}{\mathcal{W}_k(z)} \int_{z_0}^{z} \mathcal{G}_k(r) |\xi_k(r)\cdot r^{n-1}|dr
\leq   C \cdot \mathfrak{B}_k \cdot  e^{\eta\cdot z^{\frac{n}{2}}}.
\end{eqnarray}

In the third case $j_k\in\dZ_+$ and  $Q\geq 1$, we need to apply Lemma \ref{l:monotonicity}. In fact, 
 \begin{eqnarray}
&&\frac{\mathcal{G}_k(z)}{\mathcal{W}_k(z)} \int_{z}^{\infty} \mathcal{D}_k(r) \xi_k(r)\cdot r^{n-1}dr
 \nonumber \\
&\leq&C_n\cdot \frac{Q^{\frac{1}{4}}\cdot(j_k\cdot z^n)^{\frac{1-2\alpha}{4}}}{\Gamma^2(Q+1)}\cdot \frac{e^{\widehat{G}_k(z)}}{\mathcal{W}_k(z)}\int_z^{\infty}e^{\widehat{F}_k(r)} \xi_k(r)\cdot r^{n-1} dr
\nonumber\\
&\leq& C_n\cdot\mathfrak{B}_k\cdot\frac{Q^{\frac{1}{4}}\cdot(j_k\cdot z^n)^{\frac{1-2\alpha}{4}}}{\Gamma^2(Q+1)}\cdot\frac{e^{\widehat{G}_k(z)}}{\mathcal{W}_k(z)}\int_z^{\infty}e^{\widehat{F}_k(r)+\eta' \cdot r^{\frac{n}{2}}} dr,
\end{eqnarray}
where $\eta'>\eta_0$. We choose  any $\epsilon\in( \delta_b/100,\delta_b/10)$ and denote $\eta'' \equiv \eta' + \epsilon$, then
by Lemma  \ref{l:monotonicity},
\begin{eqnarray}
&&\frac{e^{\widehat{G}_k(z)}}{\mathcal{W}_k(z)}\int_z^{\infty}e^{\widehat{F}_k(r)+\eta' \cdot r^{\frac{n}{2}}} dr \nonumber \\
&= &\frac{e^{\widehat{G}_k(z)}}{\mathcal{W}_k(z)}\int_z^{\infty}e^{\widehat{F}_k(r)+\eta'' \cdot r^{\frac{n}{2}}} \cdot e^{-\epsilon r^{\frac{n}{2}}}dr
\nonumber\\
& \leq & \frac{e^{\widehat{F}_k(z)+\widehat{G}_k(z)+\eta ''\cdot z^{\frac{n}{2}}}}{  \mathcal{W}_k(z)}\int_z^{\infty}e^{-\epsilon \cdot r^{\frac{n}{2}}}dr 
\nonumber \\
& \leq &C_n\cdot\frac{e^{\widehat{F}_k(z)+\widehat{G}_k(z)+\eta'' \cdot z^{\frac{n}{2}}}}{\mathcal{W}_k(z)}.
\end{eqnarray}
Therefore, 
\begin{equation}
\frac{\mathcal{G}_k(z)}{\mathcal{W}_k(z)} \int_{z}^{\infty} \mathcal{D}_k(r) \xi_k(r)\cdot r^{n-1}dr
\leq C_n\cdot\mathfrak{B}_k\cdot\frac{Q^{\frac{1}{4}}\cdot (j_k\cdot z^n)^{\frac{1-2\alpha}{4}}}{\Gamma^2(Q+1)}\cdot\frac{e^{\widehat{F}_k(z)+\widehat{G}_k(z)+\eta''\cdot z^\frac{n}{2}}}{\mathcal{W}_k(z)}.
\end{equation}
Plugging Lemma \ref{l:product-estimate} and Proposition \ref{p:generic-Wronskian} into the above inequality, 
\begin{eqnarray}
\frac{\mathcal{G}_k(z)}{\mathcal{W}_k(z)} \int_{z}^{\infty} \mathcal{D}_k(r) \xi_k(r)\cdot r^{n-1}dr
&\leq& C_n \cdot \mathfrak{B}_k \cdot \frac{ j_k^{\frac{1}{n}}\cdot e^{-Q}\cdot Q^{Q+1}}{\Gamma(Q+1)}\cdot z \cdot  e^{\eta''\cdot z^{\frac{n}{2}}}.
\nonumber\\
& \leq&  C_n\cdot \mathfrak{B}_k\cdot j_k^{\frac{1}{n}}\cdot e^{\eta \cdot z^{\frac{n}{2}}}
\nonumber\\
&\leq &  C_n\cdot \mathfrak{B}_k\cdot (\Lambda_k)^{\frac{1}{2n}}\cdot e^{\eta \cdot z^{\frac{n}{2}}}
\end{eqnarray}
for any $\eta\in(\eta'',\eta''+\frac{\delta_b}{100})$, where we used Stirling's formula for estimating $\Gamma(Q+1)$.
Similarly we get the bound for the other term of \eqref{e:particular-solution}. 

The fourth case is when $j_k\geq 1$ and $Q\leq 1$. This case is simpler and follows from Corollary \ref{c:Dk upper and lower bound Q small case} and the argument in the second case.

This completes the proof of the proposition. 
\end{proof}

Based on the above ODE estimate, we 
prove the following $C^0$ and $C^1$ estimate for the equation to the Poisson equation.

\begin{proposition}\label{p:poisson-solvability}Let $\{z\geq 1\}\subset \Ca$ be a subset and let  $K_0\geq 2n+1$ be a positive integer. 
 Given any $\eta_0 \in(-\delta_b/2,\delta_b/2)\setminus
 \{0\}$,  if $v\in C^{3K_0,\alpha}(\{z\geq 1\})$ for
and 
\begin{equation}|v|=O(e^{\eta_0 \cdot z(\bx)^{\frac{n}{2}}}),\end{equation}
 then the Poisson equation
\begin{equation}
\Delta_{g_{\Ca}}u=v\label{e:possion-eq}
\end{equation}
has a solution $u\in C^{3K_0+2,\alpha}(\{z\geq 1\})$ such that for any $\eta>\eta_0$
\begin{equation}
|u(\bx)|+|\nabla_{g_{\mathcal{C}^n}} u(\bx)|_{\mathcal{C}^n}\leq C\cdot e^{\eta \cdot z^{\frac{n}{2}}},\label{e:poisson-solution-estimate}
\end{equation}
as $z(\bx)\to+\infty$, where $C>0$ is independent of $\bx\in\Ca$.

\end{proposition}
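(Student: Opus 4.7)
\textbf{Proof plan for Proposition \ref{p:poisson-solvability}.} The plan is to build $u$ via separation of variables on the cross section $Y^{2n-1}$, use the explicit variation-of-parameters formula \eqref{e:particular-solution} to produce each Fourier mode, and then show the resulting series converges in $C^1$ with the claimed growth.

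First I would fix a uniformly exponentially weighted $C^{3K_0,\alpha}$ bound on $v$, which upgrades the $C^0$ assumption $|v|=O(e^{\eta_0 z^{n/2}})$ to $|\nabla_{g_{\Ca}}^j v|=O(e^{\eta_0 z^{n/2}})$ for $j\leq 3K_0$ via an interior Schauder argument on the local universal covers (the same lifting trick used at the end of the proof of Proposition \ref{p:harmonic-function-decay}, which handles the collapsing). Expand $v(z,\by)=\sum_{k=1}^\infty \xi_k(z)\vf_k(\by)$ and, for each fixed $z$, apply Lemma \ref{l:error-estimate} along the fiber to obtain
\begin{equation}
|\xi_k(z)|\leq \frac{C\cdot |v(z,\cdot)|_{C^{2K_0}(Y^{2n-1})}}{(\Lambda_k)^{K_0}}\leq C\cdot \frac{e^{\eta_0 z^{n/2}}}{(\Lambda_k)^{K_0}},
\end{equation}
so in the language of Lemma \ref{l:coefficients-estimate} we may take $\mathfrak B_k = C/(\Lambda_k)^{K_0}$.

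Next, for each $k\geq 1$ I would define $u_k$ by the particular-solution formula \eqref{e:particular-solution}, and invoke Lemma \ref{l:coefficients-estimate} to conclude
\begin{equation}
|u_k(z)|\leq C\cdot (\Lambda_k)^{\frac{1}{2n}-K_0}\cdot e^{\eta z^{n/2}}
\end{equation}
for any $\eta>\eta_0$. For the zero mode $k=0$ (where $j_0=\lambda_0=0$), one solves the equation $u_0''=z^{n-1}\xi_0$ by double integration, which gives the same type of bound (indeed better). Formally setting $u(z,\by)=\sum_{k=0}^\infty u_k(z)\vf_k(\by)$, I would estimate pointwise using the eigenfunction bound $\|\vf_k\|_{L^\infty(Y^{2n-1})}\leq C(\Lambda_k)^{n/2}$ from Lemma \ref{l:eigenfunction-bound}:
\begin{equation}
|u(z,\by)|\leq C\cdot e^{\eta z^{n/2}}\sum_{k=1}^\infty (\Lambda_k)^{\frac{n}{2}+\frac{1}{2n}-K_0}.
\end{equation}
By Weyl's law on $Y^{2n-1}$, $\Lambda_k\sim k^{2/(2n-1)}$, so the choice $K_0\geq 2n+1$ makes the series absolutely convergent and in fact provides enough margin to differentiate term by term in the ODE (controlling the distributional Laplacian) and identify $\Delta_{g_{\Ca}}u=v$.

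Finally, the $C^1$ estimate follows from interior elliptic regularity: on the local universal cover near any point $\bx$ with $z(\bx)$ large, $g_{\Ca}$ has uniformly bounded geometry on a ball of fixed size, and $u$ satisfies the elliptic equation $\Delta u=v$ there. The standard Schauder estimate, combined with the $C^0$ bound on $u$ and the exponentially weighted bound on $v$, yields $|\nabla u(\bx)|\leq C\cdot e^{\eta z(\bx)^{n/2}}$, which descends to $\Ca$. The main obstacle is ensuring that Lemma \ref{l:coefficients-estimate} combined with the Fourier coefficient decay is \emph{sharp enough} that the polynomial loss $(\Lambda_k)^{1/(2n)}$ from the ODE estimate together with the $(\Lambda_k)^{n/2}$ growth of the eigenfunctions is outweighed by $(\Lambda_k)^{-K_0}$ and Weyl's law; the choice $K_0\geq 2n+1$ is precisely what allows this, and it also leaves enough regularity for the Schauder step giving the $C^1$ bound.
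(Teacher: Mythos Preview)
Your proposal follows essentially the same route as the paper: expand $v$ in eigenfunctions on $Y^{2n-1}$, define each $u_k$ by the variation-of-parameters formula \eqref{e:particular-solution}, invoke Lemma \ref{l:coefficients-estimate} together with the Fourier-coefficient decay of Lemma \ref{l:error-estimate} and the eigenfunction bound of Lemma \ref{l:eigenfunction-bound}, sum using Weyl's law, and finish with elliptic regularity on the local universal cover. The power counting and the use of $K_0\geq 2n+1$ match the paper exactly.

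There is one genuine slip: your first step claims that an ``interior Schauder argument on the local universal covers'' upgrades $|v|=O(e^{\eta_0 z^{n/2}})$ to weighted bounds on $|\nabla^j_{g_{\Ca}} v|$. That is not valid: Schauder estimates apply to solutions of elliptic equations, whereas $v$ here is prescribed data and satisfies no PDE; no interior estimate manufactures control on derivatives of $v$ from $|v|$ alone. What you actually need, in order to apply Lemma \ref{l:error-estimate}, is a weighted bound on the fiberwise $C^{2K_0}$-norm $|v(z,\cdot)|_{C^{2K_0}(Y^{2n-1})}$. The paper's own proof simply uses this bound without explicit justification (so the hypotheses are tacitly stronger than the bare $C^0$ growth plus local regularity); in the only application, namely the proof of Theorem \ref{t:liouville-theorem-calabi}, the required weighted $C^k$-control on $v=\Delta_{g_{\Ca}}u$ is supplied directly by Lemma \ref{l:almost-harmonic}. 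So your plan is fine once you either assume the weighted higher-order bound on $v$ as part of the hypothesis, or note that it holds in the intended application; just drop the Schauder justification. A minor secondary point: for the final gradient estimate the paper uses $W^{2,p}$ plus Sobolev rather than Schauder, which is slightly cleaner since it only needs $L^\infty$ control on $v$, but either variant works once the weighted regularity of $v$ is in hand.
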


\begin{proof}

The proof is constructive, which will be done in two steps. 

The first step, as the main part, is to find a solution $u$ with the prescribed growth (or decay) rate. 
We will use the method of separation of variables described as follows.

For a fixed slice $Y^{2n-1}\subset \Ca$, let $\{\Lambda_k\}_{k=0}^{\infty}$ with $\Lambda_0=0$ be the spectrum of $\Delta_{\Ca}$ acting on functions. Let $\{\varphi_k\}_{k=0}^{\infty}$ be the eigenfunctions satisfying
\begin{align}
\begin{cases}
-\Delta_{\Ca}\varphi_k=\Lambda_k\varphi_k,
\\
\|\varphi_k\|_{L^2(Y^{2n-1})}=1.
\end{cases}
\end{align}
Given a function $v$ and for any fixed $z\geq 1$, we have the fiberwise $L^2$-expansion on $Y^{2n-1}$,
\begin{equation}
v(z,\by) = \sum\limits_{k=1}^{\infty} v_k(z) \vf_k(\by).
\end{equation}
Then we can first construct a formal solution
\begin{equation}
u(z,\by)=\sum\limits_{k=1}^{\infty}u_{k}(z)\vf_k(\by)
\end{equation}
to \eqref{e:possion-eq}, which holds in the $L^2$-sense for each fixed $z\geq 1$. 
Here the coefficient functions $u_{k}(z)$ are the particular solutions constructed in Lemma \ref{l:coefficients-estimate}. 
The main part is to prove that the above series $u(z,\by)$ converges with higher regularity and hence $u(z,\by)$ is a regular solution to \eqref{e:possion-eq}.

To begin with, we will prove that the series $u(z,\by)$ converges in the $C^0$-norm and hence gives  a $C^0$-function. Combining Lemma \ref{l:error-estimate}, Lemma \ref{l:coefficients-estimate} and the eigenfunction estimate  in Lemma \ref{l:eigenfunction-bound}, we have 
\begin{align}
|u(z,\by)| 
\leq  \sum\limits_{k=1}^{\infty}|u_{k}(z)| \cdot |\vf_k(\by)| 
\leq C\sum\limits_{k=1}^{\infty}\frac{e^{\eta \cdot z^{\frac{n}{2}}}}{(\Lambda_k)^{K_0-\frac{n}{2}-\frac{1}{2n}}}. 
\label{e:numerical-series}
\end{align}
 Applying Weyl's 
law to the spectrum  $\{\Lambda_k\}_{k=1}^{\infty}$,
\begin{equation}
C_0^{-1} k^{\frac{2}{2n-1}}\leq|\Lambda_k| \leq C_0 k^{\frac{2}{2n-1}},
\end{equation}
where $C_0>0$ depends only on  $Y^{2n-1}$ and $k$ is sufficiently large.
Let $K_0\geq 2n+1$, then \begin{equation}
|u(z,\by)|\leq C\cdot e^{\eta\cdot z^{\frac{n}{2}}}\cdot \sum\limits_{k=1}^{\infty}\frac{1}{(\Lambda_k)^{\frac{3n}{2}}} \leq C\cdot e^{\eta\cdot z^{\frac{n}{2}}}\cdot\sum\limits_{k=1}^{\infty} \frac{1}{k^{\frac{3n}{2n-1}}} \leq  C\cdot e^{\eta\cdot z^{\frac{n}{2}}}.
\end{equation}
Therefore, 
$u\in C^0(\Ca)$ and $u$ satisfies the $C^0$-asymptotic estimate in \eqref{e:poisson-solution-estimate}. 

Based on the above $C^0$-regularity, we will apply the standard elliptic regularity on $(\Ca,g_{\Ca})$ to show that $u\in C^2(\Ca)$ is a regular solution to $\Delta_{g_{\Ca}} u =v$.
We take the partial sums
\begin{align}
U_N(z,\by)\equiv\sum\limits_{k=1}^{N}u_k(z)\vf_k(\by), \
V_N(z,\by)\equiv\sum\limits_{k=1}^{N}v_k(z)\vf_k(\by)
\end{align}
of the expansions
\begin{align}
u(z,\by)=\sum\limits_{k=1}^{\infty}u_k(z)\vf_k(\by), \
v(z,\by)=\sum\limits_{k=1}^{\infty}v_k(z)\vf_k(\by).
\end{align}
It is obvious that,
\begin{equation}
\Delta_{g_{\Ca}}U_N=V_N.
\end{equation}
For every $\bx\equiv (z,\by)\in \Ca$, we will apply the elliptic regularity on the ball $B_2(\bx)\subset\Ca$ to obtain the higher regularity of $u$.

As a starter, by the same arguments as the above, we have
$\|V_N-v\|_{C^0(B_2(\bx))}\to 0$
as $N\to\infty$.
The proof of the higher order convergence is almost verbatim. In fact, we just need to use $\|v\|_{C^{2K_0+m}}$ with $m\leq K_0$.
Since $\Delta_{g_{\mathcal{C}}} U_N=V_N$, the  standard $W^{2,p}$- implies that regularity 
for every $1<p<\infty$, 
\begin{equation}\|U_N\|_{W^{2,p}(B_1(\bx))}\leq C_{p,\bx}\cdot (\|V_N\|_{C^0(B_2(\bx))}+(\|U_N\|_{C^0(B_2(\bx))}).\end{equation} By assumption $v\in C^{3K_0}(\Ca)$ for  $K_0\geq 2n+1$, so it follows that 
$\|V_N\|_{C^2(B_2(\bx))}\leq C_{\bx}$. Therefore, for every $1<p<\infty$, 
\begin{equation}
\|U_N\|_{W^{4,p}(B_1(\bx))}\leq C_{p,\bx}  ( \|U_N\|_{W^{2,p}(B_{3/2}(\bx))} + \|V_N\|_{W^{2,p}(B_2(\bx))} )\leq C_{p,\bx}.
\end{equation}
Now it suffices to choose $p>2n$,  so the Sobolev embedding implies  
\begin{equation}
\|U_N\|_{C^{3,\alpha}(B_1(\bx))}\leq C_{p,\bx},\ \alpha \equiv 1-\frac{2n}{p},
\end{equation}
which implies that $U_N\to u$ in the $C^3$-norm with respect to $g_{\Ca}$.  The proof of the first step 
is done.

We have constructed a solution $u$ satisfying $|u(\bx)| \leq C\cdot e^{\eta\cdot z^{\frac{n}{2}}}$.
Now we are ready to show that 
\begin{equation}
|\nabla u(\bx)| \leq C\cdot e^{\eta\cdot z^{\frac{n}{2}}}.\label{e:gradient-u-decay}
\end{equation}
This can be accomplished by the elliptic $W^{2,p}$-estimate. Since a Calabi space
$(\Ca,g_{\Ca})$ is collapsed with bounded curvatures as $z\to+\infty$, so there is some constant $r_0>0$ such that for each $\bx\in\Ca$ satisfying $z(\bx)\geq 1$,  the universal cover $(\widetilde{B_{2r_0}(\bx)},\tilde{\bx})$ is non-collapsing. Now we lift the solution $u$ to this non-collapsing local universal cover, then for any $p>1$, there exists $C_p>0$ such that
 \begin{equation}
|u|_{W^{2,p}(B_{r_0}(\tilde{\bx}))} \leq C_p\cdot (|u|_{L^{\infty}(B_{2r_0}(\tilde{\bx}))} + |v|_{L^{\infty}(B_{2r_0}(\tilde{\bx}))})\leq C_{p}\cdot e^{\eta\cdot z^{\frac{n}{2}}}.
\end{equation}
 We can choose any $p>2n$, then Sobolev embedding gives
 \begin{equation}
 |u|_{C^{1,\alpha}(B_{r_0}(\tilde{\bx}))}  \leq C\cdot e^{\eta\cdot z^{\frac{n}{2}}}.
 \end{equation}
In particular, 
\begin{equation}
|\nabla u(\bx)| \leq C\cdot e^{\eta\cdot z^{\frac{n}{2}}},
\end{equation}
where $\alpha\equiv 1-\frac{2n}{p}$.  
So the proof of the proposition is done. 
 \end{proof}

\section{Proof of the Liouville theorem}
\label{s:proof of Liouville}
In this subsection, we will complete the proof of Theorem \ref{t:liouville-theorem-calabi}.

To begin with, we prove the following lemma, which states that any harmonic function with slow exponential growth rate on a $\delta$-asymptotically Calabi space is in fact {\it almost harmonic} with repsect to the Calabi model metric. 

 \begin{lemma}\label{l:almost-harmonic}
Let $(X^{2n},g)$ be a complete non-compact Riemannian manifold which is 
$\delta$-asymptotically Calabi space in the sense of Definition \ref{d:asymptotic-Calabi}. Let $\ldel>0$ be a  constant 
such that if $u$ satisfies
\begin{align}
\Delta_{g} u  = 0 \quad \text{and}\quad 
u = O(e^{\ldel \cdot z^{\frac{n}{2}}}),
\end{align}
then there exists $z_0>0$, such that for every fixed $k\in \dZ_+$, we have for all $z\geq z_0$,
 \begin{equation}
 |\nabla^k_{g_{\mathcal C^n}} \Delta_{g_{\Ca}} u(z, \by)|_{g_{\mathcal{C}^n}} \leq C_k \cdot e^{(\ldel-\delta)\cdot z^{\frac{n}{2}}},
 \end{equation}
 where $C_k$ is a constant depending only on $X$ and $k$. 
\end{lemma}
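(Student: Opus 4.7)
\medskip

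\noindent\textbf{Proof proposal.} The strategy is to transplant the problem from $X^{2n}$ to the Calabi model space $\Ca$ via the diffeomorphism $\Phi$ from Definition \ref{d:asymptotic-Calabi}, and then simply compare the two Laplacians. By a slight abuse of notation, write $u$ for $\Phi^*u = u\circ \Phi$ on $\Ca\setminus K'$. Since $\Delta_g u=0$ upstairs, pulling back gives $\Delta_{\Phi^*g} u=0$ on $\Ca\setminus K'$, so that
\begin{equation*}
\Delta_{g_{\Ca}} u \;=\; (\Delta_{g_{\Ca}}-\Delta_{\Phi^*g})\, u.
\end{equation*}
The plan is therefore to (i) express the right-hand side as a bilinear contraction of $\Phi^*g-g_{\Ca}$ (and its first derivatives) against $\nabla^{\leq 2}_{g_{\Ca}} u$, (ii) control $|\nabla^{\leq 2}_{g_{\Ca}} u|$ by the growth bound on $u$ through elliptic regularity, and (iii) multiply against the exponentially decaying bound on $\Phi^*g-g_{\Ca}$ to produce the factor $e^{(\ldel-\delta)z^{n/2}}$.

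For step (i), in any local frame the difference of the two Laplacians has the schematic form
\begin{equation*}
(\Delta_{g_{\Ca}} - \Delta_{\Phi^*g}) u \;=\; A * \nabla^2_{g_{\Ca}} u \,+\, B * \nabla_{g_{\Ca}} u,
\end{equation*}
where $A = \Phi^*g - g_{\Ca}$ and $B$ is a linear combination of $\Phi^*g - g_{\Ca}$ and $\nabla_{g_{\Ca}}(\Phi^*g - g_{\Ca})$, measured with respect to $g_{\Ca}$. By Definition \ref{d:asymptotic-Calabi} we have $|\nabla^j_{g_{\Ca}}(\Phi^*g-g_{\Ca})|_{g_{\Ca}} = O(e^{-\delta z^{n/2}})$ for all $j$, so pointwise on $\Ca$,
\begin{equation*}
|\Delta_{g_{\Ca}} u|_{g_{\Ca}} \;\leq\; C\, e^{-\delta z^{n/2}}\bigl(|\nabla^2_{g_{\Ca}} u|_{g_{\Ca}} + |\nabla_{g_{\Ca}} u|_{g_{\Ca}} + |u|\bigr).
\end{equation*}

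For step (ii), I would argue exactly as at the end of the proof of Proposition \ref{p:poisson-solvability}. The Calabi model is collapsing with uniformly bounded curvature as $z\to\infty$, but at every point of $\{z\geq z_0\}$ there is a ball of some definite $g_{\Ca}$-radius $r_0>0$ whose universal cover has non-collapsed uniformly bounded geometry. Lift $u$ there; since the lift is still harmonic with respect to $\Phi^*g$, and $\Phi^*g$ is uniformly $C^k$-close to $g_{\Ca}$, standard $W^{2,p}$ and Schauder estimates on a fixed-size ball give
\begin{equation*}
|\nabla^k_{g_{\Ca}} u|_{g_{\Ca}}(\bx) \;\leq\; C_k \sup_{B_{r_0}(\bx)} |u| \;\leq\; C_k\, e^{\ldel z^{n/2}}
\end{equation*}
for every fixed $k\in\dN$, where we have also used that $z$ varies within $B_{r_0}(\bx)$ by a uniformly bounded amount. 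Combining with step (i) yields $|\Delta_{g_{\Ca}} u|\leq C e^{(\ldel-\delta)z^{n/2}}$, and for the higher-derivative bounds on $\Delta_{g_{\Ca}} u$ one differentiates the schematic identity in (i) and applies the same lift-and-estimate argument, noting that each additional derivative of $A$ or $B$ preserves the $e^{-\delta z^{n/2}}$ factor while each additional derivative of $u$ preserves the $e^{\ldel z^{n/2}}$ factor.

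The only mildly delicate point is step (ii): one must verify that the Schauder/Sobolev constants on the lifted balls are genuinely uniform in $\bx$ as $z\to\infty$, which is why the lift to the local universal cover, rather than a direct estimate on the collapsing manifold, is necessary. Everything else is routine manipulation of the schematic formula for the difference of Laplacians.
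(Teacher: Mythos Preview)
Your proposal is correct and follows essentially the same approach the paper indicates: compare $\Delta_{g_{\Ca}}$ with $\Delta_{\Phi^*g}$ schematically, control $|\nabla^k_{g_{\Ca}} u|$ via local elliptic estimates on the non-collapsed universal covers (using bounded curvature of $g_{\Ca}$), and multiply the exponential factors. The paper in fact omits the details and only remarks that the argument is the same as Claim 4.18 in \cite{HSVZ}, relying on precisely the bounded-curvature/local-lift mechanism you describe.
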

The proof of this is essentially the same as the proof of Claim 4.18 in \cite{HSVZ}. We omit the details here. By quite explicit computations,  the curvatures of the Calabi model space are uniformly bounded as $z\to+\infty$, which allows us to use the local elliptic estimate even though the geometry is collapsing at infinity.

\begin{proof}
[Proof of Theorem \ref{t:liouville-theorem-calabi}]
For the given $\delta$-asymptotically Calabi space $(X^{2n,g})$, let $(\Ca, g_{\Ca})$ be the incomplete Calabi model space with the associated divisor $D$.
Let us denote 
\begin{equation}
\epsilon_X\equiv \min\{\delta, \delta_b\},
\end{equation} where $\delta_b\equiv 2\left(\frac{\lambda_D}{n}\right)^{\frac{1}{2}}>0$ is the constant defined by \eqref{e:definition of deltab} in Proposition \ref{p:harmonic-function-decay}.

Let $u$ be a harmonic function on the $\delta$-asymptotically Calabi space $(X^{2n}, g)$ such that for some $\epsilon\in(0,\ty)$, $u$ satisfies the growth condition
\begin{equation}
u = O(e^{ \epsilon\cdot  z^{\frac{n}{2}}})\label{e: exp-growth}
\end{equation}
By assumption, there exists some large constant $z_1\gg1$, and a diffeomorphism
\begin{equation}
\Phi: [z_1,+\infty)\times Y^{2n-1} \to X^{2n}\setminus K \end{equation}
such that for all $k\in\dN$
\begin{equation}
|\nabla_{g_{\Ca}}^k(\Phi^*g-g_{\Ca})|_{g_{\Ca}}  \leq C e^{-\delta\cdot z^{\frac{n}{2}}}.\label{e:C1-exp-close}
\end{equation}
By the Lemma \ref{l:almost-harmonic}, there is some large constant $z_0\gg1$ such that 
\begin{align}
\Delta_{g_{\mathcal C^n}}u &=\phi,
\\
|\nabla^k_{g_{\mathcal C^n}}\phi|_{g_{\Ca}}&=O(e^{(\epsilon-\delta)\cdot z^{\frac{n}{2}}})
\end{align}
for all $z\geq z_0$ and $k\in\dN$, where $\epsilon<\ty\leq\delta$. 

Then applying Proposition \ref{p:poisson-solvability}  on $[z_0,+\infty)\times Y^{2n-1}$, there exists a solution to the equation
\begin{equation}
\Delta_{g_{\Ca}}v=\phi\label{e:surjective}
\end{equation}
such that 
\begin{equation}|v|+|\nabla_{g_{\Ca}} v|_{g_{\Ca}}=O(e^{-\ell\cdot  z^{\frac{n}{2}}})\end{equation} for any $\ell\in(0,\delta-\epsilon)$.
Notice that, as $z\to+\infty$, curvatures are uniformly bounded in the Calabi space. 
Therefore, we have
\begin{align}
0 = \Delta_{g}u = \Delta_{g_{\Ca}} (u - v), \label{e:model-harmonic}
\end{align}
and $u - v = O( e^{\epsilon \cdot z^{\frac{n}{2}}})$. 
Since $\epsilon<\ty\leq\delta_b$, now we are in a position to apply Proposition \ref{p:harmonic-function-decay} to $u - v$, which shows that there is some harmonic function $h$ on the Calabi space such that
\begin{equation}u - v = \kappa_0\cdot  z + c_0 +h,\end{equation}
where $|h|+|\nabla_{g_{\Ca}} h|_{g_{\Ca}}=O(e^{-\underline{\delta}\cdot z^{\frac{n}{2}}})$ 
for all $\ldel\in (0, \delta_b)$. 
Also $|dz|_{g_{\mathcal C^n}}\rightarrow 0$ as $z\rightarrow\infty$, then 
\begin{equation}
|du|_g\leq C|du|_{g_{\mathcal C^n}} \leq C(|dv|_{g_{\mathcal C^n}}+|dz|_{g_{\mathcal C^n}}+|dh|_{g_{\mathcal C^n}})\rightarrow 0,  \ \ \ \ z\rightarrow\infty. \label{e:du-decay-estimate}
\end{equation}
Since $\Delta_{g}u=0$, so it holds that
 \begin{equation}
 \Delta_H(du)=dd^*(du)=-d\Delta_{g}u=0,
 \end{equation}
 where $\Delta_H$ is the Hodge Laplacian on $(X^{2n}, g)$. 
By assumption, $(X^{2n},g)$ satisfies $\Ric_{g}\geq 0$, then Bochner's formula implies that  \begin{equation}
\frac{1}{2}\Delta_g|du|_g^2 =  |\nabla_g du|_g^2
+  \Ric_g(du, du) \geq 0.\end{equation}
Applying the decay property of $|du|$ in \eqref{e:du-decay-estimate} and the maximum principle, \begin{equation}
|du|_{g}\equiv0 \ \text{on}\ X^{2n}.
\end{equation} 
Therefore, $u$
is a constant.
\end{proof}

\appendix

\section{Some formulae in special functions}

\label{s:appendix-1}

For developing quantitative estimates in this paper, we need to use some formulae
and facts about the modified Bessel functions and the confluent hypergeometric functions.  
Some formulae applied in our concrete setting are in fact not completely standard in the literature, which deserves some proof. 
For making the paper the self-contained and for readers' convenience, 
we try to summarize those results with detailed and checkable proofs in this section. Our main reference  is \cite{Lebedev}.

\subsection{Modified Bessel functions}

Let $\nu\in\dR$, we consider the following {\it modified Bessel equation}  
\begin{equation}
y^2\cdot\frac{d^2 \mathcal{B}(y)}{dy^2}+y\cdot\frac{d\mathcal{B}(y)}{dy}-(y^2+\nu^2)\cdot \mathcal{B}(y)=0,\ y\geq 0.\label{e:m-b-eq}
\end{equation}
First, for any $\nu\in\dR$, we define
\begin{align}
I_{\nu}(y)\equiv\sum\limits_{k=0}^{\infty}\frac{1}{\Gamma(k+1)\Gamma(k+\nu+1)}\Big(\frac{y}{2}\Big)^{2k+\nu}.\end{align}
In the special case $\nu=-\ell$ with $\ell\in\dZ_+$, then the above definition can be also explained as 
\begin{equation}
I_{\nu}(y)=  \sum\limits_{k=\ell}^{\infty}\frac{1}{\Gamma(k+1)\Gamma(k-\ell+1)}\Big(\frac{y}{2}\Big)^{2k-\ell}.
\end{equation}
Immediately, for any positive integer $\ell\in\dZ_+$, we have
\begin{equation}
I_{-\ell}(z) = I_{\ell}(z).
\end{equation}
Next we define $K_{\nu}(z)$
as follows,
\begin{align}
K_{\nu}(y)\equiv \begin{cases}\frac{\pi}{2\sin(\nu\pi)}\cdot (I_{-\nu}(y)-I_{\nu}(y)), & \nu\not\in\dZ,
\\
\lim\limits_{\substack{\nu'\to\nu\\\nu'\not\in\dZ}}K_{\nu'}(y),  & \nu\in\dZ.
 \end{cases}\label{e:def-K-function}
\end{align}
One can check that $I_{\nu}(y)$ and $K_{\nu}(y)$ are two linearly independent solutions to \eqref{e:m-b-eq}. In the literature,  $I_{\nu}$  and $K_{\nu}$ are usually called {\it modified Bessel functions}.

In our context, mainly we are interested in the solutions $I_{\nu}$ and $K_{\nu}$ with an index $\nu=\frac{1}{n}$ and $n\geq 2$. 
The simples case is $n=2$
such that both $I_{\frac{1}{2}}(y)$ and $K_{\frac{1}{2}}(y)$ have explicit formulae:
\begin{equation}
I_{\frac{1}{2}}(y) =\sqrt{\frac{2}{\pi y}} \sinh(y),\ K_{\frac{1}{2}}(y) =\sqrt{\frac{\pi}{2 y}} e^{-y}.
\end{equation}

The main part of this subsection is to prove the following useful integral representations for $I_{\nu}$ and $K_{\nu}$.

\begin{lemma}\label{l:infinite-integral}
Given $\nu\in\dR$, 
then the following integral formulae hold for each $y>0$, 
\begin{align}
I_{\nu}(y)&=\frac{1}{\pi}\int_{0}^{\pi}e^{y\cos\theta} \cos(\nu\theta) d\theta - \frac{\sin(\nu\pi)}{\pi}\int_0^{\infty}e^{-y\cosh t - \nu t} dt,
\\
K_{\nu}(y) &= \int_{0}^{\infty}e^{-y\cosh t}\cosh(\nu t)dt.
\end{align}

\end{lemma}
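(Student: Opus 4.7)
The plan is to derive both formulas from a single contour integral representation of $I_\nu$ of Schl\"afli--Hankel type, which is obtained by combining the power-series definition of $I_\nu(y)$ with Hankel's loop integral for the reciprocal Gamma function, $\frac{1}{\Gamma(k+\nu+1)}=\frac{1}{2\pi i}\int_H e^w w^{-k-\nu-1}\,dw$, where $H$ runs from $-\infty$ below the negative real axis, loops counterclockwise around $0$, and returns to $-\infty$ above. Interchanging sum and integral (justified by termwise uniform convergence along $H$) and substituting $w=(y/2)t$ yields
$$I_\nu(y)=\frac{1}{2\pi i}\int_H e^{\frac{y}{2}(t+\frac{1}{t})}t^{-\nu-1}\,dt.$$

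Next, I would deform $H$ into a ``dumbbell'' contour: (i) the lower edge of the cut $(-\infty,-1]$ (with $\arg t=-\pi$), traversed from $-\infty$ to $-1$; (ii) the unit circle $|t|=1$, traced from $-1$ counterclockwise back to $-1$; and (iii) the upper edge of the cut $[-1,-\infty)$ (with $\arg t=\pi$), traversed back to $-\infty$. The deformation is valid because the integrand is holomorphic in the slit plane and $e^{\frac{y}{2}(t+1/t)}$ decays on the tails for $y>0$. On the unit circle, parameterize $t=e^{i\theta}$, $\theta\in[-\pi,\pi]$: after using that $e^{y\cos\theta}$ is even in $\theta$, this piece contributes $\frac{1}{\pi}\int_0^\pi e^{y\cos\theta}\cos(\nu\theta)\,d\theta$. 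On the two horizontal segments, set $t=-e^u$, $u\in[0,\infty)$, so that $t^{-\nu-1}|_{\arg t=\pm\pi}=-e^{\mp i\pi\nu}e^{-(\nu+1)u}$; combining via $e^{-i\pi\nu}-e^{i\pi\nu}=-2i\sin(\nu\pi)$, this piece contributes $-\frac{\sin(\nu\pi)}{\pi}\int_0^\infty e^{-y\cosh u-\nu u}\,du$. Adding the contributions yields the claimed formula for $I_\nu$.

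For $K_\nu$, I would use its defining relation $K_\nu=\frac{\pi}{2\sin(\nu\pi)}(I_{-\nu}-I_\nu)$ (with the integer case following by continuity from \eqref{e:def-K-function}). Subtracting the integral formula for $I_\nu$ from that for $I_{-\nu}$, the trigonometric pieces cancel because cosine is even in $\nu$, while the hyperbolic pieces combine (using $\sin(-\nu\pi)=-\sin(\nu\pi)$ and $e^{\nu u}+e^{-\nu u}=2\cosh(\nu u)$) into $\frac{2\sin(\nu\pi)}{\pi}\int_0^\infty e^{-y\cosh u}\cosh(\nu u)\,du$. Multiplying by $\frac{\pi}{2\sin(\nu\pi)}$ produces exactly the formula for $K_\nu(y)$.

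The main technical obstacle is justifying the contour deformation rigorously -- specifically, verifying that the connecting arcs at infinity contribute nothing (using the decay of $e^{\mathrm{Re}(t)\cdot y/2}$ along the negative real direction) and that the termwise interchange of summation and integration in the first step converges absolutely along $H$. Both steps are standard and carried out in detail in \cite{Lebedev}, so I would cite those derivations rather than reproduce them here.
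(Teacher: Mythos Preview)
Your proposal is correct and follows essentially the same approach as the paper: both start from Hankel's loop integral for $1/\Gamma$, insert it into the power series for $I_\nu$, interchange sum and integral, and then parameterize the resulting contour as the unit circle plus two rays along the negative real axis (your substitution $w=(y/2)t$ followed by $t=e^{i\theta}$ and $t=-e^{u}$ is exactly the paper's $w=\frac{y}{2}e^{\zeta}$). The derivation of $K_\nu$ from $K_\nu=\frac{\pi}{2\sin(\nu\pi)}(I_{-\nu}-I_\nu)$ is likewise identical.
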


\begin{proof}
First, we prove the integral formula for $I_{\nu}$. The idea of the proof was originally inspired by Hankel's  representation formula for the reciprocal gamma function. In fact, let $\mathcal{L}\subset \dC$ be a contour winding around the negative $Ox$-axis. In our particular case, $\mathcal{L}=\mathcal{L}_1 + \mathcal{L}_2+\mathcal{L}_3$, where $\mathcal{L}_1$ and $\mathcal{L}_3$ are two rays parallel to $Ox$ and $\mathcal{L}_2$ is an arc of the unit circle centered at the origin (See Figure \ref{f:hh}). So Hankel's representation formula gives that
\begin{equation}
\frac{1}{\Gamma(k+\nu+1)}=\frac{1}{2\pi\sqrt{-1}}\int_{\mathcal{L}}e^{w} w^{-(k+\nu+1)}dw,\ w\in\dC.\label{e:1/Gamma}
\end{equation}
By the power series definition of $I_{\nu}$,
\begin{eqnarray}
I_{\nu}(y)&=&\sum\limits_{k=0}^{\infty}\frac{1}{\Gamma(k+1)\Gamma(k+\nu+1)}\Big(\frac{y}{2}\Big)^{2k+\nu}
\nonumber\\
&=&(\frac{y}{2})^{\nu}\frac{1}{2\pi\sqrt{-1}}\int_{\mathcal{L}}e^w w^{-\nu-1}\sum\limits_{k=0}^{\infty}\frac{(\frac{y^2}{4w})^{k}}{k!}dw
\nonumber\\
&=&(\frac{y}{2})^{\nu}\frac{1}{2\pi\sqrt{-1}}\int_{\mathcal{L}}e^{w+\frac{y^2}{4w}} w^{-\nu-1}dw.
\end{eqnarray}
For every $y>0$, we make change of variables for each $w\in\dC$, \begin{equation}w=\frac{y\cdot e^{\zeta}}{2}=\frac{ye^t}{2} \cdot e^{\sqrt{-1}\theta},\ 0< t<\infty,\ 0\leq \theta\leq 2\pi.\end{equation} 
Letting $\mathcal{L}_1$ and $\mathcal{L}_3$ tend to each other, then in terms of the variables $(t,\theta)$,
\begin{equation}
\int_{\mathcal{L}} e^{w+\frac{y^2}{4w}} w^{-\nu-1}dw
=\frac{1}{\pi}\int_{0}^{\pi}e^{y\cos\theta} \cos(\nu\theta) d\theta - \frac{\sin(\nu\pi)}{\pi}\int_0^{\infty}e^{-y\cosh t - \nu t} dt.
\end{equation}

The integral formula for $K_{\nu}$
follows easily from the above integral representation for $I_{\nu}$ and the definition
\begin{equation}
K_{\nu}(y)=\frac{\pi(I_{-\nu}(y)-I_{\nu}(y))}{2\sin(\nu\pi)}.\end{equation}

 \begin{figure}
\begin{tikzpicture}[scale = 0.5]

\draw (-5,0) -- (4,0);

\draw (0,-3) -- (0,3);

\draw (0,3) -- (0.1, 2.8);

\draw (0,3) -- (-0.1, 2.8);

\draw (4,0) -- (3.8, -0.1) ; 

\draw (4,0) -- (3.8, 0.1) ;

\draw[thick] (1.41,1.41) -- (1.67,1.35);

\draw[thick] (1.41,1.41) -- (1.50,1.15);

\draw[very thick] (-1.97,-0.35) arc (-170:170:2);

\draw[very thick] (-1.94,-0.35) -- (-4.8,-0.35);

\draw[very thick] (-1.94,0.35) -- (-4.8,0.35);

\draw[thick] (-4,0.35) -- (-3.7,0.50);

\draw[thick] (-4,0.35) -- (-3.7,0.20);

\draw[thick] (-3.7,-0.35) -- (-4.0,-0.20);

\draw[thick] (-3.7,-0.35) -- (-4.0,-0.50);

\node at (1.9,1.9) {$\mathcal{L}_2$};

\node at (0.40, -0.5) {$O$};

\node at (4.0, -0.4) {$x$};

\node at (-0.3, 2.8) {$y$};

\node at (-4.0, -1.0) {$\mathcal{L}_1$};
\node at (-4.0, 1.0) {$\mathcal{L}_3$};

\end{tikzpicture}
\caption{The contour $\mathcal{L}=\mathcal{L}_1+\mathcal{L}_2 + \mathcal{L}_3$ for the integral \eqref{e:1/Gamma}}
 \label{f:hh}

\end{figure}
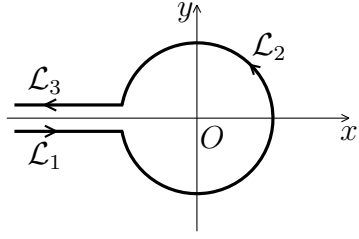

\end{proof}

\subsection{The confluent hypergeometric functions}

Now we summarize some results regarding the confluent hypergeometric functions which are used in this paper. Given $\alpha,\beta\in\dR$ such that $\alpha>\beta$ and $\alpha$ is not a negative integer, we consider the following {\it confluent hypergeometric equation}
\begin{equation}
y\cdot \frac{d^2 \mathcal{J}(y)}{dy^2}+(\fa-y)\cdot \frac{d\mathcal{J}(y)}{dy}-\fb\cdot \mathcal{J}(y)=0.\label{e:c-h-e}\end{equation}
Let 
\begin{equation}
\Ku(\beta,\alpha,y)\equiv\sum\limits_{k=0}^{\infty}\frac{(\beta)_k}{(\alpha)_k}\cdot\frac{y^k}{k!} , \end{equation}
where we define the notation $(x)_k\equiv \prod\limits_{m=1}^{k}(x+m-1)$ and $(x)_0=1$. So the power series $\Ku(\fb,\fa,z)$ is always well-defined for all $\fb\in\dC$, $z\in\dC$ and  $\fa\in\dC\setminus\{0,-1,-2,\ldots\}$. 
Moreover, for any fixed $z\in\dC$, the function $\Ku$ is entire in $\fb$ and meromorphic in $\fa$ with simple poles at negative integers.

It is by straightforward calculations that the function $\Ku(\fb,\fa,y)$ is a solution to \eqref{e:c-h-e}. In the literature, $\Ku$ is called {\it Kummer's (confluent hypergeometric) function}. Moreover, when $y>0$, one can directly check that the function 
$\widehat{\Ku}(\fb,\fa,y)\equiv y^{1-\fa}\cdot \Ku(1+\fb-\fa,2-\fa, y)$, which is linearly independent of $\Ku(\fb,\fa,y)$, also solves \eqref{e:c-h-e}. Therefore, 
the
general solution of \eqref{e:c-h-e} for $y>0$ is
\begin{equation}
\mathcal{J}(y) = C\cdot \Ku(\fb,\fa,y) + C^* \cdot  y^{1-\fa}\cdot \Ku(1+\fb-\fa,2-\fa, y).\label{e:combination}
\end{equation}

The power series definition of $\Ku(\fb,\fa,y)$ immediately gives the following integral representation formula which is well known in the literature. We include a short proof just for the convenience of the readers. 
\begin{lemma}\label{l:Ku-int}
For any $\fa>\fb>0$, then for each $y\in\dR$, \begin{equation}\Ku(\fb,\fa,y) = \frac{\Gamma(\fa)}{\Gamma(\fb)\Gamma(\fa-\fb)}
\int_0^1 e^{yt} t^{\fb-1}(1-t)^{\fa-\fb-1}dt.
\end{equation}

\end{lemma}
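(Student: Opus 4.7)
The plan is to verify the formula by expanding $e^{yt}$ as a power series, interchanging the sum with the integral, and recognizing the resulting integrals as Beta functions. Since the assumption $\alpha > \beta > 0$ guarantees that both exponents $t^{\beta-1}$ and $(1-t)^{\alpha-\beta-1}$ are integrable on $[0,1]$, and since $\sum_k |y|^k t^k / k! = e^{|y|t}$ is bounded on $[0,1]$, Fubini/Tonelli justifies the interchange without extra work.

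Carrying this out, I would write
\begin{equation}
\int_0^1 e^{yt} t^{\beta-1}(1-t)^{\alpha-\beta-1}\,dt = \sum_{k=0}^{\infty} \frac{y^k}{k!} \int_0^1 t^{k+\beta-1}(1-t)^{\alpha-\beta-1}\,dt.
\end{equation}
Each integral on the right is a Beta function, namely $B(k+\beta,\alpha-\beta) = \frac{\Gamma(k+\beta)\Gamma(\alpha-\beta)}{\Gamma(k+\alpha)}$. Multiplying by the prefactor $\frac{\Gamma(\alpha)}{\Gamma(\beta)\Gamma(\alpha-\beta)}$ cancels $\Gamma(\alpha-\beta)$, so the right-hand side becomes
\begin{equation}
\sum_{k=0}^{\infty} \frac{\Gamma(\alpha)}{\Gamma(\beta)} \cdot \frac{\Gamma(k+\beta)}{\Gamma(k+\alpha)} \cdot \frac{y^k}{k!}.
\end{equation}

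Finally, using the identities $\Gamma(k+\beta)/\Gamma(\beta) = (\beta)_k$ and $\Gamma(k+\alpha)/\Gamma(\alpha) = (\alpha)_k$, the above series collapses to $\sum_{k=0}^\infty \frac{(\beta)_k}{(\alpha)_k}\cdot \frac{y^k}{k!}$, which is precisely the defining series of $\Ku(\beta,\alpha,y)$. There is no real obstacle here; the only subtlety worth a sentence is the justification of the term-by-term integration, for which dominated convergence (with the dominating function $e^{|y|}t^{\beta-1}(1-t)^{\alpha-\beta-1}$) suffices under the hypotheses $\alpha > \beta > 0$.
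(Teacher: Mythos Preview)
Your proof is correct and follows essentially the same approach as the paper: both expand via the Beta function identity $B(p,q)=\Gamma(p)\Gamma(q)/\Gamma(p+q)$ and interchange summation with integration, the only cosmetic difference being that the paper starts from the series and rewrites each coefficient as an integral, whereas you start from the integral and recover the series. Your explicit remark on dominated convergence is a welcome addition.
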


\begin{proof}
Given $p,q>0$, let $B(p,q)$ be the beta function which is defined by
\begin{equation}
B(p,q) \equiv \int_0^{1}t^{p-1}(1-t)^{q-1}dt.
\end{equation}
Then the beta function satisfies $B(p,q)=\frac{\Gamma(p)\Gamma(q)}{\Gamma(p+q)}$.
The above formulae imply that
\begin{eqnarray}
\frac{(\fb)_k}{(\fa)_k}
& = & \frac{\Gamma(\fb+k)}{\Gamma(\fb)}\cdot\frac{\Gamma(\fa)}{\Gamma(\fa+k)} \nonumber\\
& =& \frac{\Gamma(\fa)}{\Gamma(\fb)}\cdot \frac{B(\fb+k,\fa-\fb)}{\Gamma(\fa-\fb)}\nonumber\\
&= &\frac{\Gamma(\fa)}{\Gamma(\fb)\Gamma(\fa-\fb)}\int_0^1t^{\fb+k-1}(1-t)^{\fa-\fb-1}dt.
\end{eqnarray}

Now we return to the definition of $\Ku$, combining the above summation, 
\begin{eqnarray}
\Ku(\beta,\alpha,y) &=& \sum\limits_{k=0}^{\infty}\frac{(\beta)_k}{(\alpha)_k}\cdot\frac{y^k}{k!} 
\nonumber\\
&=&\frac{\Gamma(\fa)}{\Gamma(\fb)\Gamma(\fa-\fb)}\int_0^1t^{\beta-1}(1-t)^{\fa-\fb-1}\sum\limits_{k=0}^{\infty}\frac{(yt)^{k-1}}{k!}dt
\nonumber\\
&=&\frac{\Gamma(\fa)}{\Gamma(\fb)\Gamma(\fa-\fb)}\int_0^1e^{yt}t^{\beta-1}(1-t)^{\fa-\fb-1}dt.
\end{eqnarray}
The proof is done.
\end{proof}

Given $\fb>0$ and $y>0$, we define the function
\begin{equation}
\mathcal{U}(\fb,\fa, y) \equiv 
\frac{1}{\Gamma(\fb)}\int_0^{\infty}e^{-yt}t^{\fb-1}(1+t)^{\fa-\fb-1}dt.
\end{equation}
Quick computations show that  for each $\fb>0$, the function 
$\mathcal{U}(\fb,\fa, y) $ is a solution to the confluent hypergeometric equation \eqref{e:c-h-e} on the positive real axis $\dR_+$.
Now let $\fb>0$ and $\fa\in\dR\setminus\{0,-1,-2,-3,\ldots\}$, thanks to \eqref{e:combination}, the function  $\mathcal{U}(\fb,\fa, y)$ can be written in terms of Kummer's function $\Ku$. Evaluating those functions and their derivatives at $y=0$, one can easily obtain
\begin{equation}
\mathcal{U}(\fb,\fa,y) = \frac{\Gamma(1-\fa)}{\Gamma(1+\fb-\fa)}\cdot\Ku(\fb, \fa, y) + 
\frac{\Gamma(\fa-1)}{\Gamma(\fb)}\cdot y^{1-\fa}\cdot \Ku(1+\fb-\fa,2-\fa, y).
\label{e:Ku-Tri}
\end{equation}
Notice that, the above relation is well-defined for each $y\geq 0$ and  non-integral $\alpha$. Moreover, if $\alpha\to n+1\in\dZ_+$, then the right hand side of \eqref{e:Ku-Tri} will tend to a definite limit. 
The function $\mathcal{U}(\fb, \fa ,y )$ is usually called {\it Tricomi's (confluent hypergeometric) function}.
In our context, we are also interested in the case $y<0$. 
It can be directly verified that, if $y<0$, the function
\begin{equation}
\Tri(\fb, \fa , y) \equiv e^{y} \cdot \mathcal{U}(\fa-\fb, \fa, -y)
\end{equation}
solves equation \eqref{e:c-h-e}.
Moreover, it immediately follows from the integral representation of $\mathcal{U}$ that for any $y<0$,
\begin{equation}
\Tri(\beta,\alpha,y)= \frac{e^y}{\Gamma(\fa-\fb)}\int_0^{\infty}e^{yt}t^{\fa-\fb-1}(1+t)^{\fb-1}dt.\end{equation}
In summary, 
if $y<0$, 
the equation \eqref{e:c-h-e} has two linearly independent solutions
$\Ku(\fb,\fa, y)$ and $\Tri(\fb, \fa, y)$.

The asymptotic behavior of $\Ku(\fb,\fa,y)$, $\mathcal{U}(\fb,\fa,y)$ and $\Tri(\fb, \fa , y)$
 can be easily seen from the above integral formulae. In fact, we have the following 
 
\begin{lemma}
\label{l:asymp-Ku-Tri}The following asymptotics hold:
\begin{enumerate}\item
 Let $\fa \in \dR\setminus\{0,-1,-2,-3,\ldots\}$ and $\fb>0$ satisfy $\fa>\fb+1$, then 
 \begin{align}
 \Ku(\fb,\fa,y) \sim 
 \begin{cases}
 \frac{\Gamma(\alpha)}{\Gamma(\alpha-\beta)}\cdot(-y)^{-\beta}, & y\to -\infty, \\
 \frac{\Gamma(\fa)}{\Gamma(\fb)}\cdot e^y\cdot y^{\fb-\fa}, & y\to+\infty.
 \end{cases}
 \end{align}
 \item Let $\beta>0$, then \begin{equation}
\mathcal{U}(\fb,\fa, y) \sim  
y^{-\fb},\ y\to+\infty.
\end{equation}
\item Let $\alpha>\beta$, then 
\begin{equation} \label{eqn-A27}
\Tri(\fb,\fa, y)\sim e^y\cdot(-y)^{\fb-\fa},\ y\to-\infty.
\end{equation}

 \end{enumerate}

\end{lemma}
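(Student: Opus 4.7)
The plan is to exploit the three integral representations already set up in the appendix and reduce every asymptotic to a single instance of dominated convergence after an appropriate rescaling, using the Kummer transformation $\Ku(\fb,\fa,y)=e^y\Ku(\fa-\fb,\fa,-y)$ to bridge the two-sided asymptotics of Kummer's function.

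I would begin with part (2). Starting from the definition
\begin{equation*}
\mathcal U(\fb,\fa,y)=\frac{1}{\Gamma(\fb)}\int_0^{\infty}e^{-yt}t^{\fb-1}(1+t)^{\fa-\fb-1}dt,
\end{equation*}
I substitute $s=yt$ for $y>0$ large, producing
\begin{equation*}
y^{\fb}\,\mathcal U(\fb,\fa,y)=\frac{1}{\Gamma(\fb)}\int_0^{\infty}e^{-s}s^{\fb-1}\Bigl(1+\frac{s}{y}\Bigr)^{\fa-\fb-1}ds.
\end{equation*}
The integrand tends pointwise to $e^{-s}s^{\fb-1}/\Gamma(\fb)$ and, for $y\geq 1$, is dominated (splitting according to whether $\fa-\fb-1$ is nonnegative or negative) by $e^{-s}s^{\fb-1}\max(1,(1+s)^{\fa-\fb-1})/\Gamma(\fb)$, which is integrable on $(0,\infty)$; dominated convergence gives $y^{\fb}\mathcal U\to 1$. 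Part (3) is then immediate from the very definition $\Tri(\fb,\fa,y)=e^y\mathcal U(\fa-\fb,\fa,-y)$, since the hypothesis $\fa>\fb$ allows one to apply (2) with $\fb$ replaced by $\fa-\fb$.

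For the $y\to+\infty$ asymptotic in (1), I would invoke Lemma \ref{l:Ku-int} (applicable as $\fa>\fb>0$) and perform the substitution $t=1-s/y$, which concentrates the mass at $t=1$ and yields
\begin{equation*}
\Ku(\fb,\fa,y)=\frac{\Gamma(\fa)\,e^y}{\Gamma(\fb)\Gamma(\fa-\fb)\,y^{\fa-\fb}}\int_0^{y}e^{-s}\Bigl(1-\frac{s}{y}\Bigr)^{\fb-1}s^{\fa-\fb-1}ds.
\end{equation*}
The strengthened hypothesis $\fa>\fb+1$ makes $s^{\fa-\fb-1}$ locally integrable at the origin with $\fa-\fb-1>0$, so an elementary integrable majorant on the whole half-line is available, and dominated convergence sends the integral to $\Gamma(\fa-\fb)$. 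For the $y\to-\infty$ asymptotic, I would first verify the Kummer transformation by observing that both $\Ku(\fb,\fa,y)$ and $e^y\Ku(\fa-\fb,\fa,-y)$ are analytic solutions of \eqref{e:c-h-e} equal to $1$ at $y=0$ and sharing the first derivative $\fb/\fa$ there, so uniqueness of the normalized analytic solution forces them to coincide. Applying the $+\infty$ asymptotic already proved, with $\fb$ replaced by $\fa-\fb$ (allowed since $0<\fb<\fa$ and the condition $\fa>(\fa-\fb)+1$ reads $\fb>1$, which may be enforced by first proving the asymptotic under that stronger condition and extending via analytic continuation in $\fb$ or by using instead the $t=1-s/y$ substitution directly for the rescaled $\Ku(\fa-\fb,\fa,-y)$), one obtains $\Ku(\fa-\fb,\fa,-y)\sim \tfrac{\Gamma(\fa)}{\Gamma(\fa-\fb)}e^{-y}(-y)^{-\fb}$, and multiplying by $e^y$ gives the claim.

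The main obstacle is the bookkeeping needed to guarantee integrable majorants that are uniform in the parameter $y$; this is routine but requires separating cases according to the signs of $\fb-1$ and $\fa-\fb-1$ at the endpoints of the integration domain. A smaller point is the verification that the Kummer transformation is available under the stated parameter conditions, which I would handle by the uniqueness-of-normalized-solution argument sketched above rather than by manipulating power series directly.
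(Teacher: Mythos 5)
Your proof is correct and follows essentially the same route as the paper's: integral representations of $\Ku$, $\mathcal{U}$, $\Tri$, a rescaling of the integration variable that concentrates the mass, and dominated convergence; the paper writes out only the $y\to-\infty$ case of (1) and asserts the rest are similar, so your treatment of (2), (3), and the $y\to+\infty$ case supplies the omitted detail. Two small remarks: for the $y\to-\infty$ case the paper's direct substitution $u=-yt$ in the integral for $\Ku(\fb,\fa,y)$ avoids the Kummer round trip and its attendant parameter mismatch entirely, since $u/y\in[-1,0]$ on the integration domain and $\fa-\fb-1>0$ give $(1+u/y)^{\fa-\fb-1}\le 1$ as a clean $y$-uniform majorant with no case split (and your fix (b) is ultimately the same computation, Kummer's identity being the reflection $t\mapsto 1-t$ in that very integral); and in the $y\to+\infty$ argument the claimed elementary $y$-uniform majorant on $[0,y]$ does not exist when $\fb<1$, because $(1-s/y)^{\fb-1}$ is unbounded near $s=y$, so the two-region split you gesture at in your closing paragraph — a bounded factor on $[0,y/2]$ and $e^{-s}\le e^{-y/2}$ on $[y/2,y]$ — is actually required, not optional. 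Your alternative derivation of Kummer's law via uniqueness of the normalized analytic solution at the regular singular point $y=0$ is legitimate (the excluded set $\fa\in\{0,-1,-2,\dots\}$ is exactly what rules out a second analytic Frobenius solution) and is a reasonable substitute for the power-series and integral manipulations in the paper's Lemma \ref{l:kummer-transformation}.
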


\begin{proof}
The proof is straightforward.
For example, we only prove \begin{equation}\Ku(\fb,\fa,y) \sim\frac{\Gamma(\alpha)}{\Gamma(\alpha-\beta)}\cdot(-y)^{-\beta}\end{equation} as $y\to -\infty$. The calculations of the remaining cases are the same. We make change of variables and let
$u=-yt$, then 
\begin{align}
\Ku(\fb,\fa,y) &= \frac{\Gamma(\fa)}{\Gamma(\fb)\Gamma(\fa-\fb)}
\int_0^1 e^{yt} t^{\fb-1}(1-t)^{\fa-\fb-1}dt
\nonumber\\
&=\frac{\Gamma(\fa)}{\Gamma(\fb)\Gamma(\fa-\fb)}\cdot (-y)^{-\fb}\cdot
\int_0^{-y}e^{-u}u^{\fb-1}\Big(1+\frac{u}{y}\Big)^{\fa-\fb-1}du.
\end{align}
Since $\fa-\fb-1>0$ and $-1\leq \frac{u}{y}\leq 0$, it is obvious $(1+\frac{u}{y})^{\fa-\fb-1}\leq 1$. Hence dominated convergence theorem implies 
\begin{equation}
\lim\limits_{y\to-\infty}\int_0^{-y}e^{-u}u^{\fb-1}\Big(1+\frac{u}{y}\Big)^{\fa-\fb-1}du=\int_0^{\infty}e^{-u}u^{\fb-1}du = \Gamma(\beta).
\end{equation}
Therefore, as $y\to-\infty$,
\begin{equation}
\Ku(\fb,\fa,y)\sim\frac{\Gamma(\fa)}{\Gamma(\fa-\fb)}\cdot(-y)^{-\fb}.
\end{equation}
\end{proof}

Next we introduce some recurrence formulae for Kummer's function. 

\begin{lemma}Let $\fa \in \dR\setminus\{0,-1,-2,-3,\ldots\}$ and $\fb\in\dR$, then for each $y\in\dR$,
\begin{align}\Ku(\fb,\fa, y) 
&= \Ku(\fb+1,\fa,y) - \frac{y}{\fa}\Ku(\fb+1,\fa+1,y),\label{e:beta-rec}
\\
 \Ku(\fb,\fa,y) &= \frac{\fa+y}{\fa}\cdot\Ku(\fb,\fa+1,y) - \frac{\fa-\fb+1}{\fa(\fa+1)}\cdot y\cdot\Ku(\fb,\fa+1,y).\label{e:alpha-rec}
\end{align}
\end{lemma}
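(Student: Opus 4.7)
The plan is to establish both recurrence identities by a direct coefficient comparison in the power-series definition
\begin{equation}
\Ku(\fb,\fa,y) = \sum_{k=0}^{\infty} \frac{(\fb)_k}{(\fa)_k}\cdot \frac{y^k}{k!}.
\end{equation}
Since both sides of each identity are entire functions of $y$ in the parameter range under consideration, it suffices to check that the coefficient of $y^k$ agrees on both sides for every $k \geq 0$. In each case the resulting claim reduces to an elementary identity among Pochhammer symbols. The key algebraic facts are the telescoping relations
\begin{equation}
(\fa)_k = \fa \cdot (\fa+1)_{k-1},\quad (\fb+1)_k = (\fb+k)\cdot (\fb+1)_{k-1},\quad (\fb)_k = \fb \cdot (\fb+1)_{k-1},
\end{equation}
together with the analogous formulas obtained by shifting $\fa \mapsto \fa+1$.

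For the first identity \eqref{e:beta-rec}, I would compute the coefficient of $y^k$ on the right-hand side for $k \geq 1$ as
\begin{equation}
\frac{(\fb+1)_k}{(\fa)_k\, k!} - \frac{1}{\fa}\cdot \frac{(\fb+1)_{k-1}}{(\fa+1)_{k-1}(k-1)!}.
\end{equation}
Using the relations above this simplifies to $\frac{(\fb+1)_{k-1}}{(\fa)_k\, k!}\bigl[(\fb+k)-k\bigr] = \frac{(\fb)_k}{(\fa)_k\, k!}$, matching the coefficient on the left. The $k=0$ term is trivial since both sides evaluate to $1$.

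For the second identity \eqref{e:alpha-rec}, the same approach applies: expand each factor as a power series, collect the coefficient of $y^k$ (being careful to account for the shift in the summation index induced by multiplication by $y$), then extract a common factor of the form $(\fb)_k / [(\fa+1)_{k-1}(k-1)!]$. The remaining verification is a short rational-function identity in $\fa,\fb,k$ that follows immediately from the telescoping relations.

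I do not expect any substantial obstacle. The only bookkeeping point requiring care is the index shift for the $y\cdot \Ku(\cdot,\cdot,y)$ terms, which is routine. The overall proof will therefore consist of expanding each identity termwise, using the telescoping Pochhammer relations to simplify, and verifying the resulting algebraic identity in $\fa$, $\fb$, and $k$.
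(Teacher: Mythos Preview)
Your approach is exactly the one the paper indicates: its entire proof reads ``The formula can be quickly verified by applying the power series definition of $\Ku$.'' Your treatment of \eqref{e:beta-rec} is correct and complete.

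However, there is a genuine obstacle you did not anticipate for \eqref{e:alpha-rec}: the identity as printed is false, so the coefficient comparison you outline will not close. As written, both Kummer functions on the right have second parameter $\fa+1$, so the right-hand side is just
\[
\Ku(\fb,\fa+1,y)\Bigl[1+\tfrac{\fb\, y}{\fa(\fa+1)}\Bigr],
\]
and comparing the coefficient of $y^k$ one is led to the requirement $\tfrac{\fa}{\fa+k}+\tfrac{k}{\fa+1}=1$, which holds only for $k=0,1$. The intended identity (and the one actually needed in the proof of Lemma~\ref{l:general-Ku-asymp}, where one steps $\fa$ up by one unit at a time) has $\Ku(\fb,\fa+2,y)$ in the last term:
\[
\Ku(\fb,\fa,y)=\frac{\fa+y}{\fa}\,\Ku(\fb,\fa+1,y)-\frac{\fa-\fb+1}{\fa(\fa+1)}\,y\,\Ku(\fb,\fa+2,y).
\]
With this correction your method goes through: extracting the coefficient of $y^k$ and using $(\fa+1)_{k-1}=(\fa)_k/\fa$ and $(\fa+2)_{k-1}=(\fa)_k(\fa+k)/[\fa(\fa+1)]$, the bracket reduces to $(\fb+k-1)(\fa+k)(\fb)_{k-1}$, giving $(\fb)_k/[(\fa)_k k!]$ as required. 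So your plan is sound, but you should flag and correct the typo rather than assert that the verification is routine.
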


\begin{proof}
The formula can be quickly verified by applying the power series definition of $\Ku$.
\end{proof}

With the above recurrence formula, we can extend the domain of indices in Lemma \ref{l:asymp-Ku-Tri} for Kummer's function.
\begin{lemma}\label{l:general-Ku-asymp}
 For any $\fa \in \dR\setminus\{0,-1,-2,-3,\ldots\}$ and $\beta\in\dR$ such that   
  $\fa>\fb$, then 
 \begin{align} \label{eqn-A34}
 \Ku(\fb,\fa,y) \sim 
 \begin{cases}
 \frac{\Gamma(\alpha)}{\Gamma(\alpha-\beta)}\cdot(-y)^{-\beta}, & y\to -\infty, \\
 \frac{\Gamma(\fa)}{\Gamma(\fb)}\cdot e^y\cdot y^{\fb-\fa}, & y\to+\infty.
 \end{cases}
 \end{align}

\end{lemma}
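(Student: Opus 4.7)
My plan is to prove the lemma by iterating the recurrence \eqref{e:beta-rec} and reducing to a slightly strengthened form of Lemma \ref{l:asymp-Ku-Tri}. By induction on $N$ using \eqref{e:beta-rec} together with Pascal's identity, one readily derives the expansion
\begin{equation*}
\Ku(\beta, \alpha, y) = \sum_{k=0}^{N} \binom{N}{k}\, \frac{(-y)^{k}}{(\alpha)_k}\, \Ku(\beta + N, \alpha + k, y)
\end{equation*}
for every positive integer $N$. I will choose $N$ just large enough that $\beta + N > 0$, and then extract the asymptotic from this expansion term by term.

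Before applying the expansion I first strengthen Lemma \ref{l:asymp-Ku-Tri} to the range $\alpha > \beta > 0$, that is, remove the stronger hypothesis $\alpha > \beta + 1$. The integral representation in Lemma \ref{l:Ku-int} is valid throughout this larger range, and after the substitution $u = -yt$ the analysis reduces to controlling $\int_{0}^{-y} e^{-u} u^{\beta - 1}(1 + u/y)^{\alpha - \beta -1}\,du$. In the delicate regime $\alpha - \beta \in (0, 1)$, I would split this integral at $u = -y/2$: on $[0, -y/2]$ the factor $(1 + u/y)$ lies in $[1/2, 1]$, so the integrand is dominated by $2^{1-\alpha+\beta} e^{-u} u^{\beta -1}$ and dominated convergence delivers the limit $\Gamma(\beta)$; on $[-y/2, -y]$ the bound $e^{-u} \leq e^{y/2}$ shows the contribution is exponentially small and hence negligible.

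With this strengthened base case in hand, the dominant contribution to the expansion comes from $k = N$. Its parameters $(\beta + N, \alpha + N)$ satisfy $\alpha + N > \beta + N > 0$ directly from the hypothesis $\alpha > \beta$, so the strengthened lemma yields $\Ku(\beta + N, \alpha + N, y) \sim \frac{\Gamma(\alpha + N)}{\Gamma(\alpha - \beta)}\,(-y)^{-(\beta + N)}$. Combining with the coefficient $(-y)^N/(\alpha)_N = (-y)^N\,\Gamma(\alpha)/\Gamma(\alpha + N)$ telescopes to give the desired leading order $\frac{\Gamma(\alpha)}{\Gamma(\alpha - \beta)}\,(-y)^{-\beta}$.

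The hard part will be verifying that the remaining terms ($k < N$) contribute genuine $o((-y)^{-\beta})$. For indices $k$ with $\alpha + k > \beta + N$ this is immediate: the strengthened lemma gives $\Ku(\beta + N, \alpha + k, y) = O((-y)^{-(\beta + N)})$, so the $k$-th contribution is $O((-y)^{k - \beta - N}) = o((-y)^{-\beta})$ whenever $k < N$. The truly troublesome case is $\alpha + k \leq \beta + N$, which arises for small $k$ when $\alpha - \beta \leq 1$ and $\beta$ is sufficiently negative. There I plan to use the Kummer-type identity from the proof of Proposition \ref{p:generic-Wronskian} to rewrite $\Ku(\beta+N, \alpha+k, y)$ as a linear combination of $\Tri(\beta+N, \alpha+k, y)$ (which decays exponentially by Lemma \ref{l:asymp-Ku-Tri}(3)) and a $\Ku$ with shifted parameters $(1 + \beta + N - \alpha - k,\, 2 - \alpha - k)$, which does fall in the extended range above and therefore yields the required polynomial bound. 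The second asymptotic, as $y \to +\infty$, follows analogously using the Kummer transformation $\Ku(\beta, \alpha, y) = e^y\, \Ku(\alpha - \beta, \alpha, -y)$ to reduce it to the $y \to -\infty$ statement just established.
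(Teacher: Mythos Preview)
Your strengthening of the base case to the full range $\alpha>\beta>0$ by splitting the integral at $-y/2$ is correct and is a genuine alternative to what the paper does: the paper keeps the base case at $\alpha-\beta>1$, $\beta>1$ and uses the $\alpha$-recurrence \eqref{e:alpha-rec} to enlarge $\alpha-\beta$, then the $\beta$-recurrence \eqref{e:beta-rec} to push $\beta$ down. Your iterated expansion via \eqref{e:beta-rec} alone is also correct, and the $k=N$ term indeed reproduces the leading asymptotic after the telescoping you describe.

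The place where your route costs you is the ``troublesome'' terms. These arise precisely because you avoid \eqref{e:alpha-rec}: iterating only \eqref{e:beta-rec} produces terms $\Ku(\beta+N,\alpha+k,y)$ with $\alpha+k\le\beta+N$, i.e.\ parameters outside the regime $\alpha'>\beta'$. Your plan to handle these via the $\Ku$--$\Tri$ connection formula works in principle, but it forces extra constraints: you need $\beta+N\in(0,1)$ (so $\beta\notin\dZ$) for the shifted $\Ku$ to land in your strengthened range, and you need $\alpha+k$ non-integral for the coefficients $\Gamma(1-\alpha-k)$ and the second parameter $2-\alpha-k$ to make sense. These edge cases can be treated by continuity or separate polynomial arguments, but they are avoidable. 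The paper's two-recurrence approach never leaves the regime $\alpha'>\beta'$: whenever a $\beta$-step threatens to make $\alpha'-\beta'\le 0$, one applies \eqref{e:alpha-rec} first to bump $\alpha'$ up, so every intermediate term stays in the already-established range. That is the cleaner route; your argument is valid but does more work than necessary. The $y\to+\infty$ reduction via Kummer's transformation is the same in both approaches.
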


\begin{proof}
We start with the initial step by assuming $\alpha-\beta> 1$ 
and $\beta>1$. Then Lemma \ref{l:asymp-Ku-Tri} in this case shows that the desired asymptotics hold in this case. 

Applying the recurrence formula \eqref{e:alpha-rec}, we can extend the domain of indices to $\alpha-\beta>0$ and $\beta>1$. Then applying \eqref{e:beta-rec}, one can obtain the desired asymptotics for all $\beta\in\dR$. The proof is done.
\end{proof}

\begin{lemma}
[Kummer's transformation law] \label{l:kummer-transformation} Let $\fa \in \dR\setminus\{0,-1,-2,-3,\ldots\}$ and $\fb\in\dR$, then  
for any $y\in\dR$, 
\begin{equation}\Ku(\fb,\fa,y)=e^y \cdot \Ku(\fa-\fb,\fa,-y).
\end{equation}
\end{lemma}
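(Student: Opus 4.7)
The plan is to use the integral representation of Kummer's function established in Lemma \ref{l:Ku-int}. First I would restrict to parameters satisfying $\alpha>\beta>0$, so that the formula
\begin{equation}
\Ku(\fb,\fa,y) = \frac{\Gamma(\fa)}{\Gamma(\fb)\Gamma(\fa-\fb)}\int_0^1 e^{yt}\, t^{\fb-1}(1-t)^{\fa-\fb-1}\,dt
\end{equation}
is applicable. The key step is the substitution $s = 1-t$, which preserves the interval $[0,1]$ and converts the integrand into $e^{y(1-s)} (1-s)^{\fb-1} s^{\fa-\fb-1}$. Pulling the factor $e^y$ out of the integral and relabelling, one recognises the remaining integral as precisely the integral representation of $\Ku(\fa-\fb,\fa,-y)$ (which is valid since the new parameters satisfy $\fa > \fa-\fb > 0$, noting $\fa - (\fa-\fb) = \fb > 0$). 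This immediately yields the identity $\Ku(\fb,\fa,y) = e^{y}\cdot \Ku(\fa-\fb,\fa,-y)$ in the restricted range of parameters.

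To promote this to the full range claimed in the statement, I would then invoke an analytic continuation / identity-theorem argument. Both sides, expanded as formal power series in $y$, have coefficients that are rational functions of $\fb$ and $\fa$ whose only possible poles occur at $\fa \in \{0,-1,-2,\ldots\}$. Since the identity holds on the open set $\{(\fb,\fa)\colon \fa>\fb>0\}$, the coefficient-wise equality of these rational functions follows on the entire domain where $\Ku$ is defined, and in particular for all $\fb \in \dR$ and $\fa \in \dR\setminus\{0,-1,-2,\ldots\}$.

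The change of variables itself is essentially a one-line computation, so the only part that requires any care is confirming that the integral representation of Lemma \ref{l:Ku-int} truly applies to the \emph{transformed} parameters $(\fa-\fb,\fa)$ under our hypothesis $\fa>\fb>0$, and articulating the extension to general parameters. As an alternative approach if the integral representation is considered unsatisfactory for the full parameter range, one could instead verify directly that $f(y)\equiv e^{y}\Ku(\fa-\fb,\fa,-y)$ satisfies the confluent hypergeometric equation \eqref{e:c-h-e} (a short symbolic substitution using the ODE for $\Ku(\fa-\fb,\fa,\cdot)$ and the Leibniz rule), observe that $f(0)=1=\Ku(\fb,\fa,0)$ and that $f$ is analytic at $y=0$, and invoke the uniqueness of analytic solutions to \eqref{e:c-h-e} at the regular singular point $y=0$ with prescribed value (available precisely because $\fa \notin \{0,-1,-2,\ldots\}$ so the second indicial root $1-\fa$ does not coincide with $0$ in a way that produces an analytic second solution). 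I expect no serious obstacle in either route; the integral computation is the cleanest and most direct given the tools already developed in this appendix.
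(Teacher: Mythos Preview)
Your proposal is correct and follows essentially the same route as the paper's proof: restrict to $\alpha>\beta>0$, apply the integral representation from Lemma~\ref{l:Ku-int}, perform the substitution $s=1-t$, and then extend to general parameters by analytic continuation. The only cosmetic difference is that the paper starts from $e^y\Ku(\alpha-\beta,\alpha,-y)$ and works toward $\Ku(\beta,\alpha,y)$, whereas you go in the reverse direction.
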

\begin{proof}

First, we temporarily  assume $\fa>\fb>0$. By Lemma \ref{l:Ku-int}, 
\begin{eqnarray}
e^y\cdot \Ku(\fa-\fb,\fa,-y)
&=& \frac{\Gamma(\fa)}{\Gamma(\fa-\fb)\Gamma(\fb)}
\int_0^1 e^{y(1-t)} t^{\fa-\fb-1}(1-t)^{\fb-1}dt
\nonumber\\
&=& \frac{\Gamma(\fa)}{\Gamma(\fa-\fb)\Gamma(\fb)}
\int_0^1 e^{ys} (1-s)^{\fa-\fb-1}s^{\fb-1}ds
\nonumber\\
&=& \Ku(\fb,\fa, y).
\end{eqnarray}

Now we prove the general case. Since both 
$\frac{e^y\cdot\Ku(\fa-\fb,\fa,-y)}{\Gamma(\fa)}$ and 
$\frac{\Ku(\fb,\fa,y)}{\Gamma(\fa)}$
are entire functions in $\dC$, so the standard analytic continuation theorem implies that 
$\Ku(\fb,\fa,y)=e^y \cdot \Ku(\fa-\fb,\fa,-y)$
holds for any arbitrary $\beta\in\dR$ and $\alpha\in\dR\setminus\{0,-1,-2,-3,\ldots\}$.
\end{proof}

Next we give another integral representation for Kummer's function $\Ku(\fb,\fa,y)$ in the case $y\leq 0$, which has a crucial role in proving the uniform estimates in Section \ref{s:jk not zero}.

\begin{lemma}\label{l:general-Ku-int} Assume that $\fa>\fb$ and $y\leq 0$, then it holds that
\begin{equation}
\Ku(\fb,\fa,y)=\frac{\Gamma(\fa)}{\Gamma(\fa-\fb)}\cdot e^{y}(-y)^{\frac{1-\fa}{2}}\cdot \int_0^{\infty}e^{-t}\cdot t^{\frac{\fa-1}{2}-\fb}\cdot I_{\fa-1}(2\sqrt{-yt})dt.\end{equation}
\end{lemma}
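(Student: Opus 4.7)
The strategy is to reduce to the positive-argument side via Kummer's transformation law (Lemma \ref{l:kummer-transformation}) and then recognize the Bessel integral as the power series defining $\Ku$. Specifically, applying the transformation $\Ku(\fb,\fa,y)=e^{y}\Ku(\fa-\fb,\fa,-y)$ and setting $x=-y\geq 0$, the claim is equivalent to
\begin{equation}
\Ku(\fa-\fb,\fa,x)=\frac{\Gamma(\fa)}{\Gamma(\fa-\fb)}\cdot x^{\frac{1-\fa}{2}}\int_0^{\infty}e^{-t}\,t^{\frac{\fa-1}{2}-\fb}\,I_{\fa-1}(2\sqrt{xt})\,dt
\end{equation}
for $x\geq 0$, and this is the identity I will verify.

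The key move is to plug in the power series
\begin{equation}
I_{\fa-1}(2\sqrt{xt})=\sum_{k=0}^{\infty}\frac{(xt)^{k+\frac{\fa-1}{2}}}{\Gamma(k+1)\,\Gamma(k+\fa)}
\end{equation}
and interchange summation with the $t$-integral. After multiplying by $x^{\frac{1-\fa}{2}}$, the factor $(xt)^{\frac{\fa-1}{2}}\cdot x^{\frac{1-\fa}{2}}=t^{\frac{\fa-1}{2}}$ combines with $t^{\frac{\fa-1}{2}-\fb}$ to give $t^{k+(\fa-\fb)-1}$, so each $t$-integral is a Gamma function:
\begin{equation}
\int_0^{\infty}e^{-t}\,t^{k+(\fa-\fb)-1}\,dt=\Gamma(k+\fa-\fb).
\end{equation}
Collecting terms, the right-hand side becomes
\begin{equation}
\frac{\Gamma(\fa)}{\Gamma(\fa-\fb)}\sum_{k=0}^{\infty}\frac{\Gamma(k+\fa-\fb)}{\Gamma(k+\fa)}\cdot\frac{x^{k}}{k!}=\sum_{k=0}^{\infty}\frac{(\fa-\fb)_{k}}{(\fa)_{k}}\cdot\frac{x^{k}}{k!},
\end{equation}
which is precisely $\Ku(\fa-\fb,\fa,x)$ by definition. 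This closes the identity.

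The one analytic point to check is the interchange of sum and integral, which amounts to absolute convergence of $\sum_{k}\frac{x^{k+\frac{\fa-1}{2}}\Gamma(k+\fa-\fb)}{k!\,\Gamma(k+\fa)\,\Gamma(k+1)}$ after bringing absolute values inside; this follows from Stirling since $\Gamma(k+\fa-\fb)/[\Gamma(k+\fa)\,k!]$ decays at least like $1/(k!\,k^{\fb})$ for large $k$, so Fubini applies. The hypothesis $\fa>\fb$ is used exactly to ensure integrability of the integrand near $t=0$ (where $I_{\fa-1}(2\sqrt{xt})\sim (xt)^{(\fa-1)/2}/\Gamma(\fa)$ and hence the integrand behaves like $t^{\fa-\fb-1}$), and the exponential $e^{-t}$ dominates the $e^{2\sqrt{xt}}$ growth of $I_{\fa-1}$ at infinity. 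I do not expect any real obstacle; the proof is essentially a term-by-term computation once one has spotted that Kummer's transformation is the correct first step.
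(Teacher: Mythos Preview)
Your proof is correct and follows essentially the same approach as the paper: expand $I_{\fa-1}$ in its power series, integrate term by term to produce Gamma factors, recognize the resulting series as $\Ku(\fa-\fb,\fa,-y)$, and invoke Kummer's transformation law. The only difference is cosmetic---you apply Kummer's transformation at the start to reduce to $x=-y\geq 0$, while the paper computes with $-y$ throughout and applies the transformation as the final step; your added remark on the Fubini justification is a small bonus the paper omits.
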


\begin{proof}
By definition,
\begin{equation}
I_{\fa-1}(2\sqrt{-yt})=\sum\limits_{k=0}^{\infty}\frac{(-yt)^{k+\frac{\fa-1}{2}}}{k!\cdot\Gamma(k+\fa)}.
\end{equation}
Integrating the above expansion, it follows that
\begin{eqnarray}
&& \frac{\Gamma(\fa)}{\Gamma(\fa-\fb)}\cdot\int_0^{\infty}e^{-t}\cdot t^{\frac{\fa-1}{2}-\fb}\cdot I_{\fa-1}(2\sqrt{-yt})dt
\nonumber\\
 &=&  \frac{\Gamma(\fa)}{\Gamma(\fa-\fb)}\cdot (-y)^{\frac{\alpha-1}{2}}\cdot\sum\limits_{k=0}^{\infty} \frac{(-y)^k}{k!\cdot\Gamma(k+\fa)}\cdot\int_0^{\infty}e^{-t}\cdot t^{\fa-\fb+k-1}dt
\nonumber\\
&=&   \frac{\Gamma(\fa)}{\Gamma(\fa-\fb)}\cdot (-y)^{\frac{\alpha-1}{2}}\cdot\sum\limits_{k=0}^{\infty}\frac{(-y)^k\cdot \Gamma(\alpha-\beta+k )}{k!\cdot\Gamma(k+\fa)}.
\end{eqnarray}
By the recursive formula of the Gamma function, $\frac{\Gamma(\alpha-\beta+k)}{\Gamma(k+\alpha)} = \frac{(\alpha-\beta)_k \cdot \Gamma(\alpha-\beta)}{(\alpha)_k \cdot \Gamma(\alpha)}$, so it follows that
\begin{eqnarray}
&&\frac{\Gamma(\fa)}{\Gamma(\fa-\fb)}\cdot\int_0^{\infty}e^{-t}\cdot t^{\frac{\fa-1}{2}-\fb}\cdot I_{\fa-1}(2\sqrt{-yt})dt
\nonumber\\
&=& (-y)^{\frac{\alpha-1}{2}}\cdot\sum\limits_{k=0}^{\infty}\frac{(\alpha-\beta)_k (-y)^k}{(\alpha)_k \cdot k!}
\nonumber\\
&=&(-y)^{\frac{\alpha-1}{2}}\cdot\Ku(\fa-\fb,\fa,-y). \end{eqnarray}
Therefore,
\begin{eqnarray}
&&\frac{\Gamma(\fa)}{\Gamma(\fa-\fb)}\cdot e^{y}(-y)^{\frac{1-\fa}{2}}\cdot \int_0^{\infty}e^{-t}\cdot t^{\frac{\fa-1}{2}-\fb}\cdot I_{\fa-1}(2\sqrt{-yt})dt
\nonumber\\
&=& e^y \cdot \Ku(\fa-\fb,\fa,-y)
\nonumber\\
&=&\Ku(\fb,\fa,y).
\end{eqnarray}
The last equality follows from Kummer's transformation law.
\end{proof}

\begin{lemma}\label{l:B-C}
Let $\nu>0$, then for all $y>0$
\begin{align}
I_{\nu}(y) &= \frac{(\frac{y}{2})^{\nu}e^{-y}}{\Gamma(\nu+1)}\Ku(\nu+\frac{1}{2},2\nu+1,2y),\label{e:I-Ku}
\\
K_{\nu}(y) &= \sqrt{\pi}(2y)^{\nu}e^{-y}\mathcal{U}(\nu+\frac{1}{2},2\nu+1,2y).\label{e:K-Tri}
\end{align}

\end{lemma}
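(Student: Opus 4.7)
The plan is to establish the two identities in sequence, using \eqref{e:I-Ku} as input for \eqref{e:K-Tri}.

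For \eqref{e:I-Ku}, I would argue via uniqueness of solutions to the modified Bessel equation \eqref{e:m-b-eq} with prescribed behavior at the origin. The steps are: (i) set $w(y) \equiv \Ku(\nu+\tfrac{1}{2}, 2\nu+1, 2y)$ and observe, by the chain rule applied to the confluent hypergeometric equation \eqref{e:c-h-e} under $u \mapsto 2y$, that $w$ satisfies
\begin{equation}
y\, w''(y) + (2\nu+1-2y)\,w'(y) - (2\nu+1)\,w(y) = 0;
\end{equation}
(ii) substitute $f(y) \equiv (y/2)^{\nu} e^{-y} w(y)$ into $y^{2} f'' + y f' - (y^{2}+\nu^{2}) f$ and verify that this expression collapses to $y\,(y/2)^{\nu} e^{-y}$ times the left-hand side of the preceding ODE, hence vanishes, so $f$ solves \eqref{e:m-b-eq}; (iii) since $\Ku(\nu+\tfrac{1}{2}, 2\nu+1, 0) = 1$, the leading behavior of $f$ at the origin is $(y/2)^{\nu}$, matching that of $\Gamma(\nu+1)\, I_{\nu}(y)$. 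For $\nu > 0$, the other independent solution of \eqref{e:m-b-eq} behaves like $(y/2)^{-\nu}$ and is excluded, so uniqueness gives \eqref{e:I-Ku}.

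To deduce \eqref{e:K-Tri}, I plan to substitute $\alpha = 2\nu+1$, $\beta = \nu+\tfrac{1}{2}$ into the Kummer--Tricomi decomposition \eqref{e:Ku-Tri} to expand $\mathcal{U}(\nu+\tfrac{1}{2}, 2\nu+1, 2y)$, then apply \eqref{e:I-Ku} both at $\nu$ and at $-\nu$ to rewrite
\begin{align}
\Ku(\nu+\tfrac{1}{2}, 2\nu+1, 2y) &= \Gamma(\nu+1)\, (y/2)^{-\nu} e^{y}\, I_{\nu}(y),\\
\Ku(\tfrac{1}{2}-\nu, 1-2\nu, 2y) &= \Gamma(1-\nu)\,(y/2)^{\nu} e^{y}\, I_{-\nu}(y).
\end{align}
The latter requires $\nu$ to avoid half-integers and positive integers so that $1-2\nu$ is a permissible second index; the remaining $\nu$ are handled by continuity, consistent with the definition \eqref{e:def-K-function}. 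Multiplying through by $\sqrt{\pi}(2y)^{\nu} e^{-y}$, the identity \eqref{e:K-Tri} reduces to checking
\begin{align}
\sqrt{\pi}\,2^{2\nu}\,\frac{\Gamma(-2\nu)\,\Gamma(\nu+1)}{\Gamma(\tfrac{1}{2}-\nu)} &= -\frac{\pi}{2\sin(\nu\pi)},\\
\sqrt{\pi}\,2^{-2\nu}\,\frac{\Gamma(2\nu)\,\Gamma(1-\nu)}{\Gamma(\nu+\tfrac{1}{2})} &= \frac{\pi}{2\sin(\nu\pi)},
\end{align}
after which the two contributions combine into $\pi\bigl(I_{-\nu}(y)-I_{\nu}(y)\bigr)/(2\sin\nu\pi) = K_{\nu}(y)$ as in \eqref{e:def-K-function}. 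Both equalities should follow by combining Legendre's duplication formula applied to $\Gamma(\pm 2\nu)$ with Euler's reflection formula $\Gamma(z)\Gamma(1-z) = \pi/\sin(\pi z)$ (used at $z = 1+\nu$ and at $z = \nu$ respectively).

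The hard part will be the second step: the bookkeeping that simplifies the two gamma-function coefficients to $\pm\pi/(2\sin\nu\pi)$ requires careful coordination of the duplication and reflection identities, and a continuity argument to cover exceptional values of $\nu$. The ODE-uniqueness proof of \eqref{e:I-Ku} is, by contrast, routine once the transformed equation for $w(y)$ is in hand.
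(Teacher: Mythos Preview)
Your proposal is correct and, for \eqref{e:K-Tri}, follows exactly the paper's route: expand $\mathcal{U}$ via \eqref{e:Ku-Tri}, apply the $I_{\pm\nu}$ identity, reduce to gamma identities, and close by continuity at the exceptional $\nu$; you simply supply the gamma computations that the paper leaves implicit.

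The one difference is in \eqref{e:I-Ku}. The paper verifies it by direct comparison of the power series defining $I_\nu$ and $\Ku(\nu+\tfrac12,2\nu+1,2y)$, whereas you argue by ODE uniqueness: both sides solve \eqref{e:m-b-eq} and share the $(y/2)^\nu$ behavior at the origin. Both arguments are short and elementary; yours avoids manipulating the series coefficients (which requires the duplication formula to match $(\nu+\tfrac12)_k/(2\nu+1)_k$ against $1/\Gamma(k+\nu+1)$), at the cost of a small calculus check that $f(y)=(y/2)^\nu e^{-y}w(y)$ satisfies the Bessel equation. Either route is perfectly adequate here.
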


\begin{proof}
The relation \eqref{e:I-Ku} can be verified by the power series definition of $I_{\nu}$ and $\Ku(\nu+\frac{1}{2},2\nu+1,2y)$, so we just omit the computations.

To prove \eqref{e:K-Tri}, first we assume $\nu$ is not an integer. Combining the definition 
\begin{equation}
K_{\nu}(y) = \frac{\pi}{\sin(\nu\pi)}\cdot\frac{I_{-\nu}(y) - I_{\nu}(y)}{2}
\end{equation}
and the relation 
\begin{equation}
\mathcal{U}(\nu+\frac{1}{2},2\nu+1,y) = \frac{\Gamma(-2\nu)}{\Gamma(\frac{1}{2}-\nu)}\cdot\Ku(\nu+\frac{1}{2},2\nu+1,y) + 
\frac{\Gamma(2\nu)}{\Gamma(\nu+\frac{1}{2})}\cdot y^{-2\nu}\cdot \Ku(\frac{1}{2}-\nu,1-2\nu, y),
\end{equation}
which is given by \eqref{e:Ku-Tri}.
If $\nu$ is an integer, the relation \eqref{e:K-Tri} can be obtained by the limiting definition of $K_{\nu}$ and the continuity argument for $\nu$. 
\end{proof}

The following corollary shows the asymptotic behavior 
of $I_{\nu}(y)$ and $K_{\nu}(y)$ as $y\to+\infty$.
\begin{corollary}\label{c:bessel-asymp} Let $\nu>0$, then we have
\begin{equation}
\lim\limits_{y\to+\infty}\frac{I_{\nu}(y)}{\frac{e^y}{\sqrt{2\pi y}}}=1\end{equation}
and
\begin{equation}\lim\limits_{y\to+\infty}\frac{K_{\nu}(y)}{\sqrt{\frac{\pi}{2y}}\cdot e^{-y}}=1.
\end{equation}
\end{corollary}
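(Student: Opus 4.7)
The plan is to deduce both asymptotics directly from Lemma \ref{l:B-C}, which expresses $I_\nu$ and $K_\nu$ in terms of the confluent hypergeometric functions $\Ku$ and $\mathcal{U}$, combined with the asymptotic formulas for those functions already established in Lemma \ref{l:asymp-Ku-Tri}. Since everything is in place, the argument is essentially a computation, so I will just outline how the constants fit together.

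For $K_\nu(y)$, I will start from the identity
\begin{equation}
K_{\nu}(y)=\sqrt{\pi}(2y)^{\nu}e^{-y}\mathcal{U}(\nu+\tfrac{1}{2},2\nu+1,2y)
\end{equation}
in Lemma \ref{l:B-C}. Since $\nu>0$, we have $\beta=\nu+\tfrac12>0$, so Item (2) of Lemma \ref{l:asymp-Ku-Tri} yields $\mathcal{U}(\nu+\tfrac12,2\nu+1,2y)\sim (2y)^{-\nu-1/2}$ as $y\to+\infty$. Substituting and simplifying gives $K_\nu(y)\sim \sqrt{\pi}\,(2y)^{-1/2}e^{-y}=\sqrt{\pi/(2y)}\,e^{-y}$, which is the claim.

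For $I_\nu(y)$, I will use the companion identity
\begin{equation}
I_{\nu}(y)=\frac{(y/2)^{\nu}e^{-y}}{\Gamma(\nu+1)}\Ku(\nu+\tfrac{1}{2},2\nu+1,2y),
\end{equation}
again from Lemma \ref{l:B-C}. Since $\alpha-\beta=(2\nu+1)-(\nu+\tfrac12)=\nu+\tfrac12>1$ precisely when $\nu>\tfrac12$, I will first apply Item (1) of Lemma \ref{l:asymp-Ku-Tri} in that range (for general $\nu>0$ I would instead invoke Lemma \ref{l:general-Ku-asymp}, which extends the $y\to+\infty$ asymptotic to all relevant indices). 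This gives
\begin{equation}
\Ku(\nu+\tfrac12,2\nu+1,2y)\sim \frac{\Gamma(2\nu+1)}{\Gamma(\nu+\tfrac12)}\,e^{2y}(2y)^{-\nu-1/2}.
\end{equation}
Plugging back and collecting powers of $y$ yields
\begin{equation}
I_\nu(y)\sim \frac{\Gamma(2\nu+1)}{\Gamma(\nu+1)\Gamma(\nu+\tfrac12)}\cdot 2^{-2\nu-1/2}\cdot \frac{e^{y}}{\sqrt{y}}.
\end{equation}
The Legendre duplication formula $\Gamma(2\nu+1)=\tfrac{2^{2\nu}}{\sqrt{\pi}}\Gamma(\nu+1)\Gamma(\nu+\tfrac12)$ then collapses the constant to $1/\sqrt{2\pi}$, giving $I_\nu(y)\sim e^y/\sqrt{2\pi y}$.

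There is no genuine obstacle: everything reduces to unwinding Lemma \ref{l:B-C} and applying the previously established asymptotics of $\Ku$ and $\mathcal{U}$. The only bookkeeping step worth care is invoking the duplication formula to recognise that the combinatorial constant coming out of the $\Ku$ asymptotic simplifies to $1/\sqrt{2\pi}$; this is what makes the final normalisation match the classical form of the asymptotic.
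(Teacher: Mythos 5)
Your proposal is correct and follows precisely the route the paper intends: the paper's own proof is a one-line citation of Lemmas \ref{l:asymp-Ku-Tri}, \ref{l:general-Ku-asymp}, and \ref{l:B-C}, and you have simply carried out that computation (including the correct invocation of Lemma \ref{l:general-Ku-asymp} when $0<\nu\leq\tfrac12$, and the use of the Legendre duplication formula to recover the normalising constant $1/\sqrt{2\pi}$).
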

\begin{proof}
The proof follows from Lemma \ref{l:asymp-Ku-Tri}, Lemma \ref{l:general-Ku-asymp} and Lemma \ref{l:B-C}.
\end{proof}

\bibliographystyle{amsalpha} 
\bibliography{References_Liouville}

\providecommand{\bysame}{\leavevmode\hbox to3em{\hrulefill}\thinspace}
\providecommand{\MR}{\relax\ifhmode\unskip\space\fi MR }
\providecommand{\MRhref}[2]{%
  \href{http://www.ams.org/mathscinet-getitem?mr=#1}{#2}
}
\providecommand{\href}[2]{#2}
\begin{thebibliography}{HSVZ18}

\bibitem[Car16]{Carron}
Gilles Carron, \emph{Harmonic functions on manifolds whose large spheres are
  small}, Ann. Math. Blaise Pascal \textbf{23} (2016), no.~2, 249--261.

\bibitem[HSVZ18]{HSVZ}
Hans-Joachim {Hein}, Song {Sun}, Jeff {Viaclovsky}, and Ruobing {Zhang},
  \emph{Nilpotent structures and collapsing {R}icci-flat metrics on {K}3
  surfaces}, arXiv:1807.09367, 2018.

\bibitem[KK10]{Koehler-Kuehnel}
Bert Koehler and Marco K\"{u}hnel, \emph{On asymptotics of complete
  {R}icci-flat {K}\"{a}hler metrics on open manifolds}, Manuscripta Math.
  \textbf{132} (2010), no.~3-4, 431--462.

\bibitem[Leb72]{Lebedev}
N.~N. Lebedev, \emph{Special functions and their applications}, Dover
  Publications, Inc., New York, 1972, Revised edition, translated from the
  Russian and edited by Richard A. Silverman, Unabridged and corrected
  republication.

\bibitem[LM85]{LM}
Robert~B. Lockhart and Robert~C. McOwen, \emph{Elliptic differential operators
  on noncompact manifolds}, Ann. Scuola Norm. Sup. Pisa Cl. Sci. (4)
  \textbf{12} (1985), no.~3, 409--447.

\bibitem[SZ19]{SZ}
Song {Sun} and Ruobing {Zhang}, \emph{Complex structure degenerations and
  collapsing of {C}alabi-{Y}au metrics}, arXiv.org:1906.03368v2, 2019.

\bibitem[TY90]{TY}
Gang Tian and Shing-Tung Yau, \emph{Complete {K}\"ahler manifolds with zero
  {R}icci curvature. {I}}, J. Amer. Math. Soc. \textbf{3} (1990), no.~3,
  579--609.

\end{thebibliography}

\end{document}